\setlist[enumerate]{label={\upshape(\roman*)}}
\newtheorem{theorem:intro}{Theorem}
\newtheorem{proposition}{Proposition}[section]
\newtheorem{corollary}[proposition]{Corollary}
\newtheorem{lemma}[proposition]{Lemma}
\newtheorem{theorem}[proposition]{Theorem}
\theoremstyle{definition}
\newtheorem{definition}[proposition]{Definition}
\newtheorem{example}[proposition]{Example}
\newtheorem*{remark}{Remark}
\newtheorem*{notation}{Notation}
\newcommand{\ZZ}{{\mathbb Z}}
\newcommand{\NN}{{\mathbb N}}
\newcommand{\im}{\mathrm{im}}
\newcommand{\Aut}{{\mathsf{Aut}}}
\newcommand{\Sym}{{\mathsf{Sym}}}
\newcommand{\Triv}{{\mathsf{Triv}}}
\newcommand{\St}{{\mathsf{St}}}
\newcommand{\rst}{{\mathsf{Rst}}}
\newcommand{\aug}{\mathrm{aug}}
\newcommand{\Aug}{\mathrm{Aug}}
\newcommand{\qqand}{{\qquad\text{and}\qquad}}
\newcommand{\cl}[1]{{\overline{#1}}}
\begin{document}

\title{Branch groups with infinite rigid kernel}


\author{Alejandra Garrido}
\address{Facultad de Matem\'aticas, Universidad Complutense de Madrid, and ICMAT, Madrid, SPAIN}
\email{alejandra.garrido@ucm.es; alejandra.garrido@icmat.es}

\thanks{The first author received financial support from Spain’s Ministry of Science and Innovation [PID2020-114032GB-I00]; and the Severo Ochoa Programme for Centres of Excellence in Research and Development [CEX2019-000904-S]}

\author{Zoran \v{S}uni\'c}
\address{Department of Mathematics, Hofstra University, Hempstead NY 11549, USA, and 
UKIM, Skopje, North Macedonia}
\email{zoran.sunic@hofstra.edu}

\thanks{Both authors are grateful to the anonymous referee for carefully reading a previous version of this article, suggesting many improvements and correcting mistakes.}

\dedicatory{To Slava Grigorchuk, in gratitude and acknowledgement of the many ramifications of his work.}

\begin{abstract}
A theoretical framework is established for explicitly calculating rigid kernels of self-similar regular branch groups. 
This is applied to a new infinite family of branch groups in order to provide the first examples of self-similar, branch groups with infinite rigid kernel.
The groups are analogues of the Hanoi Towers group on 3 pegs, based on the standard actions of finite dihedral groups on regular polygons with odd numbers of vertices, and the rigid kernel is an infinite Cartesian power of the cyclic group of order 2, except for the original Hanoi group. 
The proofs rely on a symbolic-dynamical approach, related to finitely constrained groups.
\end{abstract}

\keywords{branch group, rigid kernel, congruence subgroup problem, profinite group, finitely constrained group, self-similar group, symbolic dynamics on trees, tree shift}
\subjclass[2010]{20E05,20E18,22C05}

\maketitle

	
%

\section{Introduction}
A group $G$ with a faithful action on a level-homogeneous rooted tree $T$ is a \emph{branch group} if it acts level-transitively and for all $n$, the rigid stabiliser $\rst_G(n)=\prod_{v\in \text{level } n}\rst_G(v)$ has finite index in $G$, where $\rst_G(v)$ consists of all elements of $G$ that are only supported on the subtree rooted at $v$.

Since the appearance in the 1980s of the first finitely generated examples, branch groups have been recognized as an important class of groups. 
This is not only because of the many examples in the class with interesting properties:  finitely generated infinite torsion groups, groups of intermediate word growth,  of non-uniformly exponential word growth, amenable but not elementary amenable, etc..(\cite{grig_burnside}, \cite{grig_intermediate}, \cite{Wilson_nonuniform}), but also because the subgroup structure of branch groups forces them to appear as cases in classifications. 
For instance, by a result of Wilson (\cite{wilsonJIclassification}, see also \cite{GrigNewHorizons}), all residually finite just infinite groups (infinite groups all of whose proper quotients are finite) are either just infinite branch groups, virtually direct powers of non-abelian hereditarily just infinite  groups, or virtually abelian. 
This classification also applies in the case of just infinite profinite groups (all quotients by closed normal subgroups are finite), \cite{wilsonNewhorizons}. 
Profinite branch groups also arise as compact-open subgroups of \emph{locally decomposable} groups, one of five types into which one can separate the groups that are totally disconnected, locally compact, compactly generated, topologically simple and non-discrete (\cite{CRW-Part2}).

If $G$ is a branch group, it must be residually finite, so it embeds densely into its profinite completion $\widehat{G}$. 
The action of $G$ on the rooted tree $T$ gives another completion, with respect to the topology generated by the stabilisers $\St_G(n)$ of levels of the tree; this completion, denoted $\cl{G}$, is in fact the closure of $G$ in the permutation topology of $\Aut(T)$, the (profinite) group of all automorphisms of $T$. 
The rigid level stabilisers, $\rst_G(n), n\in \NN$, being of finite index, also yield a profinite topology on $G$, and $G$ also embeds densely into the completion $\widetilde{G}$ with respect to this topology. 
Since $\rst_G(n)\leq \St_G(n)$ for every $n\in \NN$, and they are all of finite index in $G$, there are surjective morphisms
$$\widehat{G}\rightarrow \widetilde{G} \rightarrow \cl{G}$$
and the \emph{congruence subgroup problem} asks what are the kernels:
the \emph{congruence kernel} $\ker(\widehat{G}\rightarrow \cl{G})$, the \emph{branch kernel} $\ker(\widehat{G}\rightarrow \widetilde{G})$ and the 
\emph{rigid kernel} $\ker(\widetilde{G}\rightarrow \cl{G})$. 

The term ``congruence'' is used by analogy with the classical congruence subgroup problem for $\mathrm{SL}_n(\mathbb{Z})$,  from which these questions take inspiration.
The problem of determining the congruence, branch and rigid kernels for a branch group $G$ was first posed in \cite{bartholdi-s-z:congruence}, where the first systematic study of this problem was undertaken. 
It was determined in \cite{garridoCSP} that the congruence subgroup problem is independent of the branch action of $G$. 

There are now various examples of branch groups in the literature that show different behaviours for the various kernels. 
For example, Grigorchuk groups and GGS-groups all have trivial congruence kernel (we say that they have the \emph{congruence subgroup property}), \cite{GrigNewHorizons, agu_ggs}. 
Pervova constructed in  \cite{pervova:completions} the first examples of branch groups with non-trivial (in fact, infinite) congruence kernel. 
The rigid kernel of these groups is trivial. 
The first example of a branch group shown to have a non-trivial rigid kernel is $H^{(3)}$, the Hanoi Towers group on 3 pegs \cite{grigorchuk-s:hanoi-cr}. 
It was determined in \cite{bartholdi-s-z:congruence} and, later, using a different approach, in \cite{skipper:congruence}, that the rigid kernel is the Klein 4-group. 
An infinite family of branch groups with non-trivial rigid kernel is constructed by Skipper in \cite{skipper:congruence}, but the rigid kernel for these examples is not determined and the groups come without self-similar actions (they are constructed as subgroups of some self-similar groups). 

In this paper, we provide the first known examples of self-similar, branch groups with infinite rigid kernel. 
They are an infinite family of ``dihedral generalizations" of the Hanoi Towers group $H^{(3)}$, one for each odd $d\geq 3$, acting on the $d$-regular rooted tree. 
The rigid kernel is determined explicitly, it is isomorphic to a Cartesian product of cyclic groups of order 2. 
The product is infinite for $d\geq 5$ and, of course, of rank 2 if $d=3$.

\subsection*{Main examples: Hanoihedral groups}
Let $d =2k+1$ be an odd integer, with $d \geq 3$ ($k \geq 1$), and $X=\{0,1,\dots,d-1\}$. For $i = 0,\dots,d-1$, let $\mu_i$ be the involution in $\Sym(X)$ given by 
	\[
	\mu_i = (i-1~i+1)(i-2~i+2) \dots (i-k~i+k),
	\]
where the entries in the inversion pairs are considered modulo $d$ (for instance, for $d=5$, $\mu_4 = (35)(26) = (03)(12)$). Note that $\mu_i$ can be interpreted as the mirror symmetry of the regular $d$-gon with vertices $0,1,\dots,d-1$ with respect to the axis through vertex $i$. The group generated by all $\mu_i$, $i=0,\dots,d-1$, is the dihedral group $D(d)$, the group of symmetries of the regular $d$-gon. For $i,j=0,\dots,d-1$, we have $\mu_i\mu_j = \rho^{2(i-j)}$, where $\rho=(012\dots d-1)$ is the rotation of the $d$-gon by $2\pi/d$. We also have $\mu_i\rho\mu_i=\rho^{-1}$ and $\mu_i\rho = \rho \mu_{i-1}$ (indices modulo $d$). The dihedral group $D(d)$ consists of $d$ mirror symmetries, $\mu_0,\dots,\mu_{d-1}$, and  $d$ rotations, $1,\rho,\rho^2, \dots,\rho^{d-1}$. The rotations form a subgroup of index 2 in $D(d)$, which is also the commutator subgroup of $D(d)$.

For $i=0,\dots,d-1$, let $a_i$ be the automorphism of the $d$-ary rooted tree $X^*$ given by 
	\[
	a_i = \mu_i(1,\dots,1,a_i,1,\dots,1), 
	\]
where the only non-trivial section appears in coordinate $i$ (the only coordinate not moved by $\mu_i$). Let $D$ be the self-similar group
	\[
	D = \langle a_0,\dots,a_{d-1} \rangle. 
	\]
Note that we defined one group for every odd integer $d \geq 3$, but this is not reflected in our notation for $D$ or its generators (we do not index them by $d$) lest the notation become cumbersome. 
	
The definition of the generators of $D$ fits the general construction of generators for ``Hanoi-like groups'' from a set of permutations, mentioned in~\cite[Example 4]{grigorchuk-s:standrews}.
Namely, the Hanoi Towers group $H^{(d)}$, $ d \geq 3$, is generated by the automorphisms corresponding to all transpositions $(ij)$, the group $D$ is generated by the automorphisms corresponding to all mirror permutations $\mu_i$, while the examples in~\cite{skipper:congruence} by the automorphisms corresponding to all $d$ cycles of length $d-1$ obtained from the cycle $(0 1 \dots d-1)$ by removing one letter. More generally, all of these examples may be seen as variations inspired by the groups constructed by P.~Neumann~\cite[Section 5]{neumann:pride}. However, the end result is very different, as P.~Neumann's examples are just-infinite groups with very simple branching structure and trivial congruence kernel, while the group $D$ has a non-trivial rigid  kernel.  

The following hold for these examples.
Here, $X\ast D'$ denotes $\prod_{v\in X}\delta_x(D')\leq \Aut(X^*)$ where $\delta_x(g)$ is the element of $\Aut(X^*)$ that acts as $g\in \Aut(X^*)$ on the subtree rooted at $x\in X$ and fixes the rest of $X^*$. 
 See Section \ref{sec:2} for more definitions.

\begin{theorem:intro}\label{t:list}
Let $d$ be an odd integer, with $d \geq 3$. Then:
	
\begin{enumerate}
	\item $D$ is a self-similar, self-replicating, contracting, regular branch group, branching over its commutator subgroup $D'$. 
		
		
\item $D/\St_D(1) = D(d)$. 
		
		
\item The group $D/D'$ is the elementary abelian 2-group of rank $d$ and 
    \[
	|D':X\ast D'| = d \cdot 2^{(d-1)(d-2)}. 
    \]

	\item The rigid level stabiliser $R_n$ of level $n$ is $X^{n}\ast D'$.
    For $n \geq 1$, we have 
	\[
	|D:R_n| = 2^{(d-2)d^n+2} \cdot d^{\frac{d^{n}-1}{d-1}}. 
	\]
		
		
	\item 	The rigid kernel of $D$ is an elementary abelian 2-group
    \[
    A\times \prod_{X^*\setminus\epsilon}B, 
    \]
	where $A$ has rank $(d-1)(d-2)$ 
	and $B$ has rank $(d-1)(d-3)$. 
	In particular, the rigid kernel is a Klein-4 group when $d=3$ and isomorphic to $\prod_{\NN}\ZZ/2\ZZ$ when $d>3$.

	\item The closure $\cl{D}$ is a finitely constrained group defined by the  patterns of size $2$ that can be described as follows. A pattern of size 2 
	\[
	\xymatrix{
		&& \pi 
		\ar@{->}_{0}[lld] \ar@{->}^{1}[ld] \ar@{->}_{d-2}[rd] \ar@{->}^{d-1}[rrd]&& \\
		\pi_0 & \pi_1 & \dots & \pi_{d-2} & \pi_{d-1} 
	}
	\]
	is an allowed pattern if and only if the permutation $\pi\pi_0\pi_1\dots \pi_{d-2}\pi_{d-1}$ is a rotation in $D(d)$ (that is, the number of mirror symmetries among $\pi,\pi_0,\dots,\pi_{d-1}$ is even).

	\item The closure $\cl{D}$ is a regular branch group branching over $\cl{D}_1$, the stabiliser of level 1. We have, for $n \geq 1$,  
	\[
	|D:\St_D(n)| = |\cl{D}:\cl{\St_D}(n)| = 2^{d^{n-1}} \cdot d^{\frac{d^{n}-1}{d-1}}.
	\]
	and the Hausdorff dimension of $\cl{D}$ is 
	\[
	1- \frac{1}{d} \cdot \frac{\log 2}{\log 2d}. 
	\]
		

\end{enumerate}
\end{theorem:intro}

Notice that since the rigid kernel is elementary abelian, the branch completion $\widetilde{D}$ of $D$ is not a branch group, because these do not contain non-trivial virtually abelian normal subgroups. 
In particular, the rigid kernel will be in the kernel of any branch action of $\widetilde{D}$. 

%

In order to make our calculations, we also develop a theoretical framework for finding the structure of rigid kernels.
As for most of the studied examples of branch groups, we concentrate on the case of self-similar groups (see Section \ref{sec:2} for definitions) because in that case we can exploit the symbolic-dynamical results obtained in \cite{penland-sunic:kitchens} and \cite{sunic:pibonacci} for \emph{finitely constrained groups}. 
In particular, we obtain the following criterion for determining whether or not the rigid kernel is trivial. 

\begin{theorem:intro}\label{t:criterion}
	Let $G \leq \Aut(X^*)$ be a self-similar, level-transitive, regular branch group, with maximal branching subgroup $K$. 
	The following are equivalent:
	\begin{enumerate}
		\item $G$ has  trivial rigid kernel;
		\item $G$ branches over some level stabiliser;
		\item $K\geq \St_G(n)$ for some $n\geq 1$.
	\end{enumerate}
	
	If $G$ is in addition self-replicating, then $\cl{G}$ is a  finitely constrained group and the above items are also equivalent to 
	
	\begin{enumerate}
		\setcounter{enumi}{3}
		\item $ |X\ast G:\St_G(1)| =  |X\ast \cl{G}:\cl{\St_G(1)}|$.
	\end{enumerate}
\end{theorem:intro}

 \begin{example}
	It is known that, for $G = H^{(3)}$, the Hanoi Towers group on 3 pegs,   we have
	\[
	|G\times G \times G:G_1| = 2^5 \qqand |\cl{G}\times \cl{G} \times \cl{G}:\cl{\St_G}(1)|=2. 
	\]
	Thus, $H^{(3)}$ has a non-trivial rigid kernel. 
	
	The value of the former index was indicated in~\cite{grigorchuk-s:standrews}, while the value of the latter can be easily inferred from the description of the closure $\cl{H^{(3)}}$ as a finitely constrained group, announced in~\cite{grigorchuk-n-s:oberwolfach2}. These indices were mentioned and used in~\cite{sunic:pibonacci} in the context of the calculation of the Hausdorff dimension of $\cl{H^{(3)}}$ (which happened to be $1-1/3\log_6 2$), along with a remark that relates the fact that these indices are different to the fact that $H^{(3)}$ is not branching over any level stabiliser, which is to say, in the terminology of~\cite{bartholdi-s-z:congruence}, that $H^{(3)}$ has non-trivial rigid kernel. 
	The present text grew out of an attempt to elucidate and formalize that remark. 
\end{example}

Under the assumption that the group is self-similar and regular branch, we obtain various results on the size of the rigid kernel, the most explicit and simple to state of which is as follows.

\begin{theorem:intro}\label{thm:rigid_kernel_surjective_system_direct_prod}
	Let $G$ be a self-similar, level-transitive regular branch group with maximal branching subgroup $K$. 
	Suppose that $\St_G(2)K\geq \St_G(1)$. 
	\begin{enumerate}
		\item The rigid kernel of $G$ is trivial if and only if $\St_G(2)=\Triv_G(2)$. 
		\item Suppose moreover that $G/K$ is in a class of groups that is closed under subgroups, quotients and direct products, in which all short exact sequences that are in the class split as direct products (for example, elementary abelian groups). 
		Then the rigid kernel is  $$\Gamma=\frac{\St_G(2)}{\Triv_G(2)}\times \prod_{X^*\setminus\{\epsilon\}} \frac{\St_G(2)\cap K}{\Triv_G(2)}$$
		and it is infinite if and only if $\dfrac{\St_G(2)\cap K}{\Triv_G(2)}$ is non-trivial.
	\end{enumerate}
\end{theorem:intro}

Section \ref{sec:2} contains definitions and necessary prerequisites and the proofs of the above theorems. 
Section \ref{sec:3} is devoted to the Hanoihedral groups and proving the items in Theorem \ref{t:list}.

\section{Symbolic portraits and branch completions}\label{sec:2}

\subsection{Tree-shifts}
Let $A$ be a set.
Symbolic dynamics has for a long time concerned the study of one-sided sub-shifts: closed subsets of $A^{\mathbb{N}}$ with the product topology (where $A$ is discrete) that are invariant under the shift map $\cdot_{1}:A^{\mathbb{N}}\rightarrow A^{\mathbb{N}}, f_{1}(n)=f(1+n)$. 
This shift map extends to a right action of $\mathbb{N}$ on $A^{\mathbb{N}}$.

Now, $\NN$ is the free monoid on a single generator: the set of all finite strings over the alphabet $\{1\}$ with operation concatenation.
This of course generalises to the free monoid over an alphabet $X$ (which we always take to be finite): the set $X^*$ of all finite strings over $X$.
The right Cayley graph of $X^*$ is an infinite,  regular, rooted tree whose vertices of level $n$ are the words $X^n$ over $X$ of length $n$.

A way of visualising an element of $A^{X^*}$ is as a \emph{tree portrait}: every vertex of the right Cayley graph of $X^*$ is decorated by a value in $A$.

Passing from  $\NN$ to $X^*$ we obtain:

\begin{definition}\label{def:tree-shift-self-sim-repl-over-A}
A subset $S\subseteq A^{X^*}$ is \emph{self-similar over $A$} if it is invariant under the shifts $\cdot_x:A^{X^*}\rightarrow A^{X^*}$, for $x\in X$, defined by $f\mapsto f_x$  where $f_{x}(u)=f(xu)$.
If the shifts are onto $S\subseteq A^{X^*}$, then $S$ is \emph{self-replicating over $A$}.

Extending these maps to all of $X^*$ yields a right action of $X^*$ on  $A^{X^*}$.

A \emph{tree-shift over the alphabet $A$} is a closed subset $S\subseteq A^{X^*} $ that is self-similar over $A$.
\end{definition}

Tree-shifts have been studied in  \cite{cech-c-f-s:cellautomata} and \cite{penland-sunic:kitchens}.

Suppose now that $A$ is a finite group.
Then $A^{X^*}$ is a compact totally disconnected (i.e., profinite) group, whose basic open neighbourhoods of the identity are the cylinder sets
$$\Triv^A(n):=\{f\in A^{X^*}: f(u)=e_A \text{ for all } u\in X^{< n}\}, \text{ where } X^{<n}:=\bigcup_{i=0}^{n-1}X^i, \, n\geq 1.$$
The subspace topology of $G\leq A^{X^*}$ has $\Triv^A_G(n):=G\cap \Triv^A(n), \, n\geq 1$ as basic identity neighbourhoods.

\begin{notation}
	 For an element $g\in A^{X^*}$  and $v\in X^{*}$ we write $\delta_v(g)$ to mean the element $f\in A^{X^*}$ such that: 
	 \begin{enumerate}
	 	\item $f_v=g$,
	 	\item $f(u)$ is trivial for all prefixes $u$ of $v$,
	 	\item $f_w$ is trivial for all $w\in X^*$ that is not comparable with $v$.
	 \end{enumerate}
Given a subgroup $H\leq A^{X^*}$ and $n\in\NN$, define $X^n\ast H:=\prod_{v\in X^n}\delta_v(H)\leq \Triv^A(n)$.
\end{notation}

\begin{definition}\label{def:branchsymbolically}
	A subgroup $G\leq A^{X^*}$ that is self-similar over $A$ is said to \emph{branch symbolically over $H\leq G$} if $H\geq X\ast H$. 
In particular, $$\Triv^A_G(1)\geq\Triv^A_H(1)\geq X\ast H\geq X\ast \Triv_H^A(1)=\Triv^A_H(2).$$ 
\end{definition}

The most obvious example of a group $A$ to take is $A=\Sym(X)$.
Then the full shift $\Sym(X)^{X^*}$ can be viewed as the group $\Aut(X^*)$ of rooted tree automorphisms
    \[ \Aut(X^*)
    = \varprojlim_{n\geq 1} \Aut(X^{\leq n}) = \varprojlim_{n\geq 1} \overbrace{\Sym(X)\wr \dots \wr\Sym(X)}^n,
    \]
where $\Aut(X^{\leq n})=\overbrace{\Sym(X)\wr \dots \wr\Sym(X)}^n$ is the automorphism group of the finite subtree of $X^*$ consisting of words of length at most $n$. Multiplication and inversion of portraits are the operations in an infinitely iterated wreath product: if $f,g \in \Sym(X)^{X^*}$ then 
\begin{align*}
    fg(\epsilon)&=f(\epsilon)g(\epsilon)\\
    fg(x)&=f(g(\epsilon)(x))g(x),~x\in X\\
    &\dots\\
    fg(x_1x_2\dots x_n)&=f(y_1y_2\dots y_n)g(x_1x_2\dots x_n)
    \text{ where } y_i=g(x_1 x_2 \dots x_{i-1})(x_i),~x_i\in X.
\end{align*}
The element $y_1\dots y_n\in X^*$ is the image of $x_1\dots x_n\in X^*$ under the action of $g$. 
Writing $g[u]$ for the image of $u\in X^*$ under $g\in \Aut(X^*)$, the last line above becomes
 \[
 fg(u)=f(g[u])g(u)\in \Sym(X).
 \]
 In this setting, $\Triv^A(n)=\St(n)=\bigcap_{v\in X^n}\St(v)$, the stabiliser of all vertices of length at most $n$, 
 and $\Aut(X^*)/\St(n)\cong \Aut(X^{\leq n})$.
The definition of multiplication ensures that the shift maps $\cdot_x:\Aut(X^*)\rightarrow \Aut(X^*)$ are partial endomorphisms (endomorphisms when restricted to $\St(x)$).
%


When $A=\Sym(X)$ we do not mention $A$ in the definition of self-similar or branching symbolically:
\begin{definition}\label{def:self-sim_aut}
A \emph{self-similar group} is a subgroup $G$ of $\Aut(X^*)$  that is self-similar over $\Sym(X)$, i.e., invariant under the right action of $X^*$ on $\Aut(X^*)$ given by the shift maps $\cdot_w: \Aut(X^*)\rightarrow \Aut(X^*), f_w(u)=f(wu)$, for $w\in X^*$. 
The group $G$ is  \emph{self-replicating} if it is self-replicating over $\Sym(X)$.

A \emph{closed self-similar group} is a closed subgroup of $\Aut(X^*)$ that is also self-similar (i.e. a subgroup of $\Aut(X^*)$ that is also a tree-sub-shift). 

A self-similar group $G\leq \Aut(X^*)$ \emph{branches over $K\leq G$} if $X\ast K \leq K$. 
It is a \emph{regular branch} group if it branches over some finite index normal subgroup. 
\end{definition}

Given a self-similar group  $G\leq \Aut(X^*)$, its closure $\overline{G}\leq \Aut(X^*)$ is a closed self-similar group, called its \emph{congruence completion}.
It can also be seen as the inverse limit 
    \[
    \overline{G}=\varprojlim_{n\in\NN}G/\St_G(n)
    \]
of the inverse system  of groups $\{G/\St_G(n), n\in \NN \}$ and canonical morphisms $\{s_{m,n}:G/\St_G(m)\rightarrow G/\St_G(n), m\geq n \in \NN\}$ where $\St_G(n)=\St(n)\cap G$.

%

Many of the most studied examples of self-similar groups are regular branch. 
For such groups, it makes sense to consider the larger tree-shift space  $(\Sym(X)\times G/K)^{X^*}$ where now $A=\Sym(X)\times G/K\geq G/(\St(1)\cap K)$. 
Note that we can keep the previous wreath product structure, by letting all factors $G/K$ act trivially, so that $(\Sym(X)\times G/K)^{X^*}$ becomes the group $S_K:= \Aut(X^*) \ltimes (G/K)^{X^*}$. 
Recalling that we denoted by $g[u]$ the image of vertex $u\in X^*$ under $g\in\Aut(X^*)$, multiplication in $S_K$ is defined by 
    \[
    (f,h)(g,l)=(fg,h^gl)
    \]
where $h^g(u)=h(g[u])\in G/K$ and multiplication in $G/K^{X^*}$ is performed component-wise. 
Again, this definition ensures that all shift maps $\cdot_x: S_K\rightarrow S_K$ for $x\in X$, are partial endomorphisms.

For this case, we will denote $\Triv^{\Sym(X)\times G/K}(n)$ simply by $\Triv^K(n)$. 
Having identified $(\Sym(X)\times G/K)^{X^*}$ with $S_K=\Aut(X^*)\ltimes (G/K)^{X^*}$, these subgroups become
$$\Triv^K(n)=\{f\in S_K: f(u)=(\mathrm{id},K) \text{ for all } u\in X^{< n}\}.$$

%
    

%
%

Given any self-similar group $G\leq \Aut(X^*)$ that branches over some finite-index normal subgroup $K$, there is a natural embedding
 $$\theta_K:G\hookrightarrow S_K,\quad g\mapsto (g(u), (g_{u}K)_{u\in X^*})$$
 where $g_{u}K$ is,  for $u\in X^*$, the image modulo $K$ of the $u$-shift $g_{u}\in G$ of $g\in G$. 
By the definition of multiplication in $S_K$, the map $\theta_K$ is a homomorphism. 
Recalling that we have identified $S_K=\Aut(X^*)\ltimes (G/K)^{X^*}$ with $(\Sym(X)\times G/K)^{X^*}$, the element $\theta_K(g)$ can be identified with $(g_u(\St(1)\cap K))_{u\in X^*}$, because of the wreath product structure we have given to $(\Sym(X)\times G/K)^{X^*}$ and because we are viewing $G/(\St(1)\cap K)$ as a subgroup of $G/K\times G/\St_G(1)\leq G/K\times \Sym(X)$.

\begin{notation}
	We will identify $G$ with its image $\theta_K(G)\leq S_K$ whenever this is convenient.
	Abusing notation, $\Triv_G^K(n)$ will denote $\Triv^K_{\Theta_K(G)}(n)=\theta_K^{-1}(\Triv^K(n))=\St_G(n)\cap X^{n-1}\ast K$.
\end{notation}

\begin{definition}
The closure $\widetilde{G_K}$ of $G$ in $S_K$ is a profinite group that is self-similar over $\Sym(X)\times G/K$ and branches symbolically over the closure of $K$ in $S_K$. 
It is called the \emph{$K$-symbolic completion} of $G$. 
It can also be seen as the inverse limit 
$$\widetilde{G_K}=\varprojlim_{n\in\NN}G/\Triv_G^K(n)$$ 
of the inverse system  of groups $\{G/\Triv_G^K(n) : n\geq 1\in \NN \}$ and canonical homomorphisms
 $\{t_{m,n}:G/\Triv_G^K(m)\rightarrow G/\Triv_G^K(n) : m\geq n \geq 1 \in \NN\}$.
\end{definition}
 
\begin{definition}
Given a group $G\leq \Aut(X^*)$ one can also consider,
for a vertex $v\in X^*$, the rigid stabiliser of $v$ in $G$:
$$\rst_G(v)=\{g\in G: g(u)=\mathrm{id}\in\Sym(X) \text{ for all } X^*\setminus vX^*\}$$
the subgroup that fixes all vertices outside the subtree $vX^*$ with root $v$. 
Note that, if $vX^*\cap uX^*=\emptyset$, then $\rst_G(v)$ and $\rst_G(u)$ commute. 
In particular, the subgroup generated by all $\rst_G(v)$ for $v\in X^n$ is the direct product
$$\rst_G(n)=\prod_{v\in X^n}\rst_G(v)$$
and called the \emph{rigid level stabiliser} of level $n$ in $G$.  

If $G$ is regular branch  over $K$, then $X\ast K \leq \rst_G(1)$ has finite index in $G$. 
Continuing inductively,  $X^n\ast K\leq \rst_G(n)$ all have finite index in $G$, for $n\geq 1$. 
This last condition, along with a transitive action on $X^n$ for each $n$, ensures that $G$ is a \emph{branch group}. 
The completion $\varprojlim_n G/\rst_G(n)$  is called the \emph{branch completion} of $G$. 

\end{definition}

In general, since $\Triv^K_G(n)=\St_G(n)\cap X^{n-1}\ast K\leq \rst_G(n-1)$, the $K$-symbolic completion of $G$ maps onto the branch completion. 
For a certain choice of $K$, these two completions actually coincide:

The subgroup $M:=\bigcap_{u\in X^*}(\rst_G(u))_u$ is the unique maximal branching subgroup of $G$. 
If $G$ is transitive on all levels of $X^*$, then, according to \cite[Corollary 1.6]{bartholdi-s-z:congruence}, there exists $n\geq0$ such that $X^m\ast M\geq \rst_G(m+n)$ for all $m\geq0$. 
In particular, 
$$\Triv^M_G(m+1)=\St_G(m+1)\cap X^m\ast M\geq \St_G(m+1)\cap \rst_G(m+n)\geq \rst_G(m+n+1).$$
Thus $\{\Triv^M_G(n): n\geq 1\in\NN\}$ and $\{\rst_G(n):n\geq 1\in\NN\}$ generate the same profinite topology, so the $M$-symbolic and branch completions of $G$ coincide. 


\begin{remark}
	In the remainder, we shall always assume that $G\leq \Aut(X^*)$ is a self-similar, level-transitive, regular branch group, so that we can identify the $M$-symbolic with the branch completion of $G$, and omit the $M$ notation, writing $\Triv_G(n)$ instead of $\Triv^M_G(n)$. 
\end{remark}

\subsection{Completions and kernels}\label{subsec:kernels}

Let $G\leq \Aut(X^*)$ be a self-similar, level-transitive regular branch group with maximal branching subgroup $K$, so that there exists $k\in \NN$ such that $\Triv_G(n)\geq \rst_G(n+k)\geq\Triv_G(n+k)$ for every $n\in \NN$.
For every $m\geq n\geq 1\in \NN $, the following square of canonical morphisms commutes:
\[
\begin{tikzcd}
	G/\Triv_G(m) \arrow[r, "\psi_m"] \arrow[d, "t_{m,n}"] & G/\St_G(m) \arrow[d, "s_{m,n}"] \\
	G/\Triv_G(n) \arrow[r, "\psi_n"] & G/\St_G(n)               
\end{tikzcd}
\]
This gives a unique morphism $\psi: \widetilde{G}=\varprojlim_n G/\Triv_G(n) \rightarrow \overline{G}=\varprojlim_n G/\St_G(n)$ defined by $\psi((g_n)_n)=(\psi_n(g_n))_n$, whose kernel is 
$$\ker \psi=\varprojlim_n \ker\psi_n=\varprojlim_n\St_G(n)/\Triv_G(n)$$
the inverse limit of the inverse system whose maps $r_{m,n}:\St_G(m)/\Triv_G(m)\rightarrow  \St_G(n)/\Triv_G(n)$  are simply the restrictions of the $t_{m,n}$ for all  $m\geq n\geq 1\in \NN$.

\begin{definition}
	With notation as above, $\ker\psi$ is the \emph{rigid (or symbolic) kernel} of $G$.
\end{definition}


Since $G$ is residually finite, it embeds into its profinite completion $\widehat{G}=\varprojlim_{N\unlhd_f G} G/N$, the inverse limit of the inverse system $\{G/N : N\unlhd_f G\}$ of finite quotients of $G$, and canonical maps $q_{M,N}:G/M\rightarrow G/N$ for $M\leq N \unlhd_f G$. 

Since each $\Triv_G(n)$ and $\St_G(n)$ are normal subgroups of finite index in $G$, restricting to $\widehat{G}$ the projection maps to, respectively,  $\prod_{n}G/\Triv_G(n)$ and $\prod_{n}G/\St_G(n)$ gives surjective morphisms $\phi:\widehat{G}\rightarrow \widetilde{G}$ and $\theta: \widehat{G}\rightarrow \overline{G}$.
Moreover, $\theta= \psi\circ\phi$. 

This gives two further kernels: the \emph{branch kernel} $\ker \phi =\varprojlim_{N\unlhd_f \Triv_G(n), n}\Triv_G(n)/N$ and the \emph{congruence kernel} $\ker\psi\circ\phi=\varprojlim_{N\unlhd_f\St_G(n), n}\St_G(n)/N$.

These completions and their kernels are important to understand the structure of groups acting on rooted trees, as they give insight into their finite quotients. 
By analogy with arithmetic groups, groups for which the congruence kernel is trivial are said to have the \emph{congruence subgroup property}.

Examples include many of the first discovered self-similar groups (Grigorchuk, Gupta--Sidki, etc.,). 
The first examples of groups with non-trivial congruence kernels were constructed by Pervova in \cite{pervova:completions}, where she also showed that they have trivial rigid kernel. 
It was not until \cite{bartholdi-s-z:congruence}, where a systematic study of the congruence subgroup property for branch groups was undertaken, that an example (the Hanoi towers group) was shown to have non-trivial branch and rigid kernel. 
Further examples followed in \cite{skipper:congruence}. 
The computations to find the rigid kernel were technical in these cases. 
The advantage of looking at tree-shifts and portraits is that it clarifies what these kernels are:

The map $\psi$ is precisely the restriction to $\widetilde{G}$ of the canonical epimorphism $S_M=\Aut(X^*)\ltimes (G/M)^{X^*} \twoheadrightarrow \Aut(X^*)$.
The kernel of $\psi$  therefore  consists of all tree portraits in $\widetilde{G}$ that have trivial $\Sym(X)$ part in every vertex.

\subsection{Finitely constrained groups}
 We briefly return to the general setting of shifts of $A^{X^*}$ (closed subsets that are invariant under shift maps), where $A$ is any set. 
 A \emph{pattern} is a function from $X^{<n}$ to $A$ for some $n\geq 1$, which is the \emph{size} of the pattern.
 We say that a pattern of size $n\geq 1$ \emph{appears} in some $f\in A^{X^*}$ if there exists $u\in X^*$ such that $f_u|_{X^{<n}}$ is the specified pattern.
 We will also say that this is \emph{the pattern of size $n$ at $u$ of $f$. }
 
 It is well-known in symbolic dynamics that any shift can be defined by declaring some collection of \emph{forbidden patterns} which do not appear in any element of the shift.
  If a finite collection of forbidden patterns suffices for this definition, the shift is called a \emph{shift of finite type}.
  
In the case where $A$ is a group, one can analogously define and study \emph{groups of finite type} or \emph{finitely constrained groups}: subgroups of $A^{X^*}$ that are simultaneously shifts of finite type. 
These groups are characterised in the following theorem \cite{penland-sunic:kitchens}.

\begin{theorem}[\cite{penland-sunic:kitchens}]\label{thm:ps_fincons_regbranch}
	 Let $\Gamma\leq A^{X^*}$ where $A$ is a group that acts on $X$, and $n\geq 1$. The following are equivalent:
\begin{enumerate}
 \item $\Gamma$ is a finitely constrained group defined by patterns of size $n$.
 \item $\Gamma$ is closed, self-similar over $A$, and branches symbolically over  $\Triv_{\Gamma}(n-1)$.
 \item  $\Gamma$ is the closure of a group $G\leq A^{X^*}$ that is self-similar over $A$ and branches symbolically over $\Triv_G(n-1)$.
\end{enumerate}
\end{theorem}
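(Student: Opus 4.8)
The plan is to prove $(1)\Leftrightarrow(2)$ as the core equivalence and then dispatch $(2)\Leftrightarrow(3)$ by soft topological arguments. Throughout, write $\pi_n\colon A^{X^*}\to A^{X^*}/\Triv(n)$ for the truncation homomorphism, whose kernel is $\Triv(n)$ and whose finite image is identified with the size-$n$ patterns rooted at $\epsilon$. Set $L_n(\Gamma):=\pi_n(\Gamma)$, a subgroup of the finite group $A^{X^*}/\Triv(n)$; by self-similarity of $\Gamma$ this coincides with the set of all size-$n$ patterns occurring at any vertex of any configuration of $\Gamma$. Since a shift of finite type is recovered from the patterns it actually contains, ``$\Gamma$ is finitely constrained by patterns of size $n$'' is equivalent to the identity $\Gamma=\{\,f\in A^{X^*}:\pi_n(f_v)\in L_n(\Gamma)\text{ for all }v\in X^*\,\}$, where the inclusion $\subseteq$ is automatic and only $\supseteq$ carries content. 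I will also use freely that $X\ast\Triv(n-1)=\Triv(n)$ in the full shift, and that $X^{j}\ast\Triv_\Gamma(n-1)\le\Gamma$ for all $j$ once $X\ast\Triv_\Gamma(n-1)\le\Triv_\Gamma(n-1)$ is established (iterate the relation $X\ast K\le K$).

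For $(1)\Rightarrow(2)$: $\Gamma$ is closed and self-similar because it is a shift, and $K:=\Triv_\Gamma(n-1)=\Gamma\cap\Triv(n-1)$ is normal (as $\Triv(n-1)\trianglelefteq A^{X^*}$) of finite index (its quotient embeds into the finite group $A^{X^{<n-1}}$). The one substantial point is $X\ast K\le\Gamma$. Take $f=\prod_{x\in X}x\ast k_x$ with each $k_x\in K$. Because the factors have disjoint subtree supports and fix the root, $f(xu')=k_x(u')$ and $f(\epsilon)=\mathrm{id}$; as $k_x\in\Triv(n-1)$ this forces $f\in\Triv(n)$, so the size-$n$ pattern of $f$ at the root is trivial, hence in $L_n(\Gamma)$. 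For any $v\ne\epsilon$ the vertices $vu$ with $|u|<n$ lie in a single first-level subtree $xX^*$, so the size-$n$ pattern of $f$ at $v$ is a (shifted) size-$n$ pattern of $k_x\in\Gamma$, again in $L_n(\Gamma)$. By finite-typeness $f\in\Gamma$. Together with $X\ast K\le\Triv(n)\le\Triv(n-1)$ this gives $X\ast K\le K\le\Gamma$, so $\Gamma$ branches over $K=\Triv_\Gamma(n-1)$.

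The heart of the argument, and the step I expect to be hardest, is $(2)\Rightarrow(1)$: I must upgrade the branch structure into the finite-type identity, and the only inclusion to prove is that every $f$ with all local size-$n$ patterns in $L_n(\Gamma)$ lies in $\Gamma$. Since $\Gamma$ is closed, it suffices to approximate, producing for every $m$ some $g\in\Gamma$ agreeing with $f$ on all levels below $m$; I will do this by induction on $m$, the base $m=n$ being exactly $\pi_n(f)\in L_n(\Gamma)=\pi_n(\Gamma)$. For the step, suppose $g\in\Gamma$ agrees with $f$ on levels below $m\ge n$. For each $v\in X^{m-n+1}$ the patterns $\pi_n(f_v)$ and $\pi_n(g_v)$ both lie in $L_n(\Gamma)$ and agree on relative levels $0,\dots,n-2$, so $q_v:=\pi_n(g_v)^{-1}\pi_n(f_v)$ lies in $L_n(\Gamma)\cap\Triv(n-1)$, which I identify with $\pi_n(\Triv_\Gamma(n-1))$; thus $q_v=\pi_n(k_v)$ for some $k_v\in\Triv_\Gamma(n-1)$. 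Setting $c:=\prod_{v\in X^{m-n+1}}v\ast k_v\in X^{m-n+1}\ast\Triv_\Gamma(n-1)\le\Gamma$ and $g':=gc\in\Gamma$, the correction $c$ lies in $\Triv(m)$, so $g'$ still agrees with $f$ below level $m$; a short wreath-product computation, using that $c$ and each $q_v$ fix all vertices up to the relevant level, shows $g'(w)=g_v(w')q_v(w')=\pi_n(f_v)(w')=f(w)$ for $w=vw'$ of length $m$. This completes the induction, and hence the implication.

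Finally, $(2)\Rightarrow(3)$ is immediate by taking $G=\Gamma$. For $(3)\Rightarrow(2)$ I would take $\Gamma=\overline G$ and verify the three properties by soft topology: closures of self-similar groups are self-similar and closed; since $\Triv(n-1)$ is clopen one has $\Triv_{\overline G}(n-1)=\overline{\Triv_G(n-1)}$; and the subtree-support construction $X\ast(-)$ is continuous with $\overline{X\ast K}=X\ast\overline K$, so applying closure to $X\ast\Triv_G(n-1)\le\Triv_G(n-1)\le G$ yields $X\ast\Triv_{\overline G}(n-1)\le\Triv_{\overline G}(n-1)\le\overline G$. The main obstacle remains the inductive approximation in $(2)\Rightarrow(1)$, where closedness and the iterated branching relation must be combined with careful level-by-level bookkeeping in the wreath product.
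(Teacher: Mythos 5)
This theorem is quoted by the paper from \cite{penland-sunic:kitchens}; no proof of it appears in the paper itself, so there is no internal argument to compare yours against, and I can only judge your proof on its own terms: it is correct and self-contained. Your reduction of ``finitely constrained by patterns of size $n$'' to the identity $\Gamma=\{f\in A^{X^*}:\pi_n(f_v)\in L_n(\Gamma)\text{ for all }v\}$ is legitimate (self-similarity gives the forward inclusion, and any defining allowed-pattern set contains $L_n(\Gamma)$, giving the backward one), and both halves of the core equivalence go through. In (i)$\Rightarrow$(ii) the check that every local size-$n$ pattern of an element of $X\ast\Triv_\Gamma(n-1)$ lies in $L_n(\Gamma)$ is exactly right. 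In (ii)$\Rightarrow$(i), the level-by-level correction is the right mechanism and your bookkeeping is sound; the one step you assert without justification --- the identification $L_n(\Gamma)\cap\pi_n(\Triv(n-1))=\pi_n(\Triv_\Gamma(n-1))$, which is precisely what places your correctors $k_v$ inside $\Gamma$ rather than merely inside the full shift --- does hold, because $\ker\pi_n=\Triv(n)\leq\Triv(n-1)$: if $\gamma\in\Gamma$ satisfies $\pi_n(\gamma)=\pi_n(t)$ with $t\in\Triv(n-1)$, then $\gamma t^{-1}\in\Triv(n)\leq\Triv(n-1)$, so $\gamma\in\Triv_\Gamma(n-1)$. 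You should write this out, since it is the only place where ``intersection of images equals image of intersection'' could fail in general. Two smaller points worth recording: finiteness of $A$ is used implicitly (finitely many size-$n$ patterns, finite index of $\Triv_\Gamma(n-1)$), consistent with the paper's standing assumptions; and in (iii)$\Rightarrow$(ii) the equality $\overline{X\ast K}=X\ast\overline{K}$ deserves its one-line justification ($K^X$ is compact and $(k_x)_{x\in X}\mapsto\prod_{x\in X}x\ast k_x$ is continuous, so the image is closed and contains $X\ast K$ densely). With those additions your write-up is a complete proof, and its strategy --- correct an approximating element of $\Gamma$ at level $m$ by an element of $X^{m-n+1}\ast\Triv_\Gamma(n-1)$ and invoke closedness --- is the natural one underlying the cited result.
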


 Observe that if $G\leq \Aut(X^*)$ is a self-similar, regular branch group, with maximal branching subgroup $K$, then 
 $$\Triv^K_G(2)=\St_G(2)\cap X\ast K = X\ast (\St_G(1)\cap K)=X\ast \Triv^K_G(1).$$
 In other words, $G$ branches over $\Triv^K_G(1)$, so, after making the identifications explained in the previous section, we obtain from Theorem \ref{thm:ps_fincons_regbranch}:
 
 \begin{corollary}\label{cor:symb_compl_fincons_patterns2}
 	If $G\leq\Aut(X^*)$ is a self-similar, regular branch group, with maximal branching subgroup $K$, then the symbolic (branch) completion $\widetilde{G_K}$ is a finitely constrained group defined by patterns of size 2. 
 \end{corollary}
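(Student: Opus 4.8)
The plan is to deduce the corollary directly from Theorem \ref{thm:ps_fincons_regbranch}, applied inside the group $S_K = \Aut(X^*)\ltimes (G/K)^{X^*}$ viewed as the full shift $A^{X^*}$ over the acting group $A = \Sym(X)\times G/K$ (with the $G/K$ factor acting trivially on $X$). Under this identification, $\widetilde{G_K}$ is by definition the closure in $S_K$ of the image $\theta_K(G)$, so it suffices to check that $\theta_K(G)$ is a self-similar, regular branch subgroup of $S_K$ whose branching subgroup is $\Triv_{\theta_K(G)}(1) = \Triv^K_G(1)$. Then the implication $(iii)\Rightarrow(i)$ of the theorem, taken with $n=2$, yields precisely that the closure is finitely constrained by patterns of size $2$.

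First I would confirm self-similarity of $\theta_K(G)$ in $S_K$. Since the shift maps on $S_K$ are endomorphisms, a direct computation on the two coordinates of $\theta_K(g)=(g,(g_uK)_u)$ gives $\theta_K(g)_x = (g_x, ((g_x)_uK)_u) = \theta_K(g_x)$; as $g_x\in G$ by self-similarity of $G$, the image $\theta_K(G)$ is shift-invariant, hence self-similar. I would then identify $\Triv_{\theta_K(G)}(1)$: an element $\theta_K(g)$ lies in it exactly when $\theta_K(g)(\epsilon)=(g(\epsilon),gK)$ equals $(\mathrm{id},K)$, that is, when $g\in\St_G(1)\cap K = \Triv^K_G(1)$. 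Thus $\Triv_{\theta_K(G)}(1)=\theta_K(\Triv^K_G(1))$, matching the required branching subgroup under the identification of $G$ with $\theta_K(G)$.

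Next I would verify the regular branch structure over $\Triv^K_G(1)$. The subgroup $\Triv^K_G(1)=\St_G(1)\cap K$ is an intersection of two normal finite-index subgroups of $G$ ($K$, being the maximal branching subgroup, is normal of finite index, and $\St_G(1)$ is a level stabilizer), hence is itself normal of finite index. The displayed observation preceding the corollary, $\Triv^K_G(2)=X\ast\Triv^K_G(1)$, combined with the chain $X\ast\Triv^K_G(1)=\Triv^K_G(2)\leq\Triv^K_G(1)\leq\theta_K(G)$, establishes the defining inclusion $X\ast\Triv^K_G(1)\leq\Triv^K_G(1)$ of a regular branch structure (the inclusion $\Triv^K_G(2)\leq\Triv^K_G(1)$ itself following from $\St_G(2)\leq\St_G(1)$ and $X\ast K\leq K$).

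With these hypotheses in place, applying Theorem \ref{thm:ps_fincons_regbranch} with $A=\Sym(X)\times G/K$ and $n=2$ completes the argument. I expect the only genuinely delicate point to be the bookkeeping of ambient groups: one must be scrupulous that the ``$A$'' and ``$\Triv$'' of the cited theorem are those attached to $S_K$, equivalently to the $K$-symbolic (hence, when $K$ is maximal, the branch) completion, rather than to the congruence completion $\cl{G}\leq\Aut(X^*)$. Once the identification $S_K=A^{X^*}$ is fixed, the remaining verifications are the routine ones sketched above.
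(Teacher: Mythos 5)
Your proposal is correct and takes essentially the same route as the paper: the paper's (inline) proof consists precisely of the observation $\Triv^K_G(2)=\St_G(2)\cap X\ast K=X\ast(\St_G(1)\cap K)=X\ast\Triv^K_G(1)$, i.e.\ that $G$, identified with $\theta_K(G)\leq S_K$, branches over $\Triv^K_G(1)$, followed by an application of Theorem \ref{thm:ps_fincons_regbranch} with $n=2$. The additional verifications you spell out (self-similarity of $\theta_K(G)$ in $S_K$, the identification $\Triv_{\theta_K(G)}(1)=\theta_K(\Triv^K_G(1))$, and normality/finite index of $\St_G(1)\cap K$) are exactly the details the paper leaves implicit under its standing identification of $G$ with $\theta_K(G)$.
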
 

Moreover, if $G$ is a self-similar, regular branch group, with maximal branching subgroup $K$ then the kernel $R_K$ of the canonical `forgetful' map $\Psi: S_K=\Aut(X^*)\ltimes (G/K)^{X^*} \twoheadrightarrow \Aut(X^*)$ is precisely $R_K=\mathrm{id}\ltimes (G/K)^{X^*}$, which is evidently a finitely constrained group defined by patterns of size 1. 
The rigid kernel of $G$ is therefore $\widetilde{G_K}\cap R_K$, the intersection of two closed groups that are self-similar over $\Sym(X)\times G/K$, and it  branches symbolically over $\Triv^K_{R_K\cap\widetilde{G_K}}(1)$.
Applying Theorem \ref{thm:ps_fincons_regbranch} we conclude:

\begin{corollary}\label{cor:ker_fs_2}
	If $G\leq\Aut(X^*)$ is a self-similar, regular branch group, with maximal branching subgroup $K$, then the rigid kernel $\ker:\widetilde{G_K}\rightarrow \cl{G}$ is a finitely constrained group defined by patterns of size 2. 
\end{corollary}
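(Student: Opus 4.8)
The plan is to realize the rigid kernel as an intersection of two finitely constrained groups and then invoke Theorem~\ref{thm:ps_fincons_regbranch} directly. The statement to prove is Corollary~\ref{cor:ker_fs_2}, and the surrounding text has already assembled almost all the ingredients, so the task is mostly to verify that an intersection of finitely constrained groups is again finitely constrained, and to identify the branching subgroup.

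First I would recall the two objects living inside $S_K = \Aut(X^*)\ltimes (G/M)^{X^*}$ (here I read the intended $K$ as the maximal branching subgroup, matching the notation $S_K$ and $\widetilde{G_K}$). On one side, Corollary~\ref{cor:symb_compl_fincons_patterns2} gives that $\widetilde{G_K}$ is a finitely constrained group defined by patterns of size $2$, equivalently a closed self-similar regular branch group branching over $\Triv_{\widetilde{G_K}}(1)$. On the other side, the kernel $R_K = \mathrm{id}\ltimes (G/K)^{X^*}$ of the forgetful map $\Psi\colon S_K \twoheadrightarrow \Aut(X^*)$ is a full shift in the $(G/K)$-coordinates, hence finitely constrained by patterns of size $1$; its branching subgroup is $\Triv_{R_K}(0)=R_K$ itself (no level constraint). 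The rigid kernel is exactly $\widetilde{G_K}\cap R_K$, as explained just above the statement, since $\psi$ is the restriction of $\Psi$ to $\widetilde{G_K}$ and its kernel consists of the portraits in $\widetilde{G_K}$ with trivial $\Sym(X)$-part at every vertex.

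The substantive step is the general principle that an intersection of two finitely constrained subgroups of $A^{X^*}$ is again finitely constrained, with the defining pattern size being the maximum of the two. Concretely: a closed self-similar subgroup of $A^{X^*}$ is finitely constrained by patterns of size $n$ precisely when membership is detected by the allowed patterns on the first $n$ levels, i.e. when it equals the set of portraits all of whose size-$n$ subpatterns are allowed. Intersecting the allowed-pattern conditions for $\widetilde{G_K}$ (size $2$) and for $R_K$ (size $1$, which is a fortiori a size-$2$ condition by ignoring the extra level) yields a single finite set of allowed patterns of size $2$ whose associated shift of finite type is exactly $\widetilde{G_K}\cap R_K$. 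Equivalently, using the equivalence (i)$\Leftrightarrow$(ii) of Theorem~\ref{thm:ps_fincons_regbranch}, I would check that $\widetilde{G_K}\cap R_K$ is a closed self-similar regular branch group: closedness and self-similarity are inherited from intersecting two closed self-similar groups, and the regular branch structure follows because both factors branch over their level-$1$-type subgroups, so the intersection branches over $\Triv_{\widetilde{G_K}\cap R_K}(1)$, giving pattern size $2$ by the theorem.

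The main obstacle, and the only point requiring care, is verifying that the branching subgroup of the intersection is genuinely $\Triv_{\widetilde{G_K}\cap R_K}(1)$ rather than something forcing a larger pattern size; that is, one must confirm $X\ast\bigl(\widetilde{G_K}\cap R_K\bigr)\subseteq \widetilde{G_K}\cap R_K$ and that this copy of the level-$1$ truncation has finite index, so that Theorem~\ref{thm:ps_fincons_regbranch}(ii)$\Rightarrow$(i) applies with $n=2$. The containment is the formal one: $X\ast(\,\cdot\,)$ preserves both the size-$2$ constraints of $\widetilde{G_K}$ and the size-$1$ constraint of $R_K$, since appending a root that is trivial in the $\Sym(X)$-coordinate does not create any forbidden size-$2$ pattern for $\widetilde{G_K}$ (its branching over $\Triv(1)$ already guarantees this) and trivially respects the full-shift condition of $R_K$ in the $(G/K)$-coordinates. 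Finite index is inherited since $\Triv_{\widetilde{G_K}}(1)$ has finite index in $\widetilde{G_K}$. With these checks in place, Theorem~\ref{thm:ps_fincons_regbranch} immediately upgrades the intersection to a finitely constrained group defined by patterns of size $2$, which is the assertion.
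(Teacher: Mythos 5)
Your overall strategy is the same as the paper's: identify the rigid kernel with $\widetilde{G_K}\cap R_K$, record that $\widetilde{G_K}$ is finitely constrained by patterns of size $2$ (Corollary~\ref{cor:symb_compl_fincons_patterns2}) and that $R_K$ is constrained by patterns of size $1$, and conclude via Theorem~\ref{thm:ps_fincons_regbranch}. Your first route --- intersecting the allowed-pattern conditions, so that the intersection of two tree-shifts of finite type (which are subgroups) is again a group shift of finite type with pattern size the maximum of the two --- is correct and complete as it stands, and is even a little more self-contained than the paper, which simply asserts that the intersection has branching subgroup $\Triv_{\widetilde{G_K}\cap R_K}(1)$ and then quotes Theorem~\ref{thm:ps_fincons_regbranch}.

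However, the verification you single out as ``the only point requiring care'' contains a genuine error: the containment $X\ast\bigl(\widetilde{G_K}\cap R_K\bigr)\subseteq \widetilde{G_K}\cap R_K$ is false in general, and your justification conflates triviality in the $\Sym(X)$-coordinate with triviality in the full alphabet $A=\Sym(X)\times G/K$. An element $g$ of the rigid kernel can have a nontrivial $G/K$-decoration at the root, while $\widetilde{G_K}$ branches over $\Triv_{\widetilde{G_K}}(1)$, which requires the root decoration to be trivial in \emph{both} coordinates; so ``appending a trivial root'' above such a $g$ creates the size-$2$ pattern with trivial root and one nontrivial $G/K$-entry at level $1$, and that pattern is allowed for $\widetilde{G_K}$ only if some element of $\St_G(2)\cap K$ has the prescribed nontrivial section modulo $K$. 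This fails, for instance, in the case $d=3$ of Section~\ref{sec:3}: there $\St_D(2)\cap D'=\Triv_D(2)$ while $\St_D(2)/\Triv_D(2)$ is the Klein $4$-group with nontrivial root $G/K$-decorations, so shifting a nontrivial element of the rigid kernel into a subtree leaves the kernel; this asymmetry between the root and the other vertices is precisely what produces the shape $\St_G(2)/\Triv_G(2)\times\prod_{X^*\setminus\{\epsilon\}}(\St_G(2)\cap K)/\Triv_G(2)$ in Theorem~\ref{thm:rigid_kernel_surjective_system_direct_prod}. What Theorem~\ref{thm:ps_fincons_regbranch} actually requires --- and what is true --- is only the weaker statement you assert (without the flawed elaboration) in your previous paragraph: $X\ast\Triv_{\widetilde{G_K}\cap R_K}(1)\leq \Triv_{\widetilde{G_K}\cap R_K}(1)$, with this subgroup normal and of finite index. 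That holds because the sections involved lie in $\Triv_{\widetilde{G_K}}(1)\cap\Triv_{R_K}(1)$, i.e.\ have root decoration trivial in both coordinates, so the branching of each of the two factors applies. With the containment corrected in this way, your second route also goes through.
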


Corollary \ref{cor:symb_compl_fincons_patterns2} also  has consequences for the structure of $\cl{G}$ as a tree shift. 
It is natural to wonder whether it is also a finitely constrained group. 
A priori, it is only a \emph{sofic tree shift}.
For our purposes, (see \cite[Section 2]{cech-c-f-s:cellautomata} for equivalent definitions),  given two finite alphabets $A, B$, a sofic tree shift in $B^{X^*}$  is the image of a tree shift of finite type in $A^{X^*}$  under a  continuous map that commutes with the shift action of $X^*$. 

The canonical  map $\Psi:S_K=\Aut(X^*)\ltimes(G/K)^{X^*} \rightarrow \Aut(X^*), (f,h)\mapsto f$ is indeed continuous and it commutes with the shift action of $X^*$ on $S_K$ and $\Aut(X^*)$ because 
$$\Psi((f,h)_u)(v)=f(uv)=\Psi(f,h)_u(v)\in \Sym(X)$$
holds for all $(f,h)\in S_K$ and $u,v\in X^*$. 
The map $\psi:\widetilde{G}\rightarrow \overline{G}$ is the restriction of $\Psi$ to the tree shift of finite type $\widetilde{G}$, so $\overline{G}$ is indeed a sofic tree shift. 

The following theorem of Penland and \v{S}uni\'{c} shows that, in many cases, groups that are sofic tree shifts are already finitely constrained groups.

\begin{theorem}[Theorem A of \cite{penland-sunic:kitchens}]
	Let $A$ be a finite group acting on $X$ and $G\leq A^{X^*}$.
	If the normalizer of $G$ in $A^{X^*}$ contains a level-transitive subgroup that is self-replicating over $A$, then $G$ is a sofic tree shift group if and only if $G$ is a finitely constrained group. 
\end{theorem}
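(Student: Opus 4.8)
The plan is to prove the two implications separately, noting that one is immediate while the other carries all the content. For the easy direction, if $G$ is a finitely constrained group then it is, by definition, a shift of finite type, and every shift of finite type is trivially a sofic tree shift (take the identity sliding block code). Since $G$ is also a group, it is a sofic tree shift group, and nothing further is needed here.

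The substance is the converse: assuming $G$ is a sofic tree shift group whose normalizer in $A^{X^*}$ contains a self-replicating, level-transitive subgroup $H$, I would show that $G$ is finitely constrained. First I would record the structural facts that come for free: being a tree shift, $G$ is closed and self-similar, so in particular every section $g_v$ of every $g\in G$ again lies in $G$. The strategy is then to verify the hypotheses of the equivalence in Theorem \ref{thm:ps_fincons_regbranch}, that is, to exhibit an $n$ for which $G$ is regular branch with branching subgroup exactly $\Triv_G(n-1)$; by that theorem this is precisely what it means for $G$ to be finitely constrained by patterns of size $n$.

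To locate such an $n$, I would unwind soficity into a concrete presentation: write $G=\pi(S)$ with $S$ a tree shift of finite type over some finite alphabet and $\pi$ a continuous, shift-commuting surjection onto $G$. By compactness $\pi$ is a sliding block code of some finite radius $r$, and $S$ is cut out by forbidden patterns of depth at most $r$. The heart of the argument is then a follower-set analysis adapted to trees, in the spirit of Kitchens' theorem that closed group shifts over finite alphabets are of finite type. Because $G$ is a group, the set of legal completions (the follower set) of a size-$n$ pattern $g|_{X^{<n}}$ is a coset of the follower set of the trivial pattern, and soficity forces there to be only finitely many such follower sets. What remains---and what I expect to be the main obstacle---is to upgrade \emph{finitely many follower sets} to \emph{follower sets determined by bounded depth}, the defect that separates a general sofic shift from a shift of finite type (the tree analog of the even shift). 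This is exactly where the normalizer hypothesis must be used: the self-replicating property of $H$ guarantees that its sections surject, while its level-transitivity, together with the fact that it normalizes $G$, makes the local structure of $G$ the same at every vertex of a given level. I would exploit this homogeneity to show that the completions below the frontier of a size-$(r+1)$ window may be chosen independently within a finite-index subgroup and consistently glued along the tree, so that membership in $G$ is detected by the size-$(r+1)$ windows alone.

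Concretely, the homogeneity provided by $H$ should identify the branching subgroup with $\Triv_G(r)$: the sections appearing at depth-$r$ vertices of elements of $\St_G(r)$ range over a fixed finite-index subgroup $K\le G$ satisfying $X\ast K\le K$, independent of the vertex, and the bounded radius $r$ forces $K=\Triv_G(r)$. Once $G$ is known to be regular branch with branching subgroup $\Triv_G(r)$, Theorem \ref{thm:ps_fincons_regbranch} immediately yields that $G$ is finitely constrained, defined by patterns of size $r+1$, completing the proof. The delicate point throughout is the gluing step, namely verifying that the locally legal and individually liftable windows assemble into a genuine global configuration of $S$ whose image under $\pi$ is the prescribed portrait; it is here that the interplay among the finite radius $r$, the group law on $G$, and the transitivity and self-replication of $H$ is indispensable.
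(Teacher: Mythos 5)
First, a point of order: the paper you are working against contains no proof of this statement at all. It is imported verbatim as Theorem A of \cite{penland-sunic:kitchens} and used as a black box (its only role is to yield Corollary \ref{cor:cong_completion_finitely_constrained}). So there is no in-paper argument to compare yours to; your proposal must stand on its own, and as written it does not. The easy direction is fine: a finitely constrained group is a tree shift of finite type, hence a sofic tree shift via the identity block code, and that direction indeed needs no hypothesis on the normalizer.

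The genuine gap is in the converse, and it sits exactly where you yourself flag ``the main obstacle.'' Your plan --- present $G$ as the image of a tree shift of finite type under a radius-$r$ sliding block code, note that the follower set of a pattern of an element of $G$ is a coset of the follower set of the trivial pattern, and conclude from soficity that there are finitely many follower sets --- is reasonable bookkeeping, but it stops short of the theorem's content. The passage from \emph{finitely many follower sets} to \emph{membership detected by windows of bounded depth} is precisely what separates a sofic shift from a shift of finite type, and your text only asserts that the homogeneity supplied by the normalizing subgroup $H$ ``should'' allow completions below a frontier to be chosen independently and glued; no construction, and no verification that the glued configuration lifts to the covering shift of finite type, is given. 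This cannot be waved through as an adaptation of Kitchens' one-dimensional theorem: in dimension one the group structure alone suffices and no normalizer hypothesis appears in the statement, whereas for tree shifts the hypothesis is essential, so a correct proof must use the self-replication and level-transitivity of $H$ in a concrete way that your sketch never specifies. In the paper's own terms, what has to be produced is some $r$ with $X\ast\Triv_G(r)\leq G$, i.e.\ a proof that an element of $G$ trivial on the first $r$ levels can be planted at a single first-level vertex, with the identity at all other vertices, and the resulting portrait still lies in $G$; this regular branch property is what Theorem \ref{thm:ps_fincons_regbranch} requires, and nothing in your proposal establishes it. A secondary flaw: your claim that ``the bounded radius $r$ forces $K=\Triv_G(r)$'' is unjustified, but also unnecessary --- if you ever produce a closed, normal, finite-index $K\leq G$ with $X\ast K\leq K$, then $K$ is open, hence contains $\Triv_G(m)$ for some $m$, and $X\ast\Triv_G(m)\leq K\leq G$ combined with $X\ast\Triv_G(m)\leq\Triv(m+1)$ gives branching over $\Triv_G(m)$. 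So the entire weight of the argument rests on producing such a $K$, which is the step your proposal leaves open.
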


This together with Corollary \ref{cor:symb_compl_fincons_patterns2}  allows us to conclude:
\begin{corollary}\label{cor:cong_completion_finitely_constrained}
	Let $G\leq \Aut(X^*)$ be a self-replicating, level-transitive, regular branch group. 
	Then the closure $\cl{G}$ of $G$ in $\Aut(X^*)$ is a finitely constrained group. 
\end{corollary}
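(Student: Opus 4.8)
\textbf{Proof plan for Corollary \ref{cor:cong_completion_finitely_constrained}.}

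The plan is to combine the preceding structural results directly: Theorem A of \cite{penland-sunic:kitchens} reduces the problem to showing that $\cl{G}$ is a sofic tree shift group whose normalizer in $\Sym(X)^{X^*}=\Aut(X^*)$ contains a self-replicating, level-transitive subgroup. Since the hypotheses of the corollary are exactly that $G$ itself is self-replicating, level-transitive, and regular branch, both of these conditions should fall out of material already established in the excerpt. First I would fix a maximal branching subgroup $K$ of $G$ and recall from Corollary \ref{cor:symb_compl_fincons_patterns2} that the $K$-symbolic completion $\widetilde{G_K}\leq S_K$ is a finitely constrained group defined by patterns of size $2$, hence in particular a tree shift of finite type. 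The map $\Psi:S_K=\Aut(X^*)\ltimes(G/K)^{X^*}\rightarrow\Aut(X^*)$ is continuous and commutes with the shift action of $X^*$, as verified in the excerpt, and its restriction $\psi$ to $\widetilde{G_K}$ has image exactly $\cl{G}$. Therefore $\cl{G}$ is the continuous, shift-equivariant image of a tree shift of finite type, i.e. a sofic tree shift, by the definition recalled just before the statement of Theorem A.

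Next I would supply the normalizer hypothesis. The natural candidate for the required self-replicating, level-transitive subgroup of the normalizer of $\cl{G}$ in $\Aut(X^*)$ is $G$ itself. Since $G$ is dense in $\cl{G}$ and $\cl{G}$ is a group, $G$ certainly normalizes $\cl{G}$ (indeed $G\leq\cl{G}$ and $\cl{G}$ is normal in itself, so any subgroup of $\cl{G}$ normalizes it). By hypothesis $G$ is self-replicating and level-transitive, so the normalizer of $\cl{G}$ in $\Aut(X^*)$ contains such a subgroup. With both hypotheses of Theorem A in place, and with $\cl{G}$ already shown to be a sofic tree shift group, the ``only if'' direction of that theorem yields that $\cl{G}$ is a finitely constrained group, completing the proof.

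The main obstacle I anticipate is not any single deep step but rather the careful bookkeeping needed to invoke Theorem A with the right ambient alphabet. Theorem A is stated for subgroups of $A^{X^*}$ where $A$ acts on $X$, and one must confirm that $\cl{G}$ is being viewed as a subgroup of the full shift $\Sym(X)^{X^*}=\Aut(X^*)$ with $A=\Sym(X)$, so that the hypothesis ``the normalizer of $G$ in $A^{X^*}$ contains a self-replicating, level-transitive subgroup'' is being checked in the correct group. The subtlety is that the finitely constrained group producing $\cl{G}$ as a sofic image lives in the larger space $S_K=(\Sym(X)\times G/K)^{X^*}$, not in $\Aut(X^*)$ itself, so one should be explicit that the soficity conclusion concerns $\cl{G}\leq\Aut(X^*)$ as the image under $\Psi$, and that the level-transitivity and self-replicating property of $G$ are genuinely properties of $G$ acting inside $\Aut(X^*)$. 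Once these identifications are made transparent, the argument is a direct citation of the two quoted results and requires no further computation.
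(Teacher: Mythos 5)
Your proposal is correct and follows essentially the same route as the paper: the paper likewise realizes $\cl{G}$ as the image of the finite-type shift $\widetilde{G_K}$ under the shift-equivariant continuous map $\Psi$ (hence a sofic tree shift) and then invokes Theorem A of Penland--\v{S}uni\'c. Your explicit verification of the normalizer hypothesis (taking $G\leq\cl{G}$ itself as the self-replicating, level-transitive subgroup) is a detail the paper leaves implicit, and it is correct.
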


Note that the above says nothing about the size of defining patterns of $\overline{G}$, just that they are bounded.

\begin{theorem}[Theorem \ref{t:criterion}]
	Let $G \leq \Aut(X^*)$ be a self-similar, level-transitive, regular branch group, with maximal branching subgroup $K$. 
 	The following are equivalent:
	\begin{enumerate}
		\item $G$ has  trivial rigid kernel;
		\item $G$ branches over some level stabiliser;
		\item $K\geq \St_G(n)$ for some $n\geq 1$.
	\end{enumerate}
	
	If $G$ is in addition self-replicating, then $\cl{G}$ is a  finitely constrained group and the above items are also equivalent to 
	
	\begin{enumerate}
		\setcounter{enumi}{3}
		\item $ |X\ast G:\St_G(1)] =  |X\ast \cl{G}:\cl{\St_G(1)}]$.
	\end{enumerate}
\end{theorem}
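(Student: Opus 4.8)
The plan is to prove the equivalence of (i), (ii), (iii) first, and then bring in (iv) under the self-replicating hypothesis. The heart of the argument is the description of the rigid kernel as $\varprojlim_n \St_G(n)/\Triv_G(n)$ established earlier, so the whole theorem reduces to understanding when the quotients $\St_G(n)/\Triv_G(n)$ eventually stabilize to the trivial group.

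First I would treat (i) $\Leftrightarrow$ (iii). Recall $\Triv_G(n) = \St_G(n)\cap X^{n-1}\ast K$. The rigid kernel $\ker\psi = \varprojlim_n \St_G(n)/\Triv_G(n)$ is trivial precisely when the maps $r_{m,n}$ in the inverse system are eventually trivial, which (since $G$ is regular branch, so the system is one of finite groups with the restriction maps) happens if and only if $\St_G(n) = \Triv_G(n)$ for all sufficiently large $n$. I would unwind $\St_G(n) = \Triv_G(n) = \St_G(n)\cap X^{n-1}\ast K$, i.e. $\St_G(n)\leq X^{n-1}\ast K$. Using the recursive structure $\St_G(n) \leq X\ast \St_G(n-1)$ together with $X^{n-1}\ast K$, I would show this chain of containments collapses to the single condition $K \geq \St_G(n_0)$ for some $n_0$: once one level stabilizer sits inside $K$, branching propagates the equality to all higher levels, and conversely the vanishing of the quotients forces some $\St_G(n_0) \le K$. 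This is the main computational step and I expect it to require care with the indexing of $n$ versus $n-1$ in the definition of $\Triv_G$.

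Next, (ii) $\Leftrightarrow$ (iii) is largely a matter of definitions. If $K \geq \St_G(n)$, then since also $X\ast K \le K$ and $K \le G$ with finite index, $\St_G(n)$ itself is a finite-index normal subgroup with $X\ast \St_G(n) \le \St_G(n)$ — i.e. $G$ branches over the level stabilizer $\St_G(n)$, giving (ii). Conversely, if $G$ branches over some $\St_G(n)$, then $\St_G(n)$ is a branching subgroup, hence contained in the \emph{maximal} branching subgroup $K$ up to the known shift relation; invoking the characterization of $K$ as $\bigcap_{u}(\rst_G(u))_u$ and the result $X^m\ast K \ge \rst_G(m+n)$ from \cite{bartholdi-s-z:congruence}, I would deduce $K \ge \St_G(n')$ for some $n'$, which is (iii). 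The only delicate point is matching the level shift between ``branches over $\St_G(n)$'' and ``$K \ge \St_G(n')$''; the maximality of $K$ makes this go through.

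Finally, assume $G$ is self-replicating. By Corollary \ref{cor:cong_completion_finitely_constrained}, $\cl{G}$ is finitely constrained. For (iii) $\Leftrightarrow$ (iv), the index $[X\ast G : \St_G(1)]$ is the size of the group controlling how level-$1$ sections assemble, and the corresponding index in $\cl{G}$ is its profinite analog. Since $\psi:\widetilde{G}\to\cl{G}$ is the forgetful map killing the $(G/K)^{X^*}$-part, equality of these two indices is exactly the statement that no information is lost at the first branching step, i.e. that $\St_G(2)$ and $\Triv_G(2)$ agree in the relevant quotient; by the collapse established above this is equivalent to the vanishing of the whole rigid kernel, hence to (i). I would make this precise by computing both indices as orders of explicit finite quotients and showing their ratio is exactly the order of $\St_G(2)/\Triv_G(2)$, which is trivial iff the rigid kernel is. I expect the comparison of these two indices — tracking how the closure enlarges $X\ast G$ relative to $X\ast\cl{G}$ — to be the subtlest part of this last equivalence.
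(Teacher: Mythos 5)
Your equivalence of (ii) and (iii) is essentially fine (in fact it is simpler than you make it: any branching subgroup $L$ satisfies $v\ast L\leq \rst_G(v)$ for every $v$, hence $L\leq\bigcap_u(\rst_G(u))_u=K$ directly, with no level shift or appeal to \cite{bartholdi-s-z:congruence} needed). The genuine gap is in your treatment of (i). You reduce triviality of the rigid kernel $\varprojlim_n \St_G(n)/\Triv_G(n)$ to ``$\St_G(n)=\Triv_G(n)$ for all sufficiently large $n$'', justified only by finiteness of the groups in the system. That principle is false: an inverse system of finite groups whose transition maps have eventually trivial image (e.g.\ the constant system $\ZZ/2\ZZ$ with trivial maps) has trivial limit while no term is trivial. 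The principle would be valid if the system were \emph{surjective}, but surjectivity is precisely the nontrivial extra hypothesis of Lemma \ref{lem:surjective_system_iff_G2K=G1K} (equivalent to $\St_G(2)K\geq\St_G(1)$), which is not available here; indeed the paper's Theorem \ref{thm:rigid_kernel_surjective_system_direct_prod} obtains ``trivial kernel iff $\St_G(2)=\Triv_G(2)$'' only under that hypothesis. The correct criterion, and the one the paper uses, is cofinality of filtrations: the kernel is trivial iff for every $n$ there exists $m$ with $\St_G(m)\leq\Triv_G(n)$, i.e.\ iff $\{\Triv_G(n)\}$ and $\{\St_G(n)\}$ generate the same topology. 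With this, (i)$\Rightarrow$(iii) is just the case $n=1$ (then $\St_G(m)\leq\Triv_G(1)\leq K$), and (iii)$\Rightarrow$(i) follows from $\Triv_G(n)=X^{n-1}\ast(K\cap\St_G(1))\geq X^{n-1}\ast\St_G(s)\geq\St_G(s+n-1)$. Note that this last argument produces a containment between \emph{different} levels, never the equality $\St_G(n)=\Triv_G(n)$ that your scheme demands; so both directions of your (i)$\Leftrightarrow$(iii) break, not just one.

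Your (iv) rests on the claim that the ratio $[X\ast G:\St_G(1)]\,/\,[X\ast\cl{G}:\cl{\St_G(1)}]$ equals $|\St_G(2)/\Triv_G(2)|$. This is false, and the paper's own flagship example refutes it: for $H^{(3)}$ (the case $d=3$ of $D$) the ratio is $2^5/2=2^4$, while $\St_D(2)/\Triv_D(2)$ is elementary abelian of rank $(d-1)(d-2)=2$, i.e.\ of order $2^2$, by Lemma \ref{lem:D2modT2}. The role of finite constrainedness of $\cl{G}$ is not to control level $2$; it is to produce \emph{some} level $s$ (a priori unknown, tied to the pattern size of $\cl{G}$) at which $\cl{\St_G(s+1)}=X\ast\cl{\St_G(s)}$. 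Comparing indices at that level, as the paper does, yields $[X\ast G:\St_G(1)]=[X\ast\cl{G}:\cl{\St_G(1)}]\cdot[X\ast\St_G(s):\St_G(s+1)]$, so equality in (iv) holds iff $X\ast\St_G(s)=\St_G(s+1)$, i.e.\ iff $G$ branches over the level stabilizer $\St_G(s)$, which is condition (ii). Your ``no information is lost at the first branching step'' heuristic cannot be made precise at levels $1$ and $2$, and this part of the argument needs to be redone along the lines above.
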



\begin{proof}
	Let $G\leq \Aut(X^*)$ be a self-similar, level-transitive, regular branch group, with maximal branching subgroup $K$. 
	
	(i)$\Leftrightarrow$ (ii). The rigid kernel of $G$ is trivial if and only if $\{\Triv_G(n):n\in\NN\}$ and $\{\St_G(n):n\in \NN\} $ generate the same topology on $G$, which holds if and only if for every $n\in \NN$ there is some $m\in\NN$ such that $\Triv_G(n)\geq \St_G(m)$. 
	This is equivalent to the existence of some $s\in\NN$ such that $X\ast \St_G(s)=\St_G(s+1)$. 
	To wit, if $\Triv_G(n)\geq \St_G(m)$ then, since $G$ branches over $\Triv_G(1)$ and $K$ is the maximal branching subgroup, we have $\St_G(m)\leq K$ and thus $X\ast \St_G(m)\leq \St(m+1)\cap (X\ast K)\leq \St_G(m+1)$, so $X\ast\St_G(m)=\St_G(m+1)$. 
	Conversely, if $X\ast \St_G(s)=\St_G(s+1)$ then $G$ branches over $\St_G(s)$ and therefore $\St_G(s)\leq K$, so $\St_G(s)=\St_K(s)$ and 
	$\St_G(n+s)= X^n\ast \St_K(s)\leq X^n\ast \St_K(1)=\Triv_G(n+1)$ for every $n\in \NN$.
	
	
	(ii)$\Leftrightarrow$ (iii). If $K$ is the maximal branching subgroup, then $K\geq \St_G(n)$ for some $n\geq 1$ if and only if $\St_G(n+1)\geq X\ast \St_G(n)$.
	
	If $G$ is also assumed to be self-replicating, then Corollary \ref{cor:cong_completion_finitely_constrained} says that $\cl{G}$ is a finitely constrained group.
	In particular, by Theorem \ref{thm:ps_fincons_regbranch}, there exists some $n\geq 0$ such that
	$\cl{\St_G(n+1)}=X\ast \cl{\St_G(n)}$. 
	This implies that 
	\begin{align*}
		|X\ast G:X\ast \St_G(n)|&=|X\ast \cl{G}:X\ast \cl{\St_G(n)}|=|X\ast \cl{G}:\cl{\St_G(n+1)}|\\
		&=|X\ast \cl{G}:\cl{\St_G(1)}||\cl{\St_G(1)}:\cl{\St_G(n+1)}|\\
		&=|X\ast \cl{G}: \cl{\St_G(1)}||\St_G(1):\St_G(n+1)|.
	\end{align*} 
	At the same time, 
	
	$$|X\ast G:X\ast \St_G(n)|=\frac{|X\ast G:\St_G(n+1)|}{|X\ast \St_G(n):\St_G(n+1)|}=\frac{|X\ast G:\St_G(1)|\cdot |\St_G(n):\St_G(n+1)|}{|X\ast \St_G(n):\St_G(n+1)|}.$$
	
	Putting these together,  $|X\ast G:\St_G(1)|=|X\ast \cl{G}: \cl{\St_G(1)}|\cdot |X\ast \St_G(n):\St_G(n+1)|$, therefore  $|X\ast G:\St_G(1)|=|X\ast \cl{G}: \cl{\St_G(1)}|$ if and only if $X\ast \St_G(n)\leq \St_G(n)$. 
	This shows that items (ii) and (iv) are equivalent. 
\end{proof}

Note that the above result is not ``quantitative''.
 Namely, when the two indices in condition (iv) are not equal, we have no information on the rigid kernel other than that it is non-trivial.


\subsection{The rigid kernel}\label{subsec:rigidkernels}
For the rest of this section, fix $G\leq\Aut(X^*)$, a self-similar, level-transitive, regular branch group, with maximal branching subgroup $K$.
We will drop the $K$ notation from now on, so $\widetilde{G}=\widetilde{G_K}$ and $R=R_K$, etc. 

As stated above, the rigid kernel 
$\ker (\widetilde{G}\rightarrow \cl{G})$ is $\widetilde{G}\cap R$
where $R= \mathrm{id}\ltimes (G/K)^{X^*}$ and it is a finitely constrained group defined by patterns of size 2, by Corollary \ref{cor:ker_fs_2}.

Notice in particular that the rigid kernel inherits from $R$ any property of $G/K$ that is inherited by subgroups of Cartesian products (for instance, and this will be important for our examples, being elementary abelian).

The patterns of size 2 defining $\widetilde{G}\cap R$ are in $\St_G(2)/\Triv_G(2)\cong\St_G(2)(X\ast K)/(X\ast K)$. 
Once a root pattern is fixed, the patterns of size 2 at each $x\in X$ must be chosen so that their value at $x$ (which must be in $\{\mathrm{id}\}\times G/K$) matches the value at $x$ of the root pattern of size 2. 
This means that the pattern of size 2 at $x\in X$ is in a coset of $(\St_G(2)\cap \Triv_G(1))/\Triv_G(2)$ inside $\St_G(2)/\Triv_G(2)$; or, equivalently, in a coset of 
$(\St_G(2)(X\ast K)\cap K)/(X\ast K)$ in $\St_G(2)(X\ast K)/(X\ast K)$.
Note that  $K/(X\ast K)$ is not trivial in general so there may still be non-trivial elements in $(\St_G(2)(X\ast K)\cap K)/(X\ast K)$.


The same argument holds for all $v\in X^*\setminus\{\epsilon\}$
, so  $\widetilde{G}\cap R$  
is the inverse limit of the  extensions 
$$X^{n-2}\ast\frac{\St_G(2)\cap K}{\Triv_G(2)}\rightarrow \St_G(n)/\Triv_G(n)\rightarrow \St_G(n-1)/\Triv_G(n-1) \qquad n\geq 3.$$


\begin{lemma}\label{lem:pass_to_surjective_inverse_system}
	Let $X$ be the inverse limit of the inverse system $\{X_i, \varphi_{j,i}:X_j\rightarrow X_i \mid j\geq i\geq 1\in \NN\}$ of finite groups and homomorphisms. 
	Then $X$ is also the inverse limit of the surjective inverse system $\{Y_i, \psi_{j,i} \mid j\geq i\geq 1\}$ where $Y_i=\bigcap_{j\geq i}\im \varphi_{j,i}$ and $\psi_{j,i}=\varphi|_{Y_j}$.
\end{lemma}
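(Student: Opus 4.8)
The plan is to exploit the finiteness of the groups $X_i$ to show that, for each fixed $i$, the chain of images $\im\varphi_{j,i}$ stabilizes, and then to check the three things that the statement really asks for: that the $Y_i$ together with the restricted maps form a genuine (surjective) inverse system, and that its limit is canonically $X$. First I would record the compatibility $\varphi_{j',i}=\varphi_{j,i}\circ\varphi_{j',j}$ for $j'\geq j\geq i$, which forces $\im\varphi_{j',i}\subseteq\im\varphi_{j,i}$; thus $\{\im\varphi_{j,i}\}_{j\geq i}$ is a descending chain of subgroups of the finite group $X_i$. Finiteness gives an index $N(i)\geq i$ with $Y_i=\im\varphi_{j,i}$ for all $j\geq N(i)$. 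In particular each $Y_i$ is a subgroup of $X_i$, realized as a single stabilized image — this is the observation everything else rests on.

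Next I would verify that $\varphi_{j,i}$ restricts to a well-defined surjection $\psi_{j,i}\colon Y_j\to Y_i$ for $j\geq i$. For well-definedness, given $y\in Y_j$ and any target index $m\geq i$, I would pick $k\geq\max(j,m)$, write $y=\varphi_{k,j}(z)$ using $Y_j\subseteq\im\varphi_{k,j}$, and compute $\varphi_{j,i}(y)=\varphi_{k,i}(z)\in\im\varphi_{k,i}\subseteq\im\varphi_{m,i}$; since $m$ is arbitrary this places $\varphi_{j,i}(y)$ in $Y_i$. For surjectivity I would choose a single large $N\geq\max\{N(i),N(j)\}$ so that simultaneously $Y_i=\im\varphi_{N,i}$ and $Y_j=\im\varphi_{N,j}$; then any $w\in Y_i$ can be written $w=\varphi_{N,i}(z)=\varphi_{j,i}\bigl(\varphi_{N,j}(z)\bigr)$ with $\varphi_{N,j}(z)\in Y_j$, so $w\in\psi_{j,i}(Y_j)$. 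This shows $\{Y_i,\psi_{j,i}\}$ is a surjective inverse system.

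Finally I would identify the two limits. The inclusions $Y_i\hookrightarrow X_i$ are compatible with the transition maps (since $\psi_{j,i}$ is by definition the restriction of $\varphi_{j,i}$), so they induce an injection $\varprojlim Y_i\hookrightarrow\varprojlim X_i=X$. For surjectivity I would take any compatible family $(x_i)\in X$ and note that for every $j\geq i$ one has $x_i=\varphi_{j,i}(x_j)\in\im\varphi_{j,i}$; intersecting over all $j\geq i$ gives $x_i\in Y_i$, so $(x_i)\in\varprojlim Y_i$. Hence the inclusion is a bijection and the two inverse limits coincide. The main obstacle is the surjectivity of the $\psi_{j,i}$: it is exactly here that one must use the simultaneous stabilization of the chains at a common large index $N$, rather than treating each $i$ in isolation, and getting this bookkeeping right (that $Y_i$ is a single image $\im\varphi_{N,i}$ for large $N$) is what makes the argument go through.
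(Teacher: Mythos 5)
Your proof is correct: the paper itself dismisses this lemma as ``a straightforward exercise in the definitions of inverse limit,'' and your write-up is exactly the standard argument that exercise calls for. In particular you correctly isolate the one genuinely non-formal point — surjectivity of the restricted maps $\psi_{j,i}$, which requires the descending chains $\{\im\varphi_{j,i}\}_{j\geq i}$ to stabilize (finiteness of the $X_i$) at a common large index — while the well-definedness and the identification of the two limits are purely formal, as you show.
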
 
\begin{proof}
A straightforward exercise in the definitions of inverse limit.
	\end{proof}

\begin{theorem}\label{thm:finite_rigid_kernel_inverse_system_criterion}
	Let $G$ be a self-similar, level-transitive regular branch group with maximal branching subgroup $K$. 
	\begin{enumerate}
	\item The rigid kernel of $G$ is finite  if and only if 
	$$\bigcap_{n\geq m}(\St_G(n)\Triv_G(m))\cap K \leq \Triv_G(m) \text{ for every } m\geq 1.$$ 
	
	\item If the system $\{\St_G(m)/\Triv_G(m), r_{n,m}\}$ is surjective, then the rigid kernel is finite if and only if $\St_G(2)\cap K\leq \Triv_G(2)$, in which case the rigid kernel is isomorphic to $\St_G(1)/\Triv_G(1)$. 	
	\end{enumerate}
\end{theorem}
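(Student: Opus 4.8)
The plan is to read off both statements from the description of the rigid kernel as the inverse limit $\varprojlim_n \St_G(n)/\Triv_G(n)$ and to detect finiteness through its connecting maps. First I would apply Lemma \ref{lem:pass_to_surjective_inverse_system} to replace this limit by the surjective system $\{Y_n,\psi_{n,m}\}$ with $Y_n=\bigcap_{j\geq n}\im r_{j,n}$. Since $r_{j,n}$ is the canonical map $g\Triv_G(j)\mapsto g\Triv_G(n)$, its image is $\St_G(j)\Triv_G(n)/\Triv_G(n)$, so $Y_n=D_n/\Triv_G(n)$ with $D_n=\bigcap_{j\geq n}\St_G(j)\Triv_G(n)$. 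Being a surjective system of finite groups, its limit is finite if and only if the orders $|Y_n|$ are bounded, equivalently the connecting maps $\psi_{n,n-1}$ are eventually isomorphisms. Throughout I would use the already-established facts that $\Triv_G(n)\subseteq K$ and that $\ker r_{n,n-1}=(\St_G(n)\cap X^{n-2}\ast K)/\Triv_G(n)\cong X^{n-2}\ast\frac{\St_G(2)\cap K}{\Triv_G(2)}$.

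For the forward implication of (i), writing $\bar K_n:=(\St_G(n)\cap K)/\Triv_G(n)$ for the image of $K$, the inclusion $X^{n-2}\ast K\subseteq K$ gives $\ker r_{n,n-1}\subseteq\bar K_n$, hence $\ker\psi_{n,n-1}=Y_n\cap\ker r_{n,n-1}\subseteq Y_n\cap\bar K_n=(D_n\cap K)/\Triv_G(n)$. The hypothesis $D_n\cap K\leq\Triv_G(n)$ for every $n$ therefore makes every $\psi_{n,n-1}$ injective, hence (by surjectivity) an isomorphism, so $\varprojlim Y_n\cong Y_1$ is finite.

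The substantive direction, and the main obstacle, is the converse of (i): if the condition fails at some level $m$, I must produce infinitely many elements. I would fix $g\in(D_m\cap K)\setminus\Triv_G(m)$ and propagate it by the branch embedding $h\mapsto v\ast h$, which is an injective homomorphism. The key computations are that $g\in K$ forces $v\ast g\in X^{|v|}\ast K\subseteq K$; that $v\ast g\in D_{m+|v|}$, obtained by rewriting each decomposition $g=st$ with $s\in\St_G(j),\,t\in\Triv_G(m)$ and using $v\ast\Triv_G(m)\subseteq\Triv_G(m+|v|)$ together with $v\ast s\in\St_G(j+|v|)$; and that $v\ast g\notin\Triv_G(m+|v|)$, which follows from injectivity of $h\mapsto v\ast h$ and $g\notin\Triv_G(m)$. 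As $v$ ranges over $X^{t}$ the elements $v\ast g$ have pairwise disjoint supports, and since $\Triv_G(m+t)=X^{m+t-1}\ast(\St_G(1)\cap K)$ respects that decomposition, their classes in $Y_{m+t}$ are pairwise distinct; hence $|Y_{m+t}|\geq|X|^{t}\to\infty$ and the rigid kernel is infinite. The technical heart is precisely verifying these membership and distinctness claims, i.e. tracking how $h\mapsto v\ast h$ interacts with $\St_G$, $\Triv_G$ and $K$.

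Finally, for (ii) the surjectivity hypothesis gives $Y_n=\St_G(n)/\Triv_G(n)$ with no passage to stable images, so boundedness of $|Y_n|$ is governed by the extension kernels $\ker r_{n,n-1}\cong X^{n-2}\ast\frac{\St_G(2)\cap K}{\Triv_G(2)}$. These vanish for all $n\geq 3$ exactly when $\St_G(2)\cap K\leq\Triv_G(2)$, which is therefore equivalent to finiteness. Under that condition the system stabilizes from level $2$, and the remaining map $r_{2,1}$ has kernel $(\St_G(2)\cap\Triv_G(1))/\Triv_G(2)=(\St_G(2)\cap K)/\Triv_G(2)=1$ (using $\Triv_G(1)=\St_G(1)\cap K$), so the rigid kernel is isomorphic to $\St_G(1)/\Triv_G(1)$.
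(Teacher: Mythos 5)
Your proposal is correct, and it shares the paper's scaffolding --- Lemma \ref{lem:pass_to_surjective_inverse_system}, the stable images $Y_n=\bigl(\bigcap_{j\geq n}\St_G(j)\Triv_G(n)\bigr)/\Triv_G(n)$, detection of finiteness of a surjective inverse limit of finite groups through its connecting maps, and an essentially identical treatment of part (ii) --- but it closes the equivalence in (i) by a genuinely different route. The paper argues by biconditionals throughout: using the self-similarity identity $\Triv_G(i)\cap\bigcap_{k\geq j}\St_G(k)\Triv_G(j)=X^{i-1}\ast\bigl(K\cap\bigcap_{k\geq j}\St_G(k-i+1)\Triv_G(j-i+1)\bigr)$ together with $\Triv_G(j)=X^{i-1}\ast\Triv_G(j-i+1)$ (i.e.\ sectioning elements \emph{down} to level $i-1$), it shows that triviality of the kernel of the restricted map $r_{j,i}$ depends only on the difference $j-i$, so the criterion ``there exists $i$ with all restricted $r_{j,i}$, $j\geq i$, isomorphisms'' translates at once into the displayed condition for every $m\geq 1$. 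You instead split the two directions: your forward implication is the same kernel-containment observation, but your converse is a direct construction, propagating a witness $g\in (D_m\cap K)\setminus\Triv_G(m)$ \emph{up} the tree via $h\mapsto v\ast h$ to produce $|X|^t$ distinct classes in $Y_{m+t}$, hence an infinite limit. Your route is more constructive and quantitative (explicit lower bounds $|Y_{m+t}|\geq |X|^{t}$, anticipating the Cartesian-product structure in Theorem \ref{thm:rigid_kernel_surjective_system_direct_prod}), at the price of more membership bookkeeping; the paper's route packages both directions into one subgroup identity and never needs a cardinality argument. Two of the verifications you deferred deserve explicit care when writing up: (a) to place $v\ast s$ in $\St_G(j+|v|)$ you must first check $v\ast s\in G$, which fails for arbitrary $s\in\St_G(j)$; it holds here because $g\in K$ and $t\in\Triv_G(m)\subseteq K$ force $s=gt^{-1}\in K$, whence $v\ast s\in X^{|v|}\ast K\leq K\leq G$; and (b) the claim $v\ast g\notin\Triv_G(m+|v|)$ follows not from injectivity of $h\mapsto v\ast h$ but from the fact that the shift at $v$ carries $\Triv_G(m+|v|)=X^{m+|v|-1}\ast(\St_G(1)\cap K)$ into $\Triv_G(m)$ --- the same fact underlying your distinctness claim. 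With those two points made explicit, every step checks out.
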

\begin{proof}
	Using Lemma \ref{lem:pass_to_surjective_inverse_system}, we can replace each $\St_G(i)/\Triv_G(i)$ by $H_i:=\bigcap_{k\geq i}\St_G(k)\Triv_G(i)/\Triv_G(i)$ and each $r_{j,i}$ by its restriction to $H_j$ (which we still denote by $r_{j,i}$). 
	The rigid kernel is finite if and only if there exists $i\geq 1$ such that $r_{j,i}$ is an isomorphism for each $j\geq i$, which occurs if and only if the kernel of $r_{j,i}$ is trivial. 
	This is equivalent to $\Triv_G(i)\cap \bigcap_{k\geq j}\St_G(k)\Triv_G(j)=\Triv_G(j)$, which occurs if and only if the left-hand side is contained in the right-hand side, as the other containment holds always. 
	Now, 
	\begin{multline*}
		\Triv_G(i)\cap \bigcap_{k\geq j}\St_G(k)\Triv_G(j) \\
		= X^{i-1}\ast (K\cap \St_G(1))\cap \bigcap_{k\geq j}\left(G\cap X^{i-1}\ast (\St_G(k-i+1)\Triv_G(j-i+1))\right) \\
		=X^{i-1}\ast (K\cap \bigcap_{k\geq j}\St_G(k-i+1)\Triv_G(j-i+1)) 
	\end{multline*}
	and similarly, $\Triv_G(j)=X^{i-1}\ast \Triv_G(j-i+1)$. 
	So the rigid kernel is finite if and only if, there exists $i\geq 1$ such that 
	$$X^{i-1}\ast (K\cap \bigcap_{k\geq j}\St_G(k-i+1)\Triv_G(j-i+1)) \leq X^{i-1}\ast \Triv_G(j-i+1)$$
	for every $j\geq i$. 
	But this occurs exactly when $$(K\cap \bigcap_{k\geq j}\St_G(k-i+1)\Triv_G(j-i+1)) \leq\Triv_G(j-i+1).$$
	This makes $j-i$ the only variable, so that the rigid kernel is finite if and only if 
	\begin{equation}\label{eq:finite_rigid_kernel}
		(K\cap \bigcap_{n\geq m}\St_G(n)\Triv_G(m)) \leq\Triv_G(m) \text{ for every } m\geq 1.
	\end{equation}
	
	If the system $\{\St_G(m)/\Triv_G(m), r_{n,m}\}$ is surjective, then $\St_G(n)\Triv_G(m)=\St_G(m)$ for every $n\geq m\geq 1$, so \eqref{eq:finite_rigid_kernel} becomes $K\cap \St_G(m)\leq \Triv_G(m)$ for every $m\geq 1$. 
	In particular, $K\cap \St_G(2)=\Triv_G(2)$. 
	Now, suppose that the inverse system is surjective and that $K\cap \St_G(2)=\Triv_G(2)$. 
	Then, for each $m\geq 2$, the kernel of $r_{m,m-1}$ is 
	$$\frac{\St_G(m)\cap \Triv_G(m-1)}{\Triv_G(m)}=\frac{X^{m-2}\ast (\St_G(2)\cap K)}{\Triv_G(m)}=\frac{X^{m-2}\ast \Triv_G(2)}{\Triv_G(m)} =\frac{\Triv_G(m)}{\Triv_G(m)}$$
	where the second-to-last equality holds because $K\cap \St_G(2)=\Triv_G(2)$.
	This means, that $r_{m,m-1}$ is an isomorphism, for each $m\geq 2$, and therefore the rigid kernel is isomorphic to $\im(r_{m,1})=\St_G(1)/\Triv_G(1)$, which is finite, as required. 	
\end{proof}

\begin{lemma}\label{lem:surjective_system_iff_G2K=G1K}
	The inverse system $\{\St_G(n)/\Triv_G(n), r_{m,n} : m\geq n\geq 1\in \NN \}$ is surjective if and only if $\St_G(2)K \geq \St_G(1)$.
\end{lemma}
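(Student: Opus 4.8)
The plan is to reduce the assertion about the whole inverse system to a single cofinal equality and then bootstrap that equality up the levels of the tree using self-similarity. First I would record what surjectivity of the system means concretely. Since every bonding map factors as a composite of the consecutive maps $r_{n+1,n}$, the system $\{\St_G(n)/\Triv_G(n), r_{m,n}\}$ is surjective if and only if each $r_{n+1,n}$ is surjective. As the image of $r_{n+1,n}$ is $\St_G(n+1)\Triv_G(n)/\Triv_G(n)$ and $\St_G(n+1)\Triv_G(n)\leq\St_G(n)$ always holds, surjectivity of $r_{n+1,n}$ is exactly the equality
$$\St_G(n+1)\Triv_G(n)=\St_G(n).$$
So the goal becomes: this equality holds for all $n\geq 1$ if and only if $\St_G(2)K\geq\St_G(1)$.

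For the base case $n=1$ I would use that $\Triv_G(1)=\St_G(1)\cap K$ and $\St_G(2)\leq\St_G(1)$, so Dedekind's modular law gives $\St_G(2)(\St_G(1)\cap K)=\St_G(2)K\cap\St_G(1)$. Hence $\St_G(2)\Triv_G(1)=\St_G(1)$ if and only if $\St_G(1)\leq\St_G(2)K$, that is, if and only if $\St_G(2)K\geq\St_G(1)$. This already settles one direction, since a surjective system forces $r_{2,1}$ to be surjective and hence yields the condition; it also reduces the converse to showing that the single level-one equality $\St_G(2)\Triv_G(1)=\St_G(1)$ propagates to all levels.

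The induction step is where self-similarity enters. Assuming $\St_G(n+1)\Triv_G(n)=\St_G(n)$ for some $n\geq 1$, I take $g\in\St_G(n+1)\leq\St_G(1)$ and write its first-level sections $g=(g_0,\dots,g_{d-1})$, so that each $g_x\in\St_G(n)$. By the inductive hypothesis I factor $g_x=s_x t_x$ with $s_x\in\St_G(n+1)$ and $t_x\in\Triv_G(n)=X^{n-1}\ast\Triv_G(1)$, and then set $t:=\prod_{x\in X}x\ast t_x$. Because $G$ branches over $\Triv_G(1)$, one has $X^n\ast\Triv_G(1)=\Triv_G(n+1)\leq G$, so $t\in\Triv_G(n+1)$; consequently $s:=gt^{-1}\in G$ lies in $\St_G(1)$ with sections $s_x=g_x t_x^{-1}\in\St_G(n+1)$, whence $s\in\St_G(n+2)$. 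Thus $g=st\in\St_G(n+2)\Triv_G(n+1)$, giving the next-level equality. Chaining these implications, $\St_G(2)K\geq\St_G(1)$ yields surjectivity of the entire system.

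The main obstacle is this reassembly step: the factorizations $g_x=s_x t_x$ are produced independently in each coordinate, and I must check that they patch to a genuine factorization of $g$ inside $G$, not merely in a product of copies of $G$. The feature that makes this legitimate is precisely that $G$ branches over $\Triv_G(1)$, which places the assembled element $t=\prod_{x}x\ast t_x$ in $G$ (indeed in $\Triv_G(n+1)$); once $t\in G$, the complementary factor $s=gt^{-1}$ is automatically in $G$ and, being a product of elements of $\St_G(1)$, carries the prescribed sections. In writing this up I would be careful to invoke that the first-level section map is a homomorphism on $\St_G(1)$ and that $\St_G(n+2)=\{h\in\St_G(1): h_x\in\St_G(n+1)\text{ for all }x\in X\}$, so that the conclusions $t\in\Triv_G(n+1)$ and $s\in\St_G(n+2)$ are rigorous.
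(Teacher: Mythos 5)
Your proof is correct and follows essentially the same route as the paper's: both identify the image of the bonding maps as $\St_G(m)\Triv_G(n)/\Triv_G(n)$, reduce everything to the level-one equality $\St_G(2)\Triv_G(1)=\St_G(2)K\cap\St_G(1)$ via Dedekind's modular law, and propagate it up the tree using that $G$ branches over $\Triv_G(1)$. The only difference is packaging: where you induct one level at a time with an element-wise section decomposition and reassembly, the paper performs the same step for all $n$ at once through the subgroup identity $\St_G(n+1)\Triv_G(n)=G\cap X^{n-1}\ast(\St_G(2)\Triv_G(1))$.
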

\begin{proof}
	For every $m\geq n\geq 1$ the image of $r_{m,n}$ is $\St_G(m)\Triv_G(n)/\Triv_G(n)$.
	Observe that 
	\begin{multline}\label{eq:surjective system}
		\St_G(n+1)\Triv_G(n) = (G\cap X^{n-1}*\St_G(2))(X^{n-1}\ast \Triv_G(1)) \\
		=G\cap X^{n-1}\ast(\St_G(2)\Triv_G(1))=G\cap X^{n-1} \ast (\St_G(1)\cap\St_G(2)K) \\
		\leq G\cap X^{n-1}\ast \St_G(1)=\St_G(n)
	\end{multline}
	where the third equality follows from $\Triv_G(1)=\St_G(1)\cap K$. 
	If the inverse system is surjective, then, in particular, $\St_G(2)\Triv_G(1)=\St_G(1)\cap \St_G(2)K=\St_G(1)$, so $\St_G(2)K\geq \St_G(1)$.
	Conversely, if $\St_G(2)K\geq \St_G(1)$ then we obtain equalities in \eqref{eq:surjective system}. 
	Inductively, this implies that $\St_G(m)\Triv_G(n)=\St_G(n)$ for every $m\geq n\geq 1$. 
\end{proof}

\begin{theorem}[Theorem \ref{thm:rigid_kernel_surjective_system_direct_prod}]
	Let $G$ be a self-similar, level-transitive regular branch group with maximal branching subgroup $K$. 
	Suppose that $\St_G(2)K\geq \St_G(1)$. 
	\begin{enumerate}
		\item The rigid kernel of $G$ is trivial if and only if $\St_G(2)=\Triv_G(2)$. 
		\item Suppose moreover that $G/K$ is in a class of groups that is closed under subgroups, quotients and direct products, in which all short exact sequences that are in the class split as direct products (for example, elementary abelian groups).
		Then the rigid kernel is  $$\Gamma=\frac{\St_G(2)}{\Triv_G(2)}\times \prod_{X^*\setminus\{\epsilon\}} \frac{\St_G(2)\cap K}{\Triv_G(2)}$$
		and it is infinite if and only if $\dfrac{\St_G(2)\cap K}{\Triv_G(2)}$ is non-trivial.
	\end{enumerate}
\end{theorem}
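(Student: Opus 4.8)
The plan is to work throughout with the surjective inverse system $\{Q_n:=\St_G(n)/\Triv_G(n),\,r_{m,n}\}$ — surjective, by Lemma~\ref{lem:surjective_system_iff_G2K=G1K}, precisely because of the standing hypothesis $\St_G(2)K\geq\St_G(1)$ — so that the rigid kernel $\Gamma_0:=\varprojlim_n Q_n$ surjects onto every $Q_n$. Carrying over from the preceding theorem the identities $\St_G(n)\cap\Triv_G(n-1)=X^{n-2}\ast(\St_G(2)\cap K)$ and $\Triv_G(n)=X^{n-2}\ast\Triv_G(2)$, one gets $\ker r_{n,n-1}\cong X^{n-2}\ast B$ for $n\geq2$, where $B:=(\St_G(2)\cap K)/\Triv_G(2)$. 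This already yields part (i): the forward direction holds because a trivial $\Gamma_0$ has trivial quotient $Q_2$; conversely, if $\St_G(2)=\Triv_G(2)$ then also $\St_G(2)\cap K=\Triv_G(2)$, every $\ker r_{n,n-1}$ is trivial, and the preceding theorem gives $\Gamma_0\cong\St_G(1)/\Triv_G(1)$ — which is trivial because $\St_G(2)=\Triv_G(2)\leq K$ forces $\St_G(1)\leq\St_G(2)K=K$, hence $\St_G(1)=\St_G(1)\cap K=\Triv_G(1)$.

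For part (ii), the preceding theorem shows $\Gamma_0$ is finite exactly when $B$ is trivial, while $\Gamma$ is infinite exactly when $B$ is nontrivial; so it suffices to prove $\Gamma_0\cong\Gamma$. The structural engine is that $\Gamma_0$ is a closed self-similar group (namely $\widetilde{G}\cap R$ with $R=\mathrm{id}\ltimes(G/K)^{X^*}$) which, by Corollary~\ref{cor:ker_fs_2} and Theorem~\ref{thm:ps_fincons_regbranch}, is a finitely constrained group of patterns of size $2$. Since elements of $R$ act trivially on the tree, every level stabiliser is the whole group, and the relevant filtration is $L_n:=\ker(\Gamma_0\twoheadrightarrow Q_n)=\{\xi:\xi|_{X^{<n}}\equiv\mathrm{id}\}$; by Theorem~\ref{thm:ps_fincons_regbranch}, $\Gamma_0$ is regular branch over $L_1$. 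The layer quotients are $L_m/L_{m+1}\cong X^{m-1}\ast B$ and $L_1/L_2\cong B$, and — crucially — the branch relation reads $L_2=X\ast L_1\cong(L_1)^{X}$, realised as an internal direct product of the $d$ disjoint-support shifts.

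All the groups in sight lie in the class $\mathcal{C}$ containing $G/K$: the section map $g\Triv_G(n)\mapsto(g_uK)_{u\in X^{<n}}$ embeds $Q_n$ into $(G/K)^{X^{<n}}$ and $\Gamma_0$ into $(G/K)^{X^*}$, so closure under subgroups and products puts $\Gamma_0$, $Q_2$, $L_1$, $L_2$ and $B$ in $\mathcal{C}$. Hence $1\to L_2\to\Gamma_0\to Q_2\to1$ splits as a direct product, giving $\Gamma_0\cong Q_2\times L_2$, and $1\to L_2\to L_1\to B\to1$ splits, giving $L_1\cong B\times(L_1)^{X}$. I would then fix one homomorphic section $s\colon B\to L_1$, transport it along the shifts via $w\ast s(B)\leq X^{|w|}\ast L_1=L_{|w|+1}$ for $w\in X^*$, and check that $(b_w)_{w}\mapsto\prod_w w\ast s(b_w)$ (ordered by increasing length, convergent since $w\ast s(b_w)\in L_{|w|+1}\to1$) is an isomorphism $\prod_{X^*}B\xrightarrow{\sim}L_1$, by induction along $\{L_m\}$ whose layers are $\prod_{X^{m-1}}B$. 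This gives $L_2=X\ast L_1\cong\prod_{X^*\setminus\{\epsilon\}}B$, and therefore $\Gamma_0\cong Q_2\times\prod_{X^*\setminus\{\epsilon\}}B=\Gamma$.

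The main obstacle is this last identification: the tower of extensions splits only level by level, and an uncoordinated choice of splittings need not respect the bonding maps, so it will not descend to the inverse limit. The resolution is to make a \emph{single} choice of section $s$ and to propagate it coherently through the self-similar structure using $L_2=X\ast L_1$; the delicate point to verify is that the transported sections glue to a continuous isomorphism — in particular that the infinite ordered product is a genuine homomorphism, which relies on the extensions in $\mathcal{C}$ splitting as direct products so that the shifted copies of $B$ combine multiplicatively rather than merely filtering the group.
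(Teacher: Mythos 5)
Your proof is correct in substance but follows a genuinely different route from the paper's. The paper never leaves the finite quotients: writing $Q_n=\St_G(n)/\Triv_G(n)$ and $B=(\St_G(2)\cap K)/\Triv_G(2)$, it splits each finite extension $1\to X^{n-2}\ast B\to Q_n\to Q_{n-1}\to 1$ as a direct product (each $Q_n$ lies in the class because it embeds in the \emph{finite} power $(G/K)^{X^{<n}}$), deduces recursively that $Q_n\cong Q_2\times\prod_{m=1}^{n-2}X^m\ast B$ with the bonding maps becoming coordinate projections, and takes the inverse limit. In particular, the ``main obstacle'' you flag is not an obstacle for that route: at each stage the new splitting is only used to split off the newest layer, while the identification of the quotient with a product is inherited from the previous stage, so the level-by-level splittings cohere automatically and do descend to the limit. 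Your route instead splits just two extensions, but of \emph{profinite} groups ($\Gamma_0$ over $Q_2$, and $L_1$ over $B$), and propagates a single section $s\colon B\to L_1$ through the self-similar structure $L_2=X\ast L_1$; this buys an explicit coordinatization $L_1\cong\prod_{X^*}B$, and your deferred verification does go through: two copies $w\ast s(B)$ and $v\ast s(B)$ commute either because multiplication in $R$ is pointwise and their supports are disjoint, or, when $v=wu$ with $u\neq\epsilon$, because $[s(B),L_2]=1$ by the internal direct-product splitting and the shift $w\ast(-)$ is a homomorphism preserving this relation; hence the ordered product is a homomorphism on finitely supported families and extends by continuity, and bijectivity follows by the recursive expansion you indicate. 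The price of your approach is the reading of the hypothesis on the class: you need $\Gamma_0$, $L_1$, $L_2$ to lie in $\mathcal{C}$, which requires closure under \emph{arbitrary} Cartesian powers and splitting of short exact sequences whose middle term is infinite (profinite), whereas the paper only ever invokes the hypothesis for finite groups. This stronger reading is harmless in the motivating case of elementary abelian groups, where arbitrary products and complements of subspaces are available, but it should be stated explicitly as an assumption if you intend the argument at the stated level of generality.
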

\begin{proof}
	According to Lemma \ref{lem:surjective_system_iff_G2K=G1K}, the inverse system $\{\St_G(i)/\Triv_G(i), r_{j,i}\}$ is surjective. 
	\begin{enumerate}
		\item The rigid kernel is trivial if and only if $\St_G(n)=\Triv_G(n)$ for every $n\geq 1$. 
	Conversely, suppose that $\St_G(2)=\Triv_G(2)$. 
	Then $\St_G(2)\cap K=\Triv_G(2)$, so Theorem \ref{thm:finite_rigid_kernel_inverse_system_criterion} implies that the rigid kernel is isomorphic to $\St_G(1)/\Triv_G(1)$, which is trivial because it is the image of the trivial quotient $\St_G(2)/\Triv_G(2)$.
		\item For every $n\geq 2$, we have the following short exact sequence:
			$$ 1 \rightarrow \frac{\St_G(n)\cap \Triv_G(n-1)}{\Triv_G(n)} \rightarrow \frac{\St_G(n)}{\Triv_G(n)}\rightarrow \frac{\St_G(n-1)}{\Triv_G(n-1)}\rightarrow 1$$
			where $\frac{\St_G(n)\cap \Triv_G(n-1)}{\Triv_G(n)}=X^{n-2}\ast\left(\frac{\St_G(2)\cap K}{\Triv_G(2)}\right)$. 
			Since $\St_G(n)/\Triv_G(n)\leq \prod_{X^{<n}}G/K$, the assumption on $G/K$ implies that the above short exact sequence splits as  a direct product. 
			Inductively, this means that 
			$$\frac{\St_G(n)}{\Triv_G(n)}=\frac{\St_G(2)}{\Triv_G(2)}\times \prod_{\epsilon\neq x\in X^{<n}}x\ast \frac{\St_G(2)\cap K}{\Triv_G(2)}$$
			for every $n\geq 2$. 
			Taking the inverse limit gives the result. 
	\end{enumerate} 
\end{proof}

For all self-similar examples in the literature where the congruence subgroup problem has been considered, the inverse system  $\{\St_G(n)/\Triv_G(n), r_{m,n} : m\geq n\geq 1\in \NN \}$  turns out not to be surjective, except for the Hanoi Towers group $H^{(3)}$, as will be seen in the next section. 
In all the other cases, the rigid kernel is trivial anyway because $K\geq \St_G(n)$ for some $n$. 
It would be interesting to have examples of self-similar branch groups with non-trivial rigid kernel and such that the inverse system  $\{\St_G(n)/\Triv_G(n), r_{m,n} : m\geq n\geq 1\in \NN \}$ is not surjective.

\section{An infinite family of Hanoi-like groups}\label{sec:3}

We analyse, for odd $d$, the group $D=\langle a_0, \dots, a_{d-1} \rangle$ and prove the items in Theorem \ref{t:list}.

\begin{notation}
		All arithmetic operations on the indices are done modulo $d$.
\end{notation}

\subsection{Element decomposition and the contracting property}

The following are straightforward to verify. 

\begin{lemma}\label{lem:dihedral_relations}
\begin{alignat*}{2}
 \mu_i(x)  &= 2i-x \qquad  \rho^i(x) &&= i+x \\
 \mu_i\mu_j & = \rho^{2(i-j)} \qquad  \rho^i \rho^j &&= \rho^{i+j} \\
 \mu_i\rho^j & = \mu_{i-j/2}  \qquad \rho^j \mu_i &&= \mu_{i+j/2} \\
 \mu_i\rho^j\mu_i & = \rho^{-j}  \qquad \rho^{-j} \mu_i \rho^j &&= \mu_{i-j} \\
\end{alignat*}
\begin{align*}
 \mu_{i_m} \mu_{i_{m-1}} \dots \mu_{i_2} \mu_{i_1} &= 
  \begin{cases}
     \rho^{2(i_m-i_{m-1}+i_{m-2}-i_{m-3} + \dots + i_2 - i_1)}, 
      & m \textup{ even} \\
     \mu_{i_m-i_{m-1}+i_{m-2}-i_{m-3} + \dots -i_2 + i_1}, & m \textup{ odd}  
  \end{cases} \\
 \mu_{i_m} \mu_{i_{m-1}} \dots \mu_{i_2} \mu_{i_1}(x) &= 
  \begin{cases}
     2(i_m-i_{m-1}+i_{m-2}-i_{m-3} + \dots + i_2 - i_1)+x, 
      & m \textup{ even} \\
     2(i_m-i_{m-1}+i_{m-2}-i_{m-3} + \dots -i_2 + i_1)-x, & m \textup{ odd}  
  \end{cases} \\  
\end{align*}
\end{lemma}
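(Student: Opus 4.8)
The plan is to reduce everything to the affine description of the dihedral action on $\ZZ/d\ZZ$. Since $d$ is odd, $2$ is invertible modulo $d$, so the symbol $j/2$ appearing in the statement is unambiguous, meaning $j \cdot 2^{-1}$; I would record this first, as it is the only point genuinely using that $d$ is odd. The two formulas on the first line, $\mu_i(x) = 2i - x$ and $\rho^i(x) = i + x$, I would verify directly from the definitions: $\rho = (0\,1\,\cdots\,d-1)$ adds $1$ modulo $d$, and the reflection $\mu_i$ fixes $i$ while swapping $i+k$ with $i-k$, i.e. it sends $x = i + (x-i)$ to $i - (x-i) = 2i - x$.

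With these in hand, I would note that every element of $D(d)$ acts on $\ZZ/d\ZZ$ as an affine map $x \mapsto \pm x + c$, rotations being $x \mapsto x + c$ and reflections $x \mapsto -x + c$; concretely $\rho^j$ is $x \mapsto x + j$ and $\mu_i$ is $x \mapsto -x + 2i$. Because the action of $D(d)$ on $\ZZ/d\ZZ$ is faithful, two elements coincide exactly when they induce the same affine map, so each identity in the middle two lines can be checked by composing the relevant affine maps. For instance $\mu_i\rho^j$ sends $x \mapsto x + j \mapsto -(x+j) + 2i = -x + 2(i - j/2)$, which is precisely the map of $\mu_{i-j/2}$; the remaining relations $\mu_i\mu_j = \rho^{2(i-j)}$, $\rho^j\mu_i = \mu_{i+j/2}$, $\mu_i\rho^j\mu_i = \rho^{-j}$ and $\rho^{-j}\mu_i\rho^j = \mu_{i-j}$ are entirely analogous one- or two-line compositions.

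For the two block-display formulas I would argue by induction on $m$, peeling off the leftmost factor. The base cases $m = 1$ (giving $\mu_{i_1}$) and $m = 2$ (giving $\rho^{2(i_2 - i_1)}$, which is the first middle relation) are immediate. For the inductive step, when $m$ is even the accumulated product is a rotation $\rho^{2s}$ with $s = i_m - i_{m-1} + \cdots - i_1$, and left-multiplication by $\mu_{i_{m+1}}$ turns it into $\mu_{i_{m+1} - s}$ via $\mu_i\rho^{2s} = \mu_{i - s}$, yielding the odd formula; when $m$ is odd the product is a reflection $\mu_s$ with $s = i_m - i_{m-1} + \cdots + i_1$, and left-multiplication by $\mu_{i_{m+1}}$ gives $\rho^{2(i_{m+1} - s)}$ via $\mu_i\mu_s = \rho^{2(i-s)}$, yielding the even formula. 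The alternating-sign bookkeeping is exactly what the two middle relations supply. Finally, the two formulas for the action on $x$ follow by applying the just-proved product formulas to $x$ together with the first line.

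The computation is genuinely routine, so I do not expect a serious obstacle; the only things to watch are the invertibility of $2$ modulo $d$ (and hence the need for $d$ odd) underlying every occurrence of $j/2$, and keeping the alternating signs consistent across the two parity cases in the induction.
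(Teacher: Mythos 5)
Your proof is correct, and it fills in exactly the verification the paper intends: the paper states this lemma without proof (prefacing it only with ``straightforward to verify''), and your reduction to the affine maps $\rho^j\colon x\mapsto x+j$, $\mu_i\colon x\mapsto 2i-x$ on $\ZZ/d\ZZ$, followed by induction on $m$, is the natural way to carry out that check. Your attention to the two points of care --- that $j/2$ means $j\cdot 2^{-1}$ (requiring $d$ odd) and that the composition convention is ``right factor acts first,'' which is consistent with the paper's $\mu_i\mu_j=\rho^{2(i-j)}$ --- is exactly right.
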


\begin{lemma}\label{lem:section_decompos_elts_hanoihedral}
Let $g = a_{i_m} a_{i_{m-1}} \dots a_{i_2} a_{i_1}$. The root permutation of $g$ is 
\[
 g_{(\epsilon)} = \mu_{i_m} \mu_{i_{m-1}} \dots \mu_{i_2} \mu_{i_1} = 
  \begin{cases}
     \rho^{2(i_m-i_{m-1}+i_{m-2}-i_{m-3} + \dots + i_2 - i_1)}, 
      & m \textup{ even} \\
     \mu_{i_m-i_{m-1}+i_{m-2}-i_{m-3} + \dots -i_2 + i_1}, & m \textup{ odd}  
  \end{cases}
\]
The following table gives the position to which each $a_{i_j}$ is contributed in the first level decomposition of $g$:
\[
 \begin{array}{cl}
 a_{i_j} & \textup{contributed to position} \\
 \hline
 a_{i_1} & i_1 \\ 
 a_{i_2} & 2i_1 -i_2 \\ 
 a_{i_3} & 2i_1 -2i_2 +i_3 \\  
 a_{i_4} & 2i_1 -2i_2 +2i_3- i_4 \\   
 \dots
 \end{array}
\]
If $a_{i_k}$ and $a_{i_j}$, with $k>j$ happen to be contributed to the same position, then $a_{i_k}$ appears to the left of $a_{i_j}$ (just as in $g$) in the corresponding section. More formally, 
\begin{align*}
 g &= \prod_{j=m}^1  a_{i_j}  = \prod_{j=m}^1  \mu_{i_j} \delta_{i_j}( a_{i_j}) = 
 \prod_{j=m}^1 \mu_{i_j} \cdot  \prod_{j=m}^1  \delta_{\mu_{i_1}\mu_{i_2} \dots \mu_{i_{j-1}} (i_j)}(a_{i_j}) \\
 &= \prod_{j=m}^1 \mu_{i_j} \cdot \prod_{j=m}^1  \delta_{2(i_1-i_2+i_3-i_4 +\dots+(-1)^j i_{j-1}) +(-1)^{j+1} i_j}(a_{i_j}), 
\end{align*}
where all the products should be written from left to right with decreasing indices. 
\end{lemma}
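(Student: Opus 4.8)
The plan is to split each generator into its root permutation and its single nontrivial first-level section, and then collect all the root permutations on the left of the word while tracking how the sections are displaced. By definition $a_i = \mu_i\,\delta_i(a_i)$, so I would first write
\[
g = a_{i_m}\cdots a_{i_1} = \mu_{i_m}\delta_{i_m}(a_{i_m})\,\mu_{i_{m-1}}\delta_{i_{m-1}}(a_{i_{m-1}})\cdots\mu_{i_1}\delta_{i_1}(a_{i_1}).
\]
The root permutation is then visibly $g_{(\epsilon)} = \mu_{i_m}\cdots\mu_{i_1}$, and the stated dichotomy (a rotation when $m$ is even, a mirror when $m$ is odd) is an immediate substitution into the product formula of the preceding lemma.

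The key tool is the commutation of a root permutation $\mu$ with a section placement: since $\mu$ merely permutes the level-one subtrees, $\mu\,\delta_k(h) = \delta_{\mu(k)}(h)\,\mu$, and because each $\mu_i$ is an involution this is the same as $\delta_k(h)\,\mu = \mu\,\delta_{\mu(k)}(h)$. I would use the latter to move each section rightward past the root permutations lying to its right, equivalently to gather $\mu_{i_m}\cdots\mu_{i_1}$ at the front. For the section $\delta_{i_j}(a_{i_j})$ the root permutations originally to its right are precisely $\mu_{i_{j-1}},\ldots,\mu_{i_1}$; moving it past them in order, nearest first, sends its coordinate successively through $\mu_{i_{j-1}}$, then $\mu_{i_{j-2}}$, and so on, landing it at $\mu_{i_1}\mu_{i_2}\cdots\mu_{i_{j-1}}(i_j)$. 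Evaluating this composite mirror with the preceding lemma—after reindexing so that the outermost factor is $\mu_{i_1}$, which is the one place the sign convention must be watched—gives the closed form $2(i_1 - i_2 + \cdots + (-1)^j i_{j-1}) + (-1)^{j+1}i_j$, and the special cases $j = 1,2,3,4$ reproduce the table exactly.

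For the final ordering assertion I would note that in this rewriting the sections are moved only past root permutations and never past one another, so their global left-to-right order is unchanged from $g$. Consequently $\delta_{i_k}(a_{i_k})$ with $k>j$ stays to the left of $\delta_{i_j}(a_{i_j})$, and whenever two sections are assigned the same coordinate they multiply there in that same order, with $a_{i_k}$ on the left. I expect no genuine obstacle: the argument is essentially bookkeeping, and the only points demanding care are matching the sign conventions of the preceding lemma when evaluating $\mu_{i_1}\cdots\mu_{i_{j-1}}(i_j)$ and confirming that gathering the $\mu$'s to the left truly preserves, rather than reverses, the relative order of the sections.
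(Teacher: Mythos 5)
Your proof is correct and is exactly the verification the paper intends: the paper states this lemma without proof (filing it under ``straightforward to verify''), and its ``More formally'' display already encodes precisely the commutation bookkeeping you carry out, namely repeatedly swapping adjacent pairs $\delta_k(h)\,\mu \to \mu\,\delta_{\mu(k)}(h)$ so that sections never pass one another. Your key relation $\mu\,\delta_k(h)=\delta_{\mu(k)}(h)\,\mu$ is the right one for the paper's convention $fg(u)=f(g[u])g(u)$, and your reindexed application of the preceding lemma (outermost factor $\mu_{i_1}$, argument $i_j$, with the parity of $j-1$ deciding the case) yields the stated closed form $2(i_1-i_2+\dots+(-1)^j i_{j-1})+(-1)^{j+1}i_j$.
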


\begin{lemma}
The group $D$ is contracting, with nucleus $\{1,a_0,\dots,a_{d-1}\}$. Moreover,  for all $g \in D$ and $x \in X$, 
\[
 |g_x| \leq \frac{1}{2}(|g|+1),
\]
where the length function is with respect to the generating set $S=\{a_0,\dots,a_{d-1}\}$. In particular, for all $g \in G$ with $|g|\geq 2$, we have 
\[
 |g_x| < |g|. 
\]
\end{lemma}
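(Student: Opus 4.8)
The plan is to reduce the whole statement to the single inequality $|g_x| \le \tfrac12(|g|+1)$, since both the contraction and the identification of the nucleus follow from it formally. First I would record that every generator is an involution: because $\mu_i$ fixes the coordinate $i$ that carries the nontrivial section, the first-level decomposition of $a_i^2$ has trivial root permutation and is trivial in every coordinate except $i$, where its section is again $a_i^2$; iterating down the ray $iii\cdots$ shows that $a_i^2$ has trivial permutation at every vertex, so $a_i^2 = 1$. Consequently any geodesic word $g = a_{i_m} a_{i_{m-1}} \cdots a_{i_1}$ with $m = |g|$ is reduced, i.e.\ $i_{j+1} \ne i_j$ for all $j$, since any subword $a_i a_i$ could be deleted to give a shorter representative.

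Next I would invoke the element decomposition of the preceding lemma, which states that the letter $a_{i_j}$ is contributed to the coordinate $p_j := \mu_{i_1}\mu_{i_2}\cdots\mu_{i_{j-1}}(i_j)$, and that the section $g_x$ is, before any reduction, the product of exactly those $a_{i_j}$ with $p_j = x$, in the order they appear in $g$. In particular $|g_x|$ is bounded above by the number of indices $j$ with $p_j = x$.

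The key step, and the only place where anything beyond bookkeeping happens, is to show that consecutive letters never land in the same coordinate. Setting $\tau = \mu_{i_1}\cdots\mu_{i_{j-1}}$, one has $p_j = \tau(i_j)$ and $p_{j+1} = \tau(\mu_{i_j}(i_{j+1})) = \tau(2i_j - i_{j+1})$, so since $\tau$ is a bijection, $p_j = p_{j+1}$ forces $2i_j - i_{j+1} = i_j$, that is $i_{j+1} = i_j$, which is excluded in a reduced word. Hence for each fixed $x$ the index set $\{\, j : p_j = x \,\}$ contains no two consecutive integers, and any such subset of $\{1,\dots,m\}$ has at most $\lceil m/2\rceil \le \tfrac{m+1}{2}$ elements. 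Combining with the previous paragraph gives $|g_x| \le \lceil |g|/2\rceil \le \tfrac12(|g|+1)$, as desired.

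Finally I would deduce the contraction. For $|g| \ge 2$ the inequality yields $|g_x| \le \tfrac12(|g|+1) < |g|$, so passing to sections strictly decreases the length until it drops to at most $1$; thus for every $g$ all sufficiently deep sections lie in $N = \{1, a_0, \dots, a_{d-1}\}$. Since the only nontrivial section of $a_i$ is $a_i$ itself (at coordinate $i$), the set $N$ is closed under taking sections, and since each $a_i$ recurs as a section of itself at every level, $N$ is exactly the nucleus; hence $D$ is contracting. I expect no genuine obstacle here: once the decomposition lemma is granted, the entire argument rests on the coordinate-separation of consecutive letters, and everything else is elementary counting.
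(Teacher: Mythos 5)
Your proof is correct and follows exactly the route the paper intends: the lemma is stated there without proof, immediately after the element-decomposition lemma, and your key observation---that in a reduced word consecutive letters land in distinct first-level coordinates (since $\tau(i_j)=\tau(2i_j-i_{j+1})$ forces $i_{j+1}=i_j$), so each coordinate receives at most $\lceil |g|/2\rceil \le \tfrac12(|g|+1)$ letters---is precisely the bookkeeping that decomposition is set up to make routine. Your verification that each $a_i$ is an involution, and the identification of the nucleus via the recurrence of each $a_i$ as its own section along the ray $ii\cdots$, are likewise correct and complete the statement.
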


\subsection{The commutator subgroup $D'$}

For a word $W$ over $S=\{a_0,\dots,a_{d-1}\}$ and $j = 0,\dots,d-1$, let $\exp_j(W)$ be the number of occurrences of the letter $a_j$ in $W$. 

\begin{lemma}\label{lem:exp_def}
If the word $W$ represents the identity in $D$, then, for $j = 1,\dots,d-1$,   
\[
 \exp_j(W) \equiv_2 0. 
\]
\end{lemma}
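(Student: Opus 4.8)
The plan is to prove the statement by strong induction on the length $m=|W|$ of the word, establishing the (formally stronger, and cleaner to induct on) claim that $\exp_j(W)\equiv_2 0$ holds for \emph{all} $j=0,\dots,d-1$ whenever $W$ represents the identity; the asserted range $j=1,\dots,d-1$ is then a special case. Before starting I would record the two facts that drive the induction. First, each generator is an involution: since $a_i^2$ fixes level $1$ and its only nontrivial first-level section is $a_i^2$ again (at coordinate $i$), self-similarity forces $a_i^2$ to fix every level, so $a_i^2=1$. Consequently, cancelling adjacent equal letters via $a_i^2=1$ strictly shortens $W$ while changing neither the element it represents nor the parities $\exp_j(W)\bmod 2$; hence at each stage I may assume $W$ is \emph{reduced}, i.e. $i_k\neq i_{k+1}$ for all $k$. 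Second, by the element-decomposition lemma above, the first-level section $g_x$ of $g=a_{i_m}\cdots a_{i_1}$ is the product, in order, of exactly those letters $a_{i_j}$ whose ``contributed position'' $p_j$ equals $x$; crucially the contributed letter is $a_{i_j}$ itself, so the generator \emph{label} is preserved by sectioning.

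For the induction, the claim is vacuous for odd $m$ (a word of odd length maps to a reflection under $D\to D/\St_D(1)=D(d)$ and so cannot represent the identity) and trivial for $m=0$. For the inductive step assume $W$ is reduced, $m\geq 2$, and $W=1$. Since the root permutation is trivial, every first-level section $g_x$ is trivial, and I represent it by the natural word $W_x$ read off from the decomposition. Two bookkeeping facts hold: since each letter lands in exactly one section, summing over $x$ gives
\[
\sum_{x\in X}\exp_\ell(W_x)=\exp_\ell(W)\qquad\text{for every }\ell,
\]
and $\sum_{x}|W_x|=m$. The decomposition formulas give $p_{j+1}-p_j=\pm(i_{j+1}-i_j)$, so consecutive positions coincide exactly when consecutive labels do; as $W$ is reduced with $m\geq 2$ we have $p_1\neq p_2$, so at least two sections are nonempty and therefore each satisfies $|W_x|<m$. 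Applying the induction hypothesis to each $W_x$ (which represents the identity) yields $\exp_\ell(W_x)\equiv_2 0$ for all $\ell$, and summing over $x$ gives $\exp_\ell(W)\equiv_2 0$, completing the step.

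The one genuine obstacle is the well-foundedness of the induction: sectioning need not shorten a word in general — indeed $W=a_i^{m}$ sections to $a_i^{m}$ at coordinate $i$ — and this is exactly the degenerate case where all contributed positions collapse to a single coordinate. The role of the reduction step $a_i^2=1$ is precisely to rule this out, since a reduced word of length $\geq 2$ has some adjacent pair of distinct labels and hence at least two occupied section-positions, forcing every section to be strictly shorter and the induction to close. (One could instead avoid reduction by case-splitting: if all $i_j$ are equal then $W=a_i^{m}$ with $m$ even and the claim is clear, and otherwise some consecutive labels differ and the argument above applies.) Finally, I note that the $d-1$ homomorphisms $W\mapsto\exp_j(W)\bmod 2$ for $j=1,\dots,d-1$ produced by this lemma, together with the total-parity homomorphism $a_i\mapsto 1$ coming from $D\to D(d)\to\ZZ/2\ZZ$, are independent, which is what is needed downstream to identify $D/D'$ with $(\ZZ/2\ZZ)^d$.
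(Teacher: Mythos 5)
Your proof is correct and takes essentially the same route as the paper's: induction on word length, cancellation of adjacent equal letters (using $a_i^2=1$), passage to the first-level sections $W_0,\dots,W_{d-1}$, which represent the identity and are strictly shorter, then applying the induction hypothesis and summing exponents over sections via $\exp_j(W)=\sum_x \exp_j(W_x)$. The only cosmetic difference is that the paper justifies the strict shortening of the sections by citing its contraction lemma, whereas you derive it directly from the contributed-position formulas ($p_{j+1}\neq p_j$ for a reduced word); both are valid, and your version is merely more self-contained.
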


\begin{proof}
By induction on $|W|$. The only word of length up to 1 that represents the identity is the empty word. This establishes the base of the induction. Let the claim be true for all words of length $\leq m$, for some $m \geq 1$, and let $W$ be a word of  length $m+1$ representing the identity in $D$. If two consecutive letters in $W$ are equal, we may remove them without affecting the parity of the exponents or the group element represented by $W$, obtain a shorter word, and apply the induction hypothesis. Otherwise, because of the contraction, the first level sections of $W$ can be represented by words $W_0,\dots,W_{d-1}$ of length $\leq m$. Since the sections of a trivial element are trivial, each of the word $W_0,\dots,W_{d-1}$ represents the identity. Therefore, by the induction hypothesis, for $j = 0,\dots,d-1$, 
\[
 \exp_j(W) = \sum_{i=0}^{d-1} \exp_j(W_i) \equiv_2 0. \qedhere
\]
\end{proof}

The last result shows that the exponents modulo 2 are well defined at the level of group elements by setting $\exp_j(g) = \exp_j(W)$, for $g \in G$ and $j=0,\dots,d-1$, where $W$ is any word over $S$ representing $g$. 

\begin{lemma}\label{lem:comm_is_ker_exp}
\begin{enumerate}
\item 
The map $\exp: G \to (\ZZ/2\ZZ)^d$ by $\exp(g) = (\exp_0(g),\dots,\exp_{d-1}(g))$ is a surjective homomorphism. 

\item The kernel of $\exp$ is the commutator of $D$. 

\item A word $W$ over $S$ represents an element in the commutator $D'$ if and only if $\exp_j(W) \equiv_2 0$, for $j =0,\dots,d-1$. 

\item $|D:D'| = 2^d$ and $D/D'$ is elementary abelian 2-group of rank $d$, generated by (the image) of $S$. 

\item $|D:\St_D(1)D'|=2$ and $\St_D(1)D'/D'$ is the kernel of the augmentation map
$\aug: D/D'\rightarrow \ZZ/2\ZZ$ defined by $a_0^{\alpha_0}\dots a_{d-1}^{\alpha_{d-1}} \mapsto \sum_{i=0}^{d-1}\alpha_i$.

\item $|\St_D(1) : \St_D(1)\cap D'|=|D'\St_D(1) : D'|=2^{d-1}$.
\end{enumerate}
\end{lemma}

\begin{proof}
Since the parity of $\exp_j$ does not depend on the representatives, we have $\exp_j(gh) \equiv_2 \exp_j(g)+\exp_j(h)$, which shows that $\exp$ is a homomorphism. Surjectivity is clear, since $\exp_j(a_j) = 1$. 

Let $K$ be the kernel of $\exp$. 
By definition of $\exp$, the element $g$ is in $K$ if and only if, for $j=0,\dots,d-1$, we have $\exp_j(g) \equiv_2 0$. 
In other words, an element $g$ represented by a word $W$ over $S$ is in $K$ if and only if every generator appears an even number of times in $W$. 

Since the image of $\exp$ is abelian, we have $D' \leq K$. On the other hand, since every generator of $D$ has order 2, every element of $D$ represented by a word in which every generator appears an even number of times is in the commutator $D'$. Thus, $K \leq D'$.  

Since $D'=K$ and $\exp$ is surjective, we have $D/D' = D/K = (\ZZ/2\ZZ)^d$. Note that, for $j=0,\dots,d-1$, the vector $\exp(a_j)$ is the $j$th standard basis vector of $(\ZZ/2\ZZ)^d$. 


As $D/\St_D(1)\cong D(d)$ and $d$ is odd, the abelianisation of $D(d)$ has order 2 and is isomorphic to $D/D'\St_D(1)$, where only the parity of the number of reflections matters.

The last item follows from the previous two.
\end{proof}

\begin{lemma}\label{lem:reg_branch_over_comm}
	The group $D$ branches over its commutator subgroup $D'$.
\end{lemma}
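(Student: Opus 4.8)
The plan is to verify the two defining conditions of regular branching over $D'$: that $D'$ is normal of finite index (immediate, since $D'\trianglelefteq D$ and $[D:D']=2^d$ by Lemma~\ref{lem:comm_is_ker_exp}), and the inclusion $X\ast D'\leq D'$. All the work is in the latter. Since for each fixed $s\in X$ the map $v\mapsto s\ast v$ is a homomorphism on $D'$, it suffices to produce, for every $s\in X$ and every $g\in D'$, the single-coordinate element $s\ast g$ inside $D$ and then check that it lies in $D'=\Ker\exp$.

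First I would isolate a family of single-coordinate building blocks. For $s,t\in X$ set $i=(s+t)/2$ (well defined since $d$ is odd, so $2$ is invertible modulo $d$) and consider the conjugate $c_{s,t}:=a_i a_t a_i\in D$. As the generators are involutions this is genuine conjugation; its root permutation is $\mu_i\mu_t\mu_i=\mu_{2i-t}=\mu_s$, and because $a_i$ has a nontrivial section only at the vertex $i$ (and $2i-t\neq i$ unless $t=i$), conjugation merely transports the section $a_t$ of $a_t$ from vertex $t$ to vertex $\mu_i(t)=2i-t=s$. Hence $c_{s,t}=\mu_s\,\delta_s(a_t)\in D$: an element with root permutation $\mu_s$ whose only nontrivial section, equal to $a_t$, sits at coordinate $s$.

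The key computation is that products of these blocks sharing the first index telescope. Using the wreath multiplication rule and the fact that $\mu_s$ fixes $s$, one checks by induction that
\[
 c_{s,t_1}c_{s,t_2}\cdots c_{s,t_N}=\mu_s^{\,N}\,\delta_s(a_{t_1}a_{t_2}\cdots a_{t_N}),
\]
all other sections remaining trivial because each $c_{s,t}$ is supported, away from the root, only above $s$. In particular, when $N$ is even the root permutation disappears and the product equals $s\ast(a_{t_1}\cdots a_{t_N})$. Now take any $g\in D'$ and a word $W=a_{t_1}\cdots a_{t_N}$ representing it; by Lemma~\ref{lem:comm_is_ker_exp} every generator occurs an even number of times in $W$, so $N$ is even and $s\ast g=\prod_{k}c_{s,t_k}\in D$. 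Finally, since $\exp(c_{s,t})=2\exp(a_i)+\exp(a_t)=\exp(a_t)$ in $(\ZZ/2\ZZ)^d$, we get $\exp(s\ast g)=\sum_k\exp(a_{t_k})=\exp(g)=0$, so $s\ast g\in\Ker\exp=D'$. This yields $X\ast D'\leq D'$ and completes the branching.

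I expect the telescoping identity to be the main obstacle: it is the only place requiring care with the wreath-product multiplication, and one must confirm that no spurious sections appear at coordinates other than $s$ as the partial products accumulate. Once that identity is in hand, the reduction to single coordinates via the homomorphism $v\mapsto s\ast v$, and the parity bookkeeping through $\exp$, are routine.
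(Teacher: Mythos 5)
Your overall scheme (reduce to single coordinates, then certify membership in $D$ and in $\Ker\exp$ separately) is reasonable, but the proof breaks at its foundation: the building-block identity $c_{s,t}=a_ia_ta_i=\mu_s\,\delta_s(a_t)$, with $i=(s+t)/2$, is false. Conjugation by $a_i$ transports sections cleanly only when the element being conjugated acts trivially on the first level; $a_t$ does not, since it carries the root permutation $\mu_t$. Tracking the wreath-product multiplication, the two copies of the conjugator's own section $a_i$ land at the \emph{distinct} vertices $(3s-t)/2$ and $(s+t)/2$ (they coincide only when $s=t$), so they do not cancel, and the correct identity is
\[
a_ia_ta_i \;=\; \mu_s\,\delta_{(3s-t)/2}(a_i)\,\delta_s(a_t)\,\delta_{(s+t)/2}(a_i).
\]
Concretely, for $d=3$, $s=0$, $t=1$ one has $i=2$ and $a_2a_1a_2=(12)\,(a_1,a_2,a_2)$, not $(12)\,(a_1,1,1)$. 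So the flaw is not in the telescoping step you flagged as the main obstacle: every individual block is already wrong. Worse, the approach cannot be patched by producing the blocks differently, because $\mu_s\,\delta_s(a_t)\notin D$ at all: if it were, then $\mu_s\delta_s(a_t)\cdot a_s=\delta_s(a_ta_s)$ would be an element of $\St_D(1)$ supported at the single vertex $s$, and the parity conditions of Proposition \ref{prop:D1}\ref{i:D1_equations} force the section of such an element to lie in $\Ker\exp=D'$, whereas $\exp(a_ta_s)\neq 0$ by Lemma \ref{lem:comm_is_ker_exp}.

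The paper's proof also proceeds by conjugation, but it is careful at exactly the point where your argument fails: it only conjugates elements that are \emph{already} supported at a single vertex. If $v=\delta_l(c)$ and $u\in D$ has root permutation $\sigma$ and section $u_l$ at $l$, then $uvu^{-1}=\delta_{\sigma(l)}(u_l\,c\,u_l^{-1})$; here the conjugator's spurious sections genuinely cancel, precisely because $v$ has trivial root permutation. The initial supply of single-vertex elements of $D'$ is manufactured from squares of commutators of generators (elements of the form $\delta_j([a_j,a_k])$), not from bare generators, exactly because a generator's nontrivial root permutation obstructs the cancellation; the paper then moves the support to arbitrary vertices by conjugation and closes up under conjugation by generators so as to exhaust a generating set of $D'$. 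Salvaging your telescoping strategy would require first producing, inside $D$, elements $\delta_s(h)$ with $h$ ranging over a generating set of $D'$ --- which is the actual content of the paper's argument, not a shortcut around it.
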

\begin{proof}

For $i \neq j$, the indices $i$, $2i-j$, and $3i-2j$ are distinct and, from Lemmas~\ref{lem:dihedral_relations} and \ref{lem:section_decompos_elts_hanoihedral} we obtain
\begin{align*}
	a_{2i-j}a_ia_ja_i  &= \delta_{3i-2j}(a_i)~\delta_{2i-j}(a_{2i-j}a_j)~ \delta_i(a_i) \\
	(a_{2i-j}a_ia_ja_i)^2 &= \delta_{2i-j}([a_{2i-j},a_j]). 
\end{align*} 
If $i = 0$, then
\begin{equation}\label{eq:subdirect_a0}
	(a_{2i-j}a_ia_ja_i)=\delta_{-2j}(a_{i})\delta_{-j}(a_{-j}a_j)\delta_0(a_0)=(a_0, *, *,\dots, *)
\end{equation}
where $*$ represents elements that are not important; while if $i\neq 0$, 
\begin{equation}\label{eq:subdirect_ai}
	(a_{2i-j}a_ia_ja_i)^{a_{i/2}} = \delta_0(a_i)\delta_{j-i}(a_{2i-j}a_j)\delta_{2(j-i)}(a_i)= (a_i,*,*,\dots,*). 
\end{equation}

If $2i-j = 0$, set $h = 1$, otherwise $h=a_{i-j/2}$. 
Then 
\begin{equation}\label{eq:branch_commutator}
	((a_{2i-j}a_ia_ja_i)^2)^h = ([a_{2i-j},a_j],1,1,\dots,1).
\end{equation}

By Lemma~\ref{lem:comm_is_ker_exp}, the elements $((a_{2i-j}a_ia_ja_i)^2)^h$ are all in $D'$, and letting $i, j$ run through $\{0, \dots, d-1\}$ in \eqref{eq:branch_commutator} yields that $D'$  contains the set $\{\delta_0([a_i,a_j]) : i\neq j\in \{0,\dots, d-1\} \}$.
Conjugating this set by appropriate products of elements \eqref{eq:subdirect_a0} and \eqref{eq:subdirect_ai}, we obtain that $D'\geq 0\ast D'$. 
Since $D$ is transitive on $X$, we can conjugate by elements that permute $X$ to get $D'\geq X\ast D'$.

%
%
%
\end{proof}

\subsection{The first level stabiliser $\St_D(1)$}

While it is possible to analyse the first level stabiliser directly, we find it suitable to use graph homology. 

For a simple, connected, undirected graph $\Gamma=(V,E)$, the edge space is the vector space of dimension $|E|$ (the coordinates are indexed by $E$) over the field with 2 elements. Fix a vertex $v_0$ in $V$. Each walk $w$ starting at $v_0$ in $\Gamma$ is represented in the edge space by the vector $(\#_e(w))_{e \in E}$, where $\#_e(w)$ is the number of times the edge $e$ is used, in either direction, in the walk $w$ (note that only the parity plays a role). The cycle space of $\Gamma$, based at $v_0$, is the subspace of the edge space spanned by the representations of the closed walks in $\Gamma$, starting end ending at $v_0$ (the cycle space does not depend on $v_0$, but we prefer to have a base point anyway). 

Let $T$ be a spanning tree of $\Gamma$ and $E'$ be the set of edges not on the tree $T$. The number of edges in $E'$ is $|E|-|V|+1$. For each edge $e \in E'$ pick, arbitrarily, one endpoint to be the origin $o(e)$ and the other to be the terminus $t(e)$, and  let $p_e$ be unique path in $T$ from $v_0$ to $o(e)$, followed by the edge $e$ to $t(e)$, followed by the unique path in $T$ from $t(e)$ to $v_o$. The cycle space has dimension $|E|-|V|+1$ and the representatives of $p_e$, for $e \in E'$, in the edge space form a basis of the cycle space. 

The cycle space can also be described through vertex conditions as follows. A vector $(x_e)_{e \in E}$ is in the cycle space if and only if, for every vertex $v$, we have $\sum_{e\sim v} x_e = 0$, where the summation is taken over all edges incident with $v$ (all edges with either $o(e)=v$ or $t(e)=v$). 
Each of these $|V|$ vertex conditions is implied by the other vertex conditions, and any $|V|-1$ of them are linearly independent.
 Note that the vertex conditions simply state that $(x_e)_{e \in E}$ represents a closed walk, based at $v_0$, such that each edge $e$ in $E$ is used, up to parity, $x_e$ times, if and only if, for every vertex $v$, the number of distinct edges incident with $v$ that are used odd number of times is even (so that the number of entrances and exits from $v$ can be matched). 

In our situation, the graph $\Gamma$ is the left Schreier graph of the action of $D$ on its quotient $D(d)$,  with respect to the generating set $S=\{a_0,\dots,a_{d-1}\}$ of $D$.
The graph $\Gamma$ is isomorphic to the complete bipartite graph $K_{d,d}$. 
The $d$ rotation vertices $\rho_j$, $j=0,\dots,d-1$ are connected by a total of $d^2$ edges to the $d$ mirror symmetry vertices $\mu_j$, $j=0,\dots,d-1$.
For every edge $e$, we declare the rotation vertex incident to $e$ to be the origin, and the mirror symmetry vertex incident to $e$ to be the terminus. 

For a fixed rotation vertex $\rho_j$, there are $d$ edges, labelled by the generators in $S$, using $\rho^j$ as origin and connecting $\rho^j$ to the appropriate mirror symmetry. Since $\mu_i \rho^j = \mu_{i-j/2}$ the edge labelled by $a_i$ with origin $\rho^j$ looks like 
\begin{equation}\label{e:edge}
\xymatrix{
 &  \ar@{}[l]|{\rho^j} \bullet \ar@{-}[r]^{a_i} & \bullet \ar@{}[r]|{\mu_{i-j/2}} &
}
\end{equation}
Similarly, for a fixed mirror symmetry vertex $\mu_j$, there are $d$ edges, labelled by the generators in $S$, using $\mu_j$ as terminus and connecting $\mu_j$ to the appropriate rotation. Since $\mu_i\mu_j = \rho^{2(i-j)}$ the edge labelled by $a_i$ with terminus $\mu_j$ looks like 
\[
\xymatrix{
 &  \ar@{}[l]|{\rho^{2(i-j)}} \bullet \ar@{-}[r]^{a_i} & \bullet \ar@{}[r]|{\mu_{j}} &
}
\]

We interpret $(X\ast D)/(X\ast D') = X\ast (D/D') \cong (\ZZ/2\ZZ)^{d^2}$ as the edge space of $\Gamma$ as follows. 
We take the trivial rotation $1=\rho^0$ to be the base of $\Gamma$.
 A word $W=a_{i_m} \dots a_{i_1}$ over $S$ is identified with the corresponding walk in $\Gamma$ starting at $1$ (the first step is along the edge labelled by $a_{i_1}$, the next along the edge labelled by $a_{i_2}$, and so on). When we pass along the edge $e$ in~\eqref{e:edge} in the direction from the origin $\rho^j$, the letter $a_i$ is contributed to position $\rho^{-j}(i) = i-j$ in the decomposition of $g=a_{i_m} \dots a_{i_1}$. When we pass along the same edge in the direction from the terminus, the letter $a_i$ is contributed to position $\mu_{i-j/2}^{-1}(i)= \mu_{i-j/2}(i) = i-j$. Thus, $a_i$ is contributed to the same position, $i-j$, regardless of the direction in which we pass the edge $e$. 
 This means that we can identify $(X\ast D)/(X\ast D') \cong (\ZZ/2\ZZ)^{d^2}$ with the edge space of $\Gamma$ by observing that, for the edge $e$ in~\eqref{e:edge}, a word $W=a_{i_m} \dots a_{i_1}$ and $g \in D$ represented by $W$, we have $\#_e(W) = \exp_i(g_{i-j})$.
  In other words, the parity of the letter $a_i$ in the $(i-j)$th component of the decomposition of $g$, taken as an element of $(X\ast D)/(X\ast D')\cong(\ZZ/2\ZZ)^{d^2}$, keeps track of the parity of $\#_e$. 

The first level stabiliser $\St_D(1)$ consist of all group elements represented by words which are closed walks based at $1$ in $\Gamma$. Thus, the image of $\St_D(1)$ in the edge space $(X\ast D)/(X\ast D')$ is precisely the cycle space of $\Gamma$. 

\begin{proposition}\label{prop:D1}
\begin{enumerate}
\item An element $a_{i_m}a_{i_{m-1}} \dots a_1$ belongs to the first level stabiliser $\St_D(1)$ if and only if $m$ is even and 
\[
 i_1+i_3+ \dots \equiv_d i_2+i_4+ \dots~.
\]

\item\label{i:D1_gens} The first level stabiliser $\St_D(1)$ is generated by the elements 
\[
 a_j a_{j+i} a_i a_0 = \delta_{j}(a_j)~\delta_{j-i}(a_{j+i})~\delta_{-i}(a_i)~\delta_{0}(a_0), 
\]
for $i,j =1,\dots,d-1$. 

\item\label{i:D1_equations}The elements of the first level stabiliser can be described as follows.
 An element $(g_0,\dots,g_{d-1})$ of $X\ast D$ is an element of $\St_D(1)$ if and only if, for $j = 0, \dots,d-1$, 
\[
 \sum_{i=0}^{d-1} \exp_{j+i}(g_i) \equiv_2 \sum_{i=0}^{d-1} \exp_{j-i}(g_i) \equiv_2 0.
\]
Each of these $2d$ parity conditions is implied by the other $2d-1$. 

\item\label{i:D1_index} $|X\ast D:\St_D(1)| = 2^{2d-1}$ and $|\St_D(1):X\ast D'| = 2^{(d-1)^2}$. Both $(X\ast D)/\St_D(1)$ and $\St_D(1)/(X\ast D')$ are elementary abelian 2-groups, of ranks $2d-1$ and $(d-1)^2$, respectively. 
\end{enumerate}
\end{proposition}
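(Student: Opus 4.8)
The entire proposition is really a single computation organized around the graph-homology dictionary set up just before the statement: the identification of $(X\ast D)/(X\ast D')\cong(\ZZ/2\ZZ)^{d^2}$ with the edge space of $\Gamma\cong K_{d,d}$, under which $\St_D(1)$ corresponds to the cycle space. The plan is to prove the four items essentially in sequence, extracting (i) and (iv) from general facts about the cycle space of $K_{d,d}$, and (ii), (iii) from the two standard descriptions of that cycle space (by a spanning-tree basis, and by vertex conditions).

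First I would prove (i). By Lemma~\ref{lem:comm_is_ker_exp}(iii) together with the fact that $g_{(\epsilon)}=\mu_{i_m}\cdots\mu_{i_1}$ from the root-permutation lemma, an element lies in $\St_D(1)$ precisely when this product of mirror symmetries is trivial; by the closed-form in that lemma, $\mu_{i_m}\cdots\mu_{i_1}=1$ forces $m$ even and the rotation exponent $2(i_m-i_{m-1}+\cdots+i_2-i_1)\equiv_d 0$, which (since $d$ is odd, so $2$ is invertible mod $d$) is exactly the alternating-sum condition $i_1+i_3+\cdots\equiv_d i_2+i_4+\cdots$. This is the quickest of the four and needs no homology.

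Next I would handle (iv) via the homology dictionary. Since $\Gamma\cong K_{d,d}$ has $2d$ vertices and $d^2$ edges, the cycle space has dimension $|E|-|V|+1=d^2-2d+1=(d-1)^2$, giving the rank of $\St_D(1)/(X\ast D')$; and $(X\ast D)/\St_D(1)$, as the quotient of the $d^2$-dimensional edge space by the cycle space, has dimension $d^2-(d-1)^2=2d-1$. Both are elementary abelian because they are $\ZZ/2\ZZ$-vector spaces, and the index claims follow by exponentiating. For (ii), I would produce an explicit spanning-tree basis of the cycle space of $K_{d,d}$: using the edges labeled $a_0$ at each vertex (or equivalently the star of generators meeting $\rho^0$ and $\mu_0$) as a spanning tree, the fundamental cycles are the length-$4$ closed walks $a_ja_{j+i}a_ia_0$ for $i,j=1,\dots,d-1$, which are $(d-1)^2$ in number — matching the computed dimension — and whose first-level decomposition is exactly the displayed $\delta_j(a_j)\,\delta_{j-i}(a_{j+i})\,\delta_{-i}(a_i)\,\delta_0(a_0)$ by the contribution table of Lemma~\ref{lem:exp_def}'s successor. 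I would check these lie in $\St_D(1)$ via (i) and that they are independent and of the right cardinality, hence a basis. For (iii), I would invoke the vertex-condition description of the cycle space: writing $g=(g_0,\dots,g_{d-1})$, the parity $\exp_{i}(g_{i-j})=\#_e$ records edge usages, and the two conditions $\sum_i\exp_{j+i}(g_i)\equiv_2 0$ and $\sum_i\exp_{j-i}(g_i)\equiv_2 0$ are precisely the incidence conditions at the mirror vertex $\mu_j$ and the rotation vertex $\rho^j$ respectively (after the reindexing dictated by $\mu_i\rho^j=\mu_{i-j/2}$ and $\mu_i\mu_j=\rho^{2(i-j)}$); the stated dependency ``each of the $2d$ is implied by the other $2d-1$'' is the familiar fact that the $2d$ vertex conditions on a connected graph have exactly one relation among them.

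The main obstacle I anticipate is not any single item but keeping the index bookkeeping consistent across the two reindexings — the $\rho^{-j}$ origin-action $i\mapsto i-j$ and the $\mu_{i-j/2}$ terminus-action, both of which the paragraph before the statement has already reconciled to the single position $i-j$. The delicate point for (iii) is matching ``incidence at a rotation vertex'' versus ``incidence at a mirror vertex'' to the $+i$ versus $-i$ shifted sums correctly, and verifying that the off-by-$2$ in $\mu_i\mu_j=\rho^{2(i-j)}$ does not disturb the parity count (it does not, since only parity of edge multiplicities matters, and the vertex conditions are about which edges are used an odd number of times). Once that dictionary is pinned down, (iii) is immediate from the vertex-condition characterization, and the count of independent conditions gives the same rank $2d-1$ as in (iv), providing a consistency check.
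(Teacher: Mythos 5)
Your proposal follows the paper's proof essentially step for step: (i) via the root permutation $\mu_{i_m}\cdots\mu_{i_1}$ being trivial iff $m$ is even and the alternating sum vanishes mod $d$; (ii) via the spanning tree and its fundamental $4$-cycles $a_ja_{j+i}a_ia_0$; (iii) via the vertex-condition description of the cycle space, with the dependency among the $2d$ conditions coming from connectedness; and (iv) via the dimension count $|E|-|V|+1=(d-1)^2$ and $|V|-1=2d-1$. Two small slips, neither fatal: first, ``the edges labeled $a_0$ at each vertex'' do not form a spanning tree of $K_{d,d}$ (they are a perfect matching); only your alternative description, the union of the stars of $\rho^0=1$ and $\mu_0$, is the tree the paper actually uses. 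Second, your assignment of conditions to vertices in (iii) is transposed: the paper's computation gives $\sum_{i=0}^{d-1}\exp_{j+i}(g_i)\equiv_2 0$ as the condition at the \emph{rotation} vertex $\rho^j$ (its incident edge $a_{j+i}$ is contributed to position $\rho^{-j}(j+i)=i$), and $\sum_{i=0}^{d-1}\exp_{j-i}(g_i)\equiv_2 0$ as the condition at the \emph{mirror} vertex $\mu_{j/2}$ (where $\mu_{j/2}^{-1}(j-i)=i$); since the full collection of $2d$ conditions is the same under either assignment, the statement and proof are unaffected, but this is exactly the bookkeeping point you flagged and it must be resolved in this direction.
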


\begin{proof}
(i) The root permutation of $a_{i_m} \dots a_{i_1}$ is $\mu_{i_m} \dots \mu_{i_1}$ and $a_{i_m} \dots a_{i_1}$ represents an element in $\St_D(1)$ if and only if $\mu_{i_m} \dots \mu_{i_1}=1$, which happens precisely when $m$ is even (so that we have a rotation) and $i_m - i_{m-1} + \dots + i_2 - i_1 \equiv_d 0$ (so that the rotation is trivial). 

(ii) Let $T$ be the spanning tree of $\Gamma$ consisting of all edges incident with the trivial rotation $1$ together with all edges incident with $\mu_0$. For $i,j = 1\dots,d-1$, the cycles 
\[ 
\xymatrix{
 1  \ar@{-}[r]^{a_0} & 
 \mu_0 \ar@{-}[r]^{a_i} & 
 \rho^{2i} \ar@{-}[r]^{a_{i+j}} & 
 \mu_j \ar@{-}[r]^{a_{j}}&
 1 
}
\]
form a basis of the cycle space (the third edge is the one not in $T$ and it connects the non-trivial rotation vertex $\rho^{2i}$ to the mirror symmetry vertex $\mu_j$, not equal to $\mu_0$). Thus, the elements $a_ja_{i+j}a_ia_0$ generate $\St_D(1)$.

(iii) In our situation, we have $2d$ vertex conditions, one for every vertex in the Cayley graph of $D(d)$. 

Fix a vertex $\rho^j$ representing a rotation. Its incident edges are $a_{j+i}$, for $i = 0,\dots,d-1$. The edge $a_{j+i}$ is contributed to position $\rho^{-j}(j+i) = i$. The vertex condition at $\rho^j$ then reads
\[
 \sum_{i=0}^{d-1} \exp_{j+i}(g_i) \equiv_2 0. 
\]
Fix a vertex $\mu_{j/2}$ representing a mirror symmetry. Its incident edges are $a_{j-i}$, for $i = 0,\dots,d-1$. The edge $a_{j-i}$ is contributed to position $\mu_{j/2}^{-1}(j-i) = \mu_{j/2}(j-i)=i$. The vertex condition at $\mu_{j/2}$ then reads
\[
 \sum_{i=0}^{d-1} \exp_{j-i}(g_i) \equiv_2 0. 
\]

(iv) The dimension of the cycle space $\St_D(1)/(X\ast D')$ is $(d-1)^2$ since we have $|E|-|V|+1 = d^2-2d+1=(d-1)^2$. The dimension of the complementary space $(X\ast D)/\St_D(1)$ is $|V|-1 = 2d-1$. 
\end{proof}


\begin{proposition}\label{prop:rist_D}
	The rigid level stabiliser $R_n=\rst_D(n)$ of level $n$ in $D$ is $X^n\ast D'$.
	For $n \geq 1$, we have 
	\[
	|D:R_n| = 2^{(d-2)d^n+2} \cdot d^{\frac{d^{n}-1}{d-1}}. 
	\]
\end{proposition}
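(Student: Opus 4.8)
```latex
The plan is to prove the identification $R_n = \rst_D(n) = X^n \ast D'$ first, and then to compute the index $[D:R_n]$ by telescoping through the tower of subgroups $D \geq \St_D(n) \geq \cdots$ down to $X^n \ast D'$, combining the level-$1$ data already established in Proposition~\ref{prop:D1} with the branching structure.

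For the first claim, the inclusion $X^n \ast D' \leq \rst_D(n)$ is immediate: each $v \ast D'$ (for $v \in X^n$) is supported on $vX^*$, so it lies in $\rst_D(v)$, and the direct product of these over $v \in X^n$ sits inside $\rst_D(n)$. For the reverse inclusion, I would argue that $\rst_D(v) \leq v \ast D'$ for each $v \in X^n$. Since $D$ branches over $D'$ (Lemma~\ref{lem:reg_branch_over_comm}), the section map sends $\rst_D(v)$ into $D$; the content is that the section must in fact land in $D'$. The cleanest route is to observe that the abelianization invariant $\exp$ from Lemma~\ref{lem:comm_is_ker_exp} can be tracked through the first-level decomposition. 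Concretely, an element supported only on the subtree at a single vertex $x \in X$ must fix level~$1$, hence its root permutation is trivial; by part~(iii) of Proposition~\ref{prop:D1}, the parity conditions $\sum_i \exp_{j\pm i}(g_i) \equiv_2 0$ together with $g_i$ trivial for $i \neq x$ force $\exp_{j}(g_x) \equiv_2 0$ for every $j$, i.e. $g_x \in D'$. Iterating this down the tree from level~$1$ to level~$n$ (using self-similarity and that the section of an element of $D'$ at any vertex of $X^n$ is again forced into $D'$) yields $\rst_D(v) \leq v \ast D'$, and hence equality $R_n = X^n \ast D'$.

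For the index, the strategy is to factor $[D:R_n]$ as a product of indices along the chain, using the multiplicativity established levelwise. I would write
\[
[D : X^n \ast D'] = [D : \St_D(1)]\cdot[\St_D(1): X \ast D]\cdot \prod_{\text{levels}} [\,\cdots\,],
\]
but more systematically it is cleaner to use the two basic building blocks already available: $[D:\St_D(1)] = |D(d)| = 2d$ (from item~(ii) of Theorem~\ref{t:list}), and the level-$1$ indices $[X \ast D : \St_D(1)] = 2^{2d-1}$ and $[\St_D(1) : X \ast D'] = 2^{(d-1)^2}$ from Proposition~\ref{prop:D1}(iv), which combine to give $[X \ast D : X \ast D'] = [D:D']^d = 2^{d^2}$ as a consistency check. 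The key recursive relation is $X^n \ast D' = X \ast (X^{n-1} \ast D')$ applied inside the self-similar structure, giving a multiplicative recursion for the indices $[D : X^n \ast D']$ in terms of $[D : X^{n-1} \ast D']$ and the fixed level-transition factor $[D : X \ast D']$. Solving this recursion produces the geometric-sum exponent $\tfrac{d^n - 1}{d-1}$ on $d$ and a linear-in-$d^n$ exponent on $2$.

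The main obstacle will be bookkeeping the exact powers of $2$. The subtlety is that passing from $D$ to $X \ast D'$ is \emph{not} simply $d$ independent copies of $[D:D']$: the correct relation threads through the first level stabilizer, so $[D : X\ast D'] = [D:\St_D(1)]\cdot[\St_D(1):X\ast D'] = 2d \cdot 2^{(d-1)^2}$, and at deeper levels the factor $[X^{n-1}\ast D : \St_D(n) \cap (X^{n-1}\ast D)]$ must be tracked carefully against $[X^{n-1}\ast D' : X^n \ast D']$. I expect the cleanest presentation to set up the recursion
\[
[D : X^{n}\ast D'] = [D : X^{n-1}\ast D']\cdot [X^{n-1}\ast D' : X^{n}\ast D'],
\]
evaluate the transition factor $[X^{n-1}\ast D' : X^{n}\ast D'] = [D' : X \ast D']^{\,d^{n-1}}$ using that $D'$ branches as a direct product over the $d^{n-1}$ vertices of level $n-1$, and plug in $[D':X\ast D'] = d\cdot 2^{(d-1)(d-2)}$ from item~(iii) of Theorem~\ref{t:list}. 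Combining $[D:D'] = 2^d$ for the top with the transition factors then forces the stated closed form; verifying that the exponents of $2$ telescope to exactly $(d-2)d^n + 2$ and those of $d$ to exactly $\tfrac{d^n-1}{d-1}$ is the one genuinely error-prone computation, which I would carry out by induction on $n$ with the base case $n=1$ matching $2d \cdot 2^{(d-1)^2} = 2^{(d-2)d + 2}\cdot d$.
```
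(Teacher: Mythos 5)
Your proposal follows essentially the same route as the paper: the inclusion $X\ast D'\le R_1$ comes from branching (Lemma \ref{lem:reg_branch_over_comm}), the reverse inclusion at level $1$ from the parity conditions of Proposition \ref{prop:D1}\ref{i:D1_equations} combined with Lemma \ref{lem:comm_is_ker_exp}, the passage to level $n$ from self-similarity, and the index from the telescoping $[D:R_n]=[D:D']\prod_{i=0}^{n-1}[D':X\ast D']^{d^i}$; your exponent bookkeeping ($(d-2)d^n+2$ for the power of $2$ and $(d^n-1)/(d-1)$ for the power of $d$) is correct. Two repairs are needed, though. First, your justification for propagating from level $1$ to level $n$ invokes the claim that the section of an element of $D'$ at any vertex is again forced into $D'$; as a general statement about $D'$ this is false (the commutator $[a_j,a_{j+i}]$ decomposed in the proof of Lemma \ref{lem:reg_branch_over_comm} has sections such as $a_{j+i}\notin D'$), and as a statement about rigid-stabilizer elements it is precisely what is being proved, so it cannot be assumed. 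The correct propagation uses only self-similarity: for $v=xw$, any $g\in\rst_D(v)$ fixes level $1$ and is supported on $xX^*$, so $g=x\ast g_x$ with $g_x\in D$ supported on $wX^*$, i.e.\ $g_x\in\rst_D(w)$, and induction on $|v|$ finishes; this is exactly the paper's step $R_n=X^{n-1}\ast R_1$. Second, quoting $[D':X\ast D']=d\cdot 2^{(d-1)(d-2)}$ from Theorem \ref{t:list}(iii) is circular, since in the paper that value is established inside this very proposition; replace the citation by the one-line computation $[D':X\ast D']=[D:X\ast D']/[D:D']=\bigl(2d\cdot 2^{(d-1)^2}\bigr)/2^d=d\cdot 2^{(d-1)(d-2)}$, which uses only Proposition \ref{prop:D1}\ref{i:D1_index}, Lemma \ref{lem:comm_is_ker_exp}, and $D/\St_D(1)\cong D(d)$.
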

\begin{proof}
	
	We have already seen in Lemma \ref{lem:reg_branch_over_comm} that $x\ast D'\leq D'$ and therefore $x\ast D'\leq \rst_D(x)$ for every $x\in X$. 
	This implies that $X\ast D'\leq R_1$. 
	Inductively, $X^n\ast D'\leq X^{n-1}\ast D'\leq D$ for every $n\geq 1$ and so $X^n\ast D'\leq R_n$ for every $n\geq 1$.

	To show the opposite containment, consider an element $g=\delta_x(g_x)\in \rst_D(x)$ for some $x\in X$. 
	Then, since  $g\in \St_D(1)$, it must satisfy the equations in Proposition~\ref{prop:D1} \ref{i:D1_equations}, which reduce to $\exp_j(g_x)\equiv_2 0$ for all $j\in\{0,\dots,d-1\}$. 
	By Lemma \ref{lem:comm_is_ker_exp}, $g_x\in D'$, so $\rst_{D'}(x)\leq \rst_D(x)\leq x\ast D'$ for every $x\in X$ and $X\ast D'= R_1=\rst_{D'}(1)$.
	
	From this we obtain that $X^2\ast D'= X\ast \rst_{D'}(1)=R_2$ and inductively that $X^n\ast D'=R_n$ for all $n\geq 1$. 
	
	By the above,  Proposition \ref{prop:D1}\ref{i:D1_index}, and since $D/\St_D(1)\cong D(d)$, we have 
	$$|D:R_1|=|D:\St_D(1)||\St_D(1):(X\ast D')|=2d\cdot 2^{(d-1)^2}. $$
	This in turn implies that 
	$$|D':R_1|=|D':(X\ast D')|=\frac{|D:R_1|}{|D:D'|}=d2^{(d-1)^2+1-d}=d\cdot2^{(d-1)(d-2)}.$$
	Since $|D:R_1|=|D:D'||D':R_1|$ and $R_i=X\ast R_{i-1}$, so $R_i=X^i\ast D'$ for all $i$,  we have 
	$$|D:R_n|=|D:R_1|\times \prod_{i=1}^{n-1}|R_i:R_{i+1}|=
	|D:D'|\times\prod_{i=0}^{n-1}|D':X\ast D'|^{d^i}=2^{s}\cdot d^t$$
	where  $t=\sum_{i=0}^{n-1}d^i=(d^n-1)/(d-1)$ and  $s=d+(d-1)(d-2)\sum_{i=0}^{n-1}d^i =d^n(d-2)+2$.
\end{proof}


\subsection{The rigid kernel of $D$}

We first prove that the hypotheses of Theorem \ref{thm:rigid_kernel_surjective_system_direct_prod} hold for $D$. 
According to Proposition~\ref{prop:rist_D}, the maximal branching subgroup of $D$ is $D'$ and we know from \ref{lem:comm_is_ker_exp} that  $D/D'$ is an elementary abelian 2-group of rank $d$. 
This means that $D$ satisfies the hypotheses of the second item of the theorem. 
It only remains to see that $\St_D(2)D'\geq \St_D(1)$ and find $\St_D(2)/\Triv_D(2)$.

\begin{lemma}\label{lem:D2modT2}
	The following items hold:
	\begin{enumerate}
		\item  $(\St_D(2)(X\ast D'))/(X\ast D') \cong \St_D(2)/\Triv_D(2)$.
		\item $(\St_D(2)(X\ast D'))/(X\ast D')$ is an elementary abelian 2-group of rank $(d-2)(d-1)$ with a basis given by the images modulo $(X\ast D')$ of 
		\begin{equation}\label{eq:gens_D2_mod_XD'}
			a_ja_{j+i}a_ia_0\cdot a_{\frac{j}{2}}a_0a_{\frac{-j}{2}}a_0\cdot a_{\frac{j-i}{2}}a_0a_{\frac{i-j}{2}}a_0\cdot a_{\frac{-i}{2}}a_0a_{\frac{i}{2}}a_0, \quad j,i\in\{1,\dots,d-1\}, i\neq -j. 
		\end{equation}
		\item The patterns of the corresponding generators of $\St_D(2)/\Triv_D(2)$ are:
		\[%
		\xymatrix@!C=4em{
			&& a_0a_ia_{j+i}a_{\frac{j}{2}}a_{\frac{-j}{2}} a_{\frac{j-i}{2}}a_{\frac{i-j}{2}} a_{\frac{-i}{2}}a_{\frac{i}{2}} 
			\ar@{->}|-{j}[lld] \ar@{->}|-{j-i}[ld] \ar@{->}|-{-i}[d] \ar@{->}|-{j/2}[rd] \ar@{->}|-{(j-i)/2}[rrd] \ar@{->}|-{-i/2}[rrrd] &&&& \\
			a_{\frac{j}{2}}a_ja_{\frac{j}{2}}a_{0}&  a_{\frac{j+i}{2}}a_{j+i}a_{\frac{j+i}{2}}a_0 & a_{\frac{i}{2}}a_{i}a_{\frac{i}{2}}a_0 & a_{\frac{j-i}{2}}a_0a_{\frac{i-j}{2}}a_0 & a_{\frac{j}{2}}a_0a_{\frac{-j}{2}}a_0 & a_{\frac{i}{2}}a_0a_{\frac{-i}{2}}a_0 
		}
		\] %
		
		\item 	$\St_D(2)D'=\St_D(1)D'$ and $\{\St_D(n)/\Triv_G(n), r_{m,n}\}$ is a surjective inverse system. 
	\end{enumerate}
\end{lemma}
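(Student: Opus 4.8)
The first item is the second isomorphism theorem: since $K=D'$ is the maximal branching subgroup (Proposition \ref{prop:rist_D}) we have $\Triv_D(2)=\St_D(2)\cap(X\ast D')$, so
\[
\frac{\St_D(2)(X\ast D')}{X\ast D'}\cong\frac{\St_D(2)}{\St_D(2)\cap(X\ast D')}=\frac{\St_D(2)}{\Triv_D(2)}.
\]
The plan for the remaining items is to realise this group as a subgroup of the cycle space $C:=\St_D(1)/(X\ast D')$, which Proposition \ref{prop:D1}\ref{i:D1_index} identifies with the $\mathbb{F}_2$-cycle space of $\Gamma\cong K_{d,d}$ (of dimension $(d-1)^2$), and then to read off a basis from explicit elements. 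Being a subspace of the $\mathbb{F}_2$-space $C$, it is automatically elementary abelian of exponent $2$, so the content of (ii) is the rank and the basis.

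For items (ii) and (iii) I would first write down, for each admissible pair $(i,j)$, the element $w_{i,j}$ displayed in \eqref{eq:gens_D2_mod_XD'} as the product of the $\St_D(1)$-generator $a_ja_{j+i}a_ia_0$ (Proposition \ref{prop:D1}\ref{i:D1_gens}) with the three ``correcting'' factors of the form $a_{c}a_0a_{-c}a_0$. Applying the first-level section-decomposition lemma to $w_{i,j}$ produces exactly the six nontrivial sections recorded in the pattern of item (iii), all other sections being trivial. One then checks, by a short product of the dihedral relations $\mu_a\mu_b=\rho^{2(a-b)}$ and $\mu_a\rho^b=\mu_{a-b/2}$, that each of those six section words has \emph{trivial root permutation}; hence each section lies in $\St_D(1)$ and therefore $w_{i,j}\in\St_D(2)$. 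This simultaneously proves (iii) and exhibits the claimed elements as genuine members of $\St_D(2)$.

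The heart of (ii) is to show these $(d-1)(d-2)$ elements form a basis of the image of $\St_D(2)$ in $C$. The key structural observation is that a coset $g(X\ast D')$ with $g\in\St_D(1)$ lies in that image if and only if each section $g_s$ lies in $\St_D(1)D'$; since the image of $\St_D(1)$ in $D/D'$ is precisely the even-weight subspace (from Lemma \ref{lem:comm_is_ker_exp} together with the generators of Proposition \ref{prop:D1}\ref{i:D1_gens}), this is equivalent to the $d$ ``section-parity'' conditions $\sigma_s(g):=\sum_{k}\exp_k(g_s)\equiv_2 0$. Thus
\[
\frac{\St_D(2)(X\ast D')}{X\ast D'}=\{\,v\in C:\sigma_s(v)=0\text{ for all }s\,\}.
\]
Since $\sum_s\sigma_s$ records the (even) length of $g$, these functionals satisfy one relation, so their rank on $C$ is at most $d-1$; the main obstacle is to show it is exactly $d-1$, i.e.\ that the section-weight map $\Sigma\colon C\to\mathbb{F}_2^{\,d}$ surjects onto the even-weight subspace. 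I expect to do this by a telescoping computation on the cycle basis $\{b_{i,j}\}$ (images of the generators of \ref{i:D1_gens}): the combinations $\Sigma(b_{i,j})+\Sigma(b_{i,j-i})$ realise the transpositions $e_j+e_{j-2i}$, and letting $i$ range over $\{1,\dots,d-1\}$ (so that $2i$ runs over all nonzero classes, $d$ being odd) together with $j$ gives all transpositions, which span the even-weight subspace. This yields $\dim=(d-1)^2-(d-1)=(d-1)(d-2)$, matching the number of the $w_{i,j}$; independence of their $C$-images, checked via a distinguished coordinate (e.g.\ the $a_{j+i}$-entry of the section at $j-i$), then upgrades this to a basis.

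Finally, for item (iv) the surjectivity of the system follows cleanly once the explicit elements are in hand. By Lemma \ref{lem:surjective_system_iff_G2K=G1K} it suffices to prove $\St_D(2)D'\supseteq\St_D(1)$, i.e.\ that the images of $\St_D(2)$ and of $\St_D(1)$ in $D/D'$ coincide. Taking the diagonal elements $w_{i,i}$ (admissible since $i\neq -i$ for $d$ odd) and computing $\exp(w_{i,i})=e_0+e_{2i}$, one sees that as $i$ ranges over $\{1,\dots,d-1\}$ the residue $2i$ ranges over all nonzero classes, so these vectors span the even-weight subspace, which is the image of $\St_D(1)$. Hence $\St_D(2)D'=\St_D(1)D'$ and the inverse system $\{\St_D(n)/\Triv_D(n),r_{m,n}\}$ is surjective. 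The one genuinely delicate point in the whole argument is the rank computation for the section-parity functionals on $C$, since it requires controlling how the row-sum (section) conditions interact with the diagonal and antidiagonal vertex conditions that cut out the cycle space of $K_{d,d}$.
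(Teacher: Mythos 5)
Your item (i), the linear--algebra core of item (ii), and item (iv) follow the paper's own route: the paper likewise identifies the image of $\St_D(2)$ in $\St_D(1)/(X\ast D')$ as the kernel of the coordinatewise augmentation map (your $\Sigma$ is its map $\Aug$), obtains the rank $(d-1)^2-(d-1)=(d-1)(d-2)$ by exhibiting the vectors $e_0+e_{2k}$ in the image, and proves surjectivity of the inverse system exactly via the diagonal elements and Lemma~\ref{lem:surjective_system_iff_G2K=G1K}.

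The genuine error is your claim that the elements $w_{i,j}$ of \eqref{eq:gens_D2_mod_XD'} lie in $\St_D(2)$ and have the words of item (iii) as their first-level sections. They do not. Applying the decomposition of Proposition~\ref{prop:D1}\ref{i:D1_gens} to each factor, the section of $w_{i,j}$ at the vertex $j$ is $a_ja_0$ (an $a_j$ contributed by $a_ja_{j+i}a_ia_0$ and an $a_0$ by $a_{\frac{j}{2}}a_0a_{\frac{-j}{2}}a_0$), whose root permutation is $\mu_j\mu_0=\rho^{2j}\neq 1$ since $d$ is odd; likewise the sections at $j-i$ and $-i$ are $a_{j+i}a_0$ and $a_ia_0$, and those at $j/2,(j-i)/2,-i/2$ are $a_{\frac{j}{2}}a_{\frac{-j}{2}}$, etc., all with nontrivial root permutation. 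Hence $w_{i,j}\in\St_D(1)\setminus\St_D(2)$: its sections lie only in $\St_D(1)D'$, which is precisely why the lemma is phrased in terms of $\St_D(2)(X\ast D')/(X\ast D')$. To obtain item (iii) one must additionally correct $w_{i,j}$ by elements of $X\ast D'$ --- for instance, right-multiplication by $\delta_j\bigl(a_0[a_j,a_{j/2}]a_0\bigr)$ replaces the section $a_ja_0$ by $a_{\frac{j}{2}}a_ja_{\frac{j}{2}}a_0\in\St_D(1)$, and right-multiplication by $\delta_{j/2}\bigl([a_{-j/2},a_0]\bigr)$ replaces $a_{\frac{j}{2}}a_{\frac{-j}{2}}$ by $a_{\frac{j}{2}}a_0a_{\frac{-j}{2}}a_0\in\St_D(1)$ --- which leaves the class modulo $X\ast D'$ unchanged and produces an honest element of $\St_D(2)$ whose size-2 pattern is the displayed one. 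Without this step item (iii) is unproved. The membership claim you need for items (ii) and (iv) does survive, but for a different reason than the one you give: each raw section of $w_{i,j}$ has even length, so $\Sigma(w_{i,j})=0$ and $w_{i,j}\in\St_D(2)(X\ast D')$ by your own parity characterization; membership in $\St_D(2)$ itself is false and is not needed there.
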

\begin{proof}
	
	\begin{enumerate}[leftmargin=\parindent]
		\item Since $\Triv_D(2)=X\ast\Triv_D(1)=X\ast(\St_D(1)\cap D')$ and $\St_D(2)=\St_D(1)\cap (X\ast \St_D(1))$,  we have 
		$$\frac{\St_D(2)}{\Triv_D(2)}=\frac{\St_D(1)\cap X\ast\St_D(1)}{X\ast \St_D(1)\cap X\ast D'}\cong \frac{\St_D(1)\cap (X\ast \St_D(1)D')}{X\ast D'} =\frac{\St_D(2)(X\ast D')}{X\ast D'},$$ 

		\item From the above, we must work out  $\frac{\St_D(1)\cap (X\ast \St_D(1)D')}{(X\ast D')}$.
		Recall from Proposition \ref{prop:D1} \ref{i:D1_gens} that $\St_D(1)/(X\ast D')$ is generated by $(d-1)^2$ elements: $$a_ja_{j+i}a_ia_0=\delta_j(a_j)\delta_{j-i}(a_{j+i})\delta_{-i}(a_i)\delta_0(a_0)\quad \text{ for } i,j\in\{1,\dots,d-1\}.$$
		%
		Recall from Lemma~\ref{lem:comm_is_ker_exp} that $\St_D(1)D'/D'$ is the kernel of the augmentation map
		$\aug: D/D'\rightarrow \ZZ/2\ZZ$ defined by $a_0^{\alpha_0}\dots a_{d-1}^{\alpha_{d-1}} \mapsto \sum_{i=0}^{d-1}\alpha_i.$
		Thus, $\frac{\St_D(1)\cap (X\ast \St_D(1)D')}{(X\ast D')}$ is the kernel of the map 
		$$\Aug:\St_D(1)/(X\ast D')\rightarrow (\ZZ/2\ZZ)^d, \quad (x_0, \dots, x_{d-1})\mapsto (\aug(x_0),\dots,\aug(x_{d-1})).$$ 
		

		%
		
		The parity conditions given in Proposition~\ref{prop:D1}\ref{i:D1_equations} when added all together imply that $(x_0, \dots, x_{d-1})\in \St_D(1)/(X\ast D')$ satisfies the equation $\sum_{i=0}^{d-1}\aug(x_i)\equiv_2 0$ and therefore the image of $\Aug$ has dimension at most $d-1$ in $\ZZ/2\ZZ^d$. 
%
		Now note that, as $d$ is odd, $2j$ ranges through all of $\{1,\dots,d-1\}$ with $j\in\{1,\dots,d-1\}$. 
		So the $d-1$ images 
		$$\Aug(a_ja_0a_{-j}a_0)= \Aug(\delta_j(a_ja_{-j})\delta_{2j}(a_0)\delta_0(a_0))=\delta_{2j}(1)\delta_0(1)$$
		for $j\in\{1,\dots,d-1\}$ are linearly independent and so they form a basis for the image of $\Aug$. 
		Consequently, the kernel of $\Aug$ is an elementary abelian 2-group of rank $(d-1)^2-(d-1)=(d-2)(d-1)$.
		
		Expressing the images under $\Aug$ of the generators $\{a_ja_{j+i}a_ia_0: i,j=1,\dots,d-1\}$ of $\St_D(1)$ in terms of the above basis elements yields 
		$\Aug(a_ja_{j+i}a_ia_0)=\Aug(a_{-i}a_{-i-j}a_{-j}a_0)$  and so the $(d-1)(d-2)$ relations
		$$\Aug(a_ja_{j+i}a_ia_0)=\Aug(a_{j/2}a_0a_{-j/2}a_0)+\Aug(a_{(j-i)/2}a_{0}a_{(i-j)/2}a_0)+\Aug(a_{-i/2}a_0a_{i/2}a_0),$$
		for $i,j\in\{1,\dots,d-1\}, i\neq -j$ form a basis for the kernel of $\Aug$.
		This basis consists of the images, modulo $X\ast D'$,  of 
		$$a_0a_ia_{j+i}a_j\cdot a_{j/2}a_0a_{-j/2}a_0\cdot a_{-i/2}a_0a_{i/2}a_0\cdot a_{(j-i)/2}a_0a_{(i-j)/2}a_0, \quad j,i=1,\dots,d-1, i\neq -j.$$

		\item Using that $a_{j}a_{j+i}a_ia_0=\delta_j(a_j)\delta_{j-i}(a_{j+i})\delta_{-i}(a_i)\delta_0(a_0)$, it is straightforward to see that an element $g$ of the form shown in \eqref{eq:gens_D2_mod_XD'} has the following non-trivial sections: $g_j=a_ja_0$, $g_{-i}=a_ia_0$, $g_{j-i}=a_{j+i}a_0$ and $g_k=a_ka_{-k}$ for $k=j/2, -i/2, (j-i)/2$. 
		In general, these sections are not in $\St_D(1)$, as needed for $g$ to be in $\St_D(2)$; they are only in $\St_D(1)D'$. 
		However, since $X\ast D'\leq \Triv_D(1)$, we can find appropriate elements of $D$ to multiply $g$ and obtain an element of $\St_D(2)$ with the same pattern at the root.
		For example, we can use the following elements with appropriate values of  $k\in\{1,\dots, d-1\}$ 
		$$a_ka_0\cdot a_0[a_k, a_{k/2}]a_0=a_{k/2}a_ka_{k/2}a_0\in \St_D(1),  \qquad a_ka_{-k}[a_{-k},a_0]=a_ka_0a_{-k}a_0 \in \St_D(1)$$
		to obtain the patterns in the statement. 
		
		\item		Choosing $j=i$ in \eqref{eq:gens_D2_mod_XD'} and taking the image modulo $D'$, we obtain the generators $\{a_ja_0D': j=1,\dots d-1\}$ which are exactly the generators of $\St_D(1)D'/D'$. 
		Thus $\St_D(2)D'\geq \St_D(1)$ and Lemma \ref{lem:surjective_system_iff_G2K=G1K} yields that the inverse system $\{\St_D(n)/\Triv_D(n): n\geq 1\}$ is surjective.

	\end{enumerate}
\end{proof}

In particular, by the first part of Theorem \ref{thm:rigid_kernel_surjective_system_direct_prod}, $D$ has non-trivial rigid kernel. 
It only remains to find the structure of $\frac{\St_D(2)\cap D'}{\Triv_D(2)}$ to conclude, using the second item of the Theorem, that the rigid kernel of $D$ is an infinite Cartesian product of finite groups.

\begin{lemma}\label{lem:D2capD'modT2}
	The quotient $\frac{\St_D(2)\cap D'}{\Triv_D(2)}$ is an elementary abelian 2-group of rank $(d-1)(d-3)$. 
	Writing $[j,i]:=a_ja_{j+i}a_ia_0\in\St_D(1)$ for $j,i\in\{1,\dots, d-1\}$, the following patterns of size 2 generate $\frac{\St_D(2)\cap D'}{\Triv_D(2)}$:
	\begin{multline*}
		\delta_0([j/2,-j/2]\cdot[(j-i)/2, -(j-i)/2]\cdot [-i/2,i/2]\cdot[j,i] )%
		\cdot%
		\\%
		\delta_j([j/2,j/2])%
		\delta_{j-i}([(j+i)/2,(j+i)/2])%
		\delta_{-i}([i/2,i/2])%
		\delta_{(j-i)/2}([(j-i)/2,-(j-i)/2])%
		\cdot%
		\\%
		\delta_{\pm j/2}([\mp j/4, \mp j/4])%
		\delta_{\pm i/2}([\pm i/2,\pm i/2])%
		\delta_{\pm (j+i)/2}([(j+i)/4,(j+i)/4])%
		\cdot%
		\\
		\delta_{\pm (j-i)/4}([(j-i)/4,-(j-i)/4])%
		\delta_{\pm (j+i)/4}([(j+i)/4, -(j+i)/4])
	\end{multline*}
	for $j,i\in\{1,\dots,d-1\}, i\neq \pm j$.
\end{lemma}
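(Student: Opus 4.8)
The plan is to realise $\frac{\St_D(2)\cap D'}{\Triv_D(2)}$ as the kernel of an explicit $\ZZ/2\ZZ$-linear map and then read off both its rank and a basis. Since $D$ branches over $D'$, we have $\Triv_D(2)=X\ast(\St_D(1)\cap D')\le X\ast D'\le D'=\ker(\exp)$, where $\exp\colon D\to(\ZZ/2\ZZ)^d$ is the homomorphism of Lemma \ref{lem:comm_is_ker_exp}. Hence $\exp$ descends to a homomorphism $\overline{\exp}\colon \frac{\St_D(2)}{\Triv_D(2)}\to(\ZZ/2\ZZ)^d$, and because $\St_D(2)\cap D'=\St_D(2)\cap\ker(\exp)$, the quotient we want is exactly $\ker(\overline{\exp})$. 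Write $e_k=\exp(a_k)$ for the $k$-th standard basis vector. As $\St_D(2)\le\St_D(1)$ consists of even-length elements (Proposition \ref{prop:D1}(i)), the image of $\overline{\exp}$ lies in the augmentation hyperplane $\ker(\aug)$, of rank $d-1$.

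The first main step is to show $\overline{\exp}$ is onto $\ker(\aug)$. I would evaluate it on the basis $\{\,g_{j,i}:i\neq -j\,\}$ of $\frac{\St_D(2)}{\Triv_D(2)}$ from Lemma \ref{lem:D2modT2}, where $g_{j,i}$ denotes the class represented modulo $X\ast D'$ by \eqref{eq:gens_D2_mod_XD'}. The diagonal generators $g_{j,j}$ (which lie in the basis, since $j\neq -j$ for $d$ odd) are decisive: in \eqref{eq:gens_D2_mod_XD'} with $i=j$ the factor $[j,j]=a_ja_{2j}a_ja_0$ contributes $e_{2j}+e_0$, while $[j/2,-j/2]$, $[0,0]$, $[-j/2,j/2]$ contribute nothing in total modulo $2$, so $\overline{\exp}(g_{j,j})=e_0+e_{2j}$. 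As $j$ runs over $\{1,\dots,d-1\}$ and $d$ is odd, $2j$ runs over all nonzero residues, so the $d-1$ vectors $e_0+e_m$ ($m\neq 0$) all lie in the image; they are linearly independent and lie in $\ker(\aug)$, hence form a basis of it. Thus $\overline{\exp}$ surjects onto $\ker(\aug)$, its image has rank $d-1$, and rank--nullity (these are $\ZZ/2\ZZ$-vector spaces, so rank is dimension) gives
\[
\dim\frac{\St_D(2)\cap D'}{\Triv_D(2)}=(d-2)(d-1)-(d-1)=(d-1)(d-3).
\]

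It remains to produce the displayed basis. Since $\{g_{j,j}\}$ maps onto a basis of the image, each remaining basis vector $g_{j,i}$ with $i\neq\pm j$ can be corrected into the kernel by a unique product of $g_{l,l}$'s: computing
\[
\overline{\exp}(g_{j,i})=e_0+e_i+e_j+e_{i+j}+(e_{j/2}+e_{-j/2})+(e_{(j-i)/2}+e_{-(j-i)/2})+(e_{i/2}+e_{-i/2}),
\]
and letting $S$ be the set of nonzero coordinates on which it is $1$, the containment $\overline{\exp}(g_{j,i})\in\ker(\aug)$ gives $\overline{\exp}(g_{j,i})=\sum_{m\in S}(e_0+e_m)=\sum_{m\in S}\overline{\exp}(g_{m/2,m/2})$, so $g_{j,i}\cdot\prod_{m\in S}g_{m/2,m/2}\in\ker(\overline{\exp})$. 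There are exactly $(d-1)(d-3)$ pairs with $i\neq\pm j$, and these corrected elements are linearly independent (modulo the span of the $g_{l,l}$ they reduce to the independent vectors $g_{j,i}$); matching the rank just computed, they form a basis. Expanding each corrected element into a pattern of size $2$ — reading the first-level sections of every factor off the portraits in Lemma \ref{lem:D2modT2}(iii), collecting them vertex by vertex, and using that the class modulo $\Triv_D(2)=X\ast\Triv_D(1)$ depends only on the sections modulo $\Triv_D(1)$ to choose convenient section representatives — yields the displayed generators.

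The dimension count is essentially immediate once $\overline{\exp}$ is in place, so the main obstacle is the final step: checking that these products collapse to exactly the displayed patterns and that each displayed element genuinely lies in $\St_D(2)\cap D'$, i.e.\ that every section is in $\St_D(1)$ (membership in $\St_D(2)$) and that the sum of all section exponents vanishes in $(\ZZ/2\ZZ)^d$ (membership in $D'$). This is a delicate bookkeeping exercise, owing to the iterated halvings modulo the odd integer $d$ and to the freedom of adjusting section representatives modulo $\Triv_D(1)$; it is precisely here that the divisions such as $(j\pm i)/4$ appear.
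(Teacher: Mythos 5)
Your proposal is correct, and its core is the same as the paper's: $\ZZ/2\ZZ$-linear algebra with exponent-parity maps, a rank--nullity count, and then correcting the off-diagonal generators of Lemma \ref{lem:D2modT2} by diagonal ones to push them into the kernel. The genuine difference is how you factor the computation. The paper works inside $\St_D(1)/(X\ast D')$, of rank $(d-1)^2$, and computes the kernel of the product map $\Aug\times\mathrm{Exp}$ into $(\ZZ/2\ZZ)^{2d}$, whose image it shows to have rank $2d-2$ (spanned by the images of the brackets $[j,j]$ and $[j,-j]$), giving $(d-1)^2-(2d-2)=(d-1)(d-3)$. You instead pass through Lemma \ref{lem:D2modT2} first, so that only the single map $\overline{\exp}$ on $\St_D(2)/\Triv_D(2)$ (rank $(d-1)(d-2)$) remains, with image the augmentation hyperplane $\ker(\aug)$ of rank $d-1$, spanned by $\overline{\exp}(g_{j,j})=e_0+e_{2j}$; this gives $(d-1)(d-2)-(d-1)=(d-1)(d-3)$. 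Your factoring buys two things: the surjectivity onto $\ker(\aug)$ falls out of the diagonal generators alone, and your corrected elements $g_{j,i}\cdot\prod_{m\in S}g_{m/2,m/2}$ automatically lie in $\St_D(2)$, since you correct by elements of $\St_D(2)$ rather than by bare brackets $[l,\pm l]\in\St_D(1)$ as the paper does. The price is at the last step: you describe the passage to the displayed patterns only algorithmically (via the support set $S$ and expansion of sections), whereas the paper records the explicit correction word in brackets for each pair $(j,i)$ before invoking the section-adjustment trick of Lemma \ref{lem:D2modT2}(iii). Since the paper itself defers that final conversion in essentially the same way, I do not count this as a gap; but if you want your write-up to fully substantiate the displayed generators, you should carry out that expansion, including the coordinate coincidences at special pairs (e.g.\ $i=2j$, where $e_{(j-i)/2}+e_{-(j-i)/2}$ cancels against $e_{j/2}+e_{-j/2}$), which is exactly where the $\pm$ signs and quarter-divisions such as $(j\pm i)/4$ in the statement originate.
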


\begin{proof}
	As in the proof of Lemma \ref{lem:D2modT2}, we will work out the quotient $\frac{\St_D(2)(X\ast D')\cap D'}{X\ast D'}$, as it is isomorphic to $\frac{\St_D(2)\cap D'}{\Triv_D(2)}$.
As $\St_D(2)(X\ast D')\cap D'=\St_D(2)(X\ast D')\cap \St_D(1)\cap D'$, we consider $\Aug$ from Lemma \ref{lem:D2modT2} and another map whose kernel is $\St_D(1)\cap D'/(X\ast D')$. 

Consider $g=(g_0, \dots, g_{d-1})\in \St_D(1)$.
By Lemma \ref{lem:comm_is_ker_exp}, $g\in D'$ if and only if each generator $a_j$ of $D$ appears an even number of times in $g$. 
Now, by the definition of the $a_j$, any appearance of $a_j$ in $g$ gives exactly one appearance of $a_j$ in some $g_i$.
So $g\in D'$ if and only if, for every $j\in X$, the number of appearances of $a_j$ across all $g_i$ is even ($\forall j\in X: \sum_{i=0}^{d-1}\exp_j(g_i)\equiv_2 0$, where $\exp_i$ is as in Lemma \ref{lem:exp_def}). 
In other words, $\St_D(1)\cap D'$ is the kernel of the map 
$$\mathrm{Exp}:\St_D(1)\cap (X\ast D) \rightarrow (\ZZ/2\ZZ)^d, \, (g_0, \dots, g_{d-1})\mapsto (\sum_{i=0}^{d-1}\exp_0(g_i), \dots, \sum_{i=0}^{d-1}\exp_{d-1}(g_i)).$$ 
By Lemma \ref{lem:D2modT2},  $\St_D(2)(X\ast D')/(X\ast D')$ is the kernel of the map $\Aug$, so 
$$\frac{(\St_D(2)(X\ast D')\cap \St_D(1)\cap D')}{(X\ast D')}$$
 is the kernel of the map 
%
\begin{align*}
	\Aug\times \mathrm{Exp}: \St_D(1)/(X\ast D') &\rightarrow (\ZZ/2\ZZ)^{2d}\\
\mathbf{g}=(g_0,\dots, g_{d-1})&\mapsto (\Aug(\mathbf{g}), \mathrm{Exp}((\mathbf{g}))\\
&=((\aug(g_0),\dots,\aug(g_{d-1})),\, (\sum_{i=0}^{d-1}\exp_0(g_i), \dots, \sum_{i=0}^{d-1}\exp_{d-1}(g_i)) ).
\end{align*}

The $2d$ parity conditions given in Proposition \ref{prop:D1}\ref{i:D1_equations} imply that the sum of entries in each of the images of $\Aug$ and $\mathrm{Exp}$ must be 0 modulo 2. 
Thus the image of $\Aug\times \mathrm{Exp}$ has dimension at most $2d-2$. 

To improve readability, we write $[j,i]:=a_ja_{j+i}a_ia_0$, $\delta_n$ for the $n$th canonical basis vector in $(\ZZ/2\ZZ)^d$ and ${-}$ for the zero vector in $(\ZZ/2\ZZ)^d$.
Notice that the elements   
\begin{equation}\label{eq:basis_augexp}
\left\{
\begin{aligned}
 (\Aug\times\mathrm{Exp})([j,j])&=(\delta_j\delta_{-j},\; \delta_0\delta_{2j}), \quad j\in\{1,\dots, d-1\},\\
( \Aug\times\mathrm{Exp})([j,-j])&= (\delta_0\delta_{2j},\: \delta_{j}\delta_{-j}), \quad j\in\{1,\dots, d-1\}
\end{aligned}
\right.\end{equation}
are linearly independent, as $d$ is odd.
%
%
Again because $d$ is odd, these elements span the image of $\Aug\times \mathrm{Exp}$:
\begin{align*}
	(\Aug\times\mathrm{Exp})([2j,-2j][j,j][-j,-j])&=(\delta_0\delta_{4j}, {-})\\
	(\Aug\times\mathrm{Exp})([2j,2j][j,-j][-j,j])&=({-}, \delta_0\delta_{4j})
\end{align*}
for $j\in \{1,\dots, d-1\}$.
 So the image of $\Aug\times \mathrm{Exp}$ is indeed of rank $(2d-2)$ and therefore the kernel has rank $(d-1)^2-(2d-2)=(d-1)(d-3)$.
%
%
%

For $j,i\in\{1,\dots, d-1\}, i\neq \pm j$, we have the following $(d-1)(d-3)$ relations:
\begin{align*}
(\Aug\times\mathrm{Exp})([j,i]) &= (\delta_0\delta_j\delta_{j-i}\delta_{-i}, \; \delta_0\delta_j\delta_i\delta_{j+i})\\
&=(\delta_0\delta_{j}, {-})+(\delta_0\delta_{j-i}, {-})+ (\delta_0\delta_{-i}, {-})  + ({-}, \delta_0\delta_{j})+({-},\delta_0\delta_{i})+({-},\delta_0\delta_{j+i}).
\end{align*}
 which yield generators of $\frac{\St_D(2)(X\ast D')\cap D'}{X\ast D'}$, the images modulo $X\ast D'$ of 
\begin{multline*}
[j,i]\cdot [j/2,-j/2][j/4,j/4][-j/4,-j/4]  \cdot [i/2,-i/2][i/4,i/4][-i/4,-i/4]\cdot \\
 [(j-i)/2,(i-j)/2][(j-i)/4,(j-i)/4][(i-j)/4,(i-j)/4]\cdot\\
[j/2,j/2][-j/4,j/4][-j/4,j/4]  \cdot [i/2,i/2][-i/4,i/4][i/4,-i/4]\cdot\\ [(j+i)/2,(j+i)/2][(j+i)/4,(-j-i)/4][(-j-i)/4,(j+i)/4]
\end{multline*}
for $j, i\in\{1,\dots,d-1\}, i\neq \pm j$.
Using the same trick as in the proof of Lemma \ref{lem:D2modT2}, we can multiply each of the above elements by a suitably chosen element of $(X\ast D')$ to ensure that the patterns at each $x\in X$ are in $\St_D(1)$. 
This gives the patterns in the statement.
%
\end{proof}

Notice that for $d=3$, the quotient $\frac{\St_D(2)\cap D'}{\Triv_D(2)}$ is trivial and we have only $(d-1)(d-2)=2$ generators of $\St_D(2)$ modulo $X\ast D'$, namely $a_1a_{2}a_1a_0\cdot a_{2}a_0a_{1}a_0\cdot a_{1}a_0a_{2}a_0 \equiv(a_0a_2,a_0a_2,a_0a_2)$ and  $a_2a_{1}a_2a_0\cdot a_{1}a_0a_{2}a_0\cdot a_{2}a_0a_{1}a_0\equiv (a_0a_1,a_0a_1,a_0a_1)$.
Therefore the rigid kernel is isomorphic to the Klein-4 group, generated by these elements. 
This  agrees  with the conclusions of \cite{bartholdi-s-z:congruence,skipper:constructive}.

\begin{corollary}
	The rigid kernel of $D$ is isomorphic to a Cartesian product of cyclic groups of order 2. If $d=3$, the rank is 2; if $d\geq 5$, the rank is infinite.
\end{corollary}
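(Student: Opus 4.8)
The plan is to assemble the final corollary directly from the two structural theorems and the two lemmas that precede it, so that essentially no new computation is required. The main input is Theorem~\ref{thm:rigid_kernel_surjective_system_direct_prod}, whose hypotheses I would verify in two stages. First, the maximal branching subgroup of $D$ is $D'$, established in Proposition~\ref{prop:rist_D}, and $D/D'$ is an elementary abelian $2$-group of rank $d$ by Lemma~\ref{lem:comm_is_ker_exp}; elementary abelian groups form a class closed under subgroups and direct products in which all short exact sequences split, so the hypothesis on $G/K$ in item~(ii) of the theorem holds. Second, the condition $\St_D(2)D'\geq \St_D(1)$ needed to apply the theorem is exactly the content of Lemma~\ref{lem:D2modT2}(iv), which also records that the inverse system $\{\St_D(n)/\Triv_D(n), r_{m,n}\}$ is surjective.

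With the hypotheses in place, I would invoke item~(ii) of Theorem~\ref{thm:rigid_kernel_surjective_system_direct_prod} to conclude that the rigid kernel is isomorphic to
\[
\Gamma = \frac{\St_D(2)}{\Triv_D(2)}\times \prod_{X^*\setminus\{\epsilon\}}\frac{\St_D(2)\cap D'}{\Triv_D(2)},
\]
\emph{provided} the rigid kernel is infinite. To know that we are in the infinite case, I would first note that the rigid kernel is nontrivial by item~(i) of the theorem together with Lemma~\ref{lem:D2modT2}, since $\St_D(2)\neq\Triv_D(2)$ (the quotient $\St_D(2)/\Triv_D(2)$ has rank $(d-1)(d-2)>0$). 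Then, to separate the two cases, I would read off the ranks of the two factors: by Lemma~\ref{lem:D2modT2}(ii) the factor $\St_D(2)/\Triv_D(2)$ is elementary abelian of rank $(d-1)(d-2)$, and by Lemma~\ref{lem:D2capD'modT2} the factor $\St_D(2)\cap D'/\Triv_D(2)$ is elementary abelian of rank $(d-1)(d-3)$. Since the rigid kernel is built entirely from subquotients of $D/D'\cong(\ZZ/2\ZZ)^d$, it is elementary abelian, hence a Cartesian product of copies of $\ZZ/2\ZZ$, confirming the first sentence of the corollary.

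The case split then falls out of the rank $(d-1)(d-3)$ of the repeated factor. For $d=3$ this rank is $0$, so the product over $X^*\setminus\{\epsilon\}$ contributes nothing and the rigid kernel reduces to the single finite factor of rank $(d-1)(d-2)=2$, i.e.\ the Klein four-group; this also matches the explicit generators computed in the remark following Lemma~\ref{lem:D2capD'modT2}. For $d\geq 5$ the rank $(d-1)(d-3)$ is strictly positive, so each of the infinitely many factors indexed by $X^*\setminus\{\epsilon\}$ is a nontrivial elementary abelian $2$-group, the product is infinite, and in particular the rigid kernel is infinite of countably infinite rank, so $\Gamma$ is indeed the correct description by item~(ii).

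I do not expect a genuine obstacle here, since the two lemmas have already done the real work of computing the relevant ranks; the one point requiring a little care is the logical structure of Theorem~\ref{thm:rigid_kernel_surjective_system_direct_prod}(ii), which describes $\Gamma$ only under the hypothesis that the rigid kernel is infinite. I would therefore take care to establish the infiniteness independently for $d\geq 5$ (from the positivity of $(d-1)(d-3)$) \emph{before} quoting the direct-product formula, and to treat $d=3$ separately where the infinite product collapses. The only mildly delicate observation is that the rigid kernel is elementary abelian, which I would justify by the remark in Section~\ref{subsec:rigidkernels} that the rigid kernel inherits from $(D/D')^{X^*}$ any property preserved by subgroups of Cartesian products.
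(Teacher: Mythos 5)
Your proposal is correct and takes essentially the same route as the paper: the corollary is assembled, exactly as you do it, from Theorem~\ref{thm:rigid_kernel_surjective_system_direct_prod}(ii) (with hypotheses verified via Proposition~\ref{prop:rist_D}, Lemma~\ref{lem:comm_is_ker_exp} and Lemma~\ref{lem:D2modT2}(iv)) together with the ranks $(d-1)(d-2)$ and $(d-1)(d-3)$ computed in Lemmas~\ref{lem:D2modT2} and~\ref{lem:D2capD'modT2}, with the $d=3$ case handled by the collapse of the repeated factor. Your caution about the biconditional phrasing of Theorem~\ref{thm:rigid_kernel_surjective_system_direct_prod}(ii) is well placed, and the independent infiniteness check you call for is available inside the paper's framework: by surjectivity of the inverse system, the rigid kernel surjects onto every $\St_D(n)/\Triv_D(n)$, whose orders are unbounded as soon as $(d-1)(d-3)>0$.
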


 \subsection{Congruence completion and Hausdorff dimension of $D$}

 The Hausdorff dimension of a closed subset of $A^{X^*}$ is a way of measuring its `relative size'. 
 If $A=F\leq \Sym(X)$,  $d=|X|$,  and $G\leq F^{X^*}=\varprojlim_n F\wr\overset{n}{\dots}\wr F $ is a closed subgroup, its Hausdorff dimension is
	 \begin{equation}\label{eq:hdim_def}
  \mathrm{hdim}(G)=\liminf_{n\to \infty}\frac{\log_{|F|}|G:\St_G(n)|}{\log_{|F|}{F\wr\overset{n}{\dots}\wr F}}=\liminf_{n\to\infty} \frac{(d-1)\log_{|F|}|G:\St_G(n)|}{d^n-1}.
	\end{equation}
 
%
%
%
 
 In performing these calculations for $\cl{D}$, the appropriate choice of $F\leq \Sym(X)$ to take is $D(d)$, so $|F|=2d$ in the formulas above.

 \begin{proposition}\label{prop:size_DmodDn}
 	We have, for $n \geq 1$,  
 	\[
 	|D:\St_D(n)| = |\cl{D}:\cl{\St_D(n)}| = 2^{d^{n-1}} \cdot d^{\frac{d^{n}-1}{d-1}}.
 	\]
 	and the Hausdorff dimension of $\cl{D}$ is 
 	\[
 	\mathrm{hdim}(\cl{D})= 1- \frac{1}{d} \cdot \frac{\log 2}{\log 2d}. 
 	\]	
 \end{proposition}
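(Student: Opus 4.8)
The plan is to route the entire computation through the closure $\cl{D}$, exploiting its explicit description as a finitely constrained group (item~(vi) of Theorem~\ref{t:list}), and only at the very end transfer back to $D$. First I would dispose of the equality $[D:\St_D(n)]=[\cl{D}:\cl{\St_D(n)}]$: since $D$ is dense in $\cl{D}$ and the action on the finite tree $X^n$ is continuous, $D$ and $\cl{D}$ have the same image in $\Aut(X^n)$, so $D/\St_D(n)\cong\cl{D}/\cl{\St_D(n)}$ — this is nothing more than the defining presentation $\cl{D}=\varprojlim_n D/\St_D(n)$. Both indices are therefore equal to the number of depth-$n$ portraits realized by $\cl{D}$, and the whole problem becomes one of counting those portraits.

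Next I would set up the count. A depth-$n$ portrait labels each internal vertex (those of depth $0,\dots,n-1$, of which there are $\tfrac{d^n-1}{d-1}$) by a permutation, and because $D/\St_D(1)=D(d)$ each label lies in $D(d)$, a set of $2d$ elements. By item~(vi) the group $\cl{D}$ is finitely constrained by patterns of size $2$, so a depth-$n$ portrait is realized exactly when every size-$2$ window it contains is allowed; these windows are indexed by the vertices of depth $\le n-2$, and any such locally admissible portrait does extend to a full configuration (at each depth-$(n-1)$ vertex one freely chooses the level-$n$ child labels to meet the single outstanding constraint). Writing $p(v)\in\ZZ/2\ZZ$ for the parity of the label at $v$ (with $0$ for a rotation and $1$ for a mirror symmetry), the allowance condition of~(vi) becomes the linear relation $p(v)+\sum_{i=0}^{d-1}p(vi)\equiv_2 0$ at every vertex $v$ of depth $\le n-2$. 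Since each fixed parity is carried by exactly $d$ elements of $D(d)$, the portrait count factors as $d^{(d^n-1)/(d-1)}$ times the number of admissible parity patterns.

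I would then count admissible parity patterns level by level. The root parity is unconstrained, giving $2$ choices; and once the parities on a level $k\le n-2$ are fixed, the relation at each depth-$k$ vertex $v$ prescribes the sum $\sum_i p(vi)$ of its $d$ children's parities, leaving $2^{d-1}$ choices for that tuple. Thus passing from level $k$ to level $k+1$ multiplies the count by $(2^{d-1})^{d^k}=2^{(d-1)d^k}$, and the total is $2\cdot\prod_{k=0}^{n-2}2^{(d-1)d^k}=2^{1+(d-1)(d^{n-1}-1)/(d-1)}=2^{d^{n-1}}$. Multiplying by the $d^{(d^n-1)/(d-1)}$ label choices gives $[\cl{D}:\cl{\St_D(n)}]=2^{d^{n-1}}\,d^{(d^n-1)/(d-1)}$, as claimed.

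Finally, for the Hausdorff dimension I would substitute this index into~\eqref{eq:hdim_def} with $|F|=|D(d)|=2d$. Computing $\log_{2d}[\cl{D}:\cl{\St_D(n)}]=d^{n-1}\tfrac{\log 2}{\log 2d}+\tfrac{d^n-1}{d-1}\tfrac{\log d}{\log 2d}$, multiplying by $(d-1)$, dividing by $d^n-1$, and letting $n\to\infty$ (the expression in fact converges, as $\tfrac{(d-1)d^{n-1}}{d^n-1}\to\tfrac{d-1}{d}$) yields $\bigl(1-\tfrac1d\bigr)\tfrac{\log 2}{\log 2d}+\tfrac{\log d}{\log 2d}$, which simplifies to $1-\tfrac1d\cdot\tfrac{\log 2}{\log 2d}$. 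The main obstacle is the portrait count in the middle two steps: translating the size-$2$ finitely constrained description into the statement that the realized depth-$n$ portraits are precisely the locally admissible ones, and then carrying out the parity bookkeeping cleanly enough to pin down the exponent $d^{n-1}$ on $2$; everything after that is routine arithmetic.
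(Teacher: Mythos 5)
There is a genuine circularity in your argument. Your entire count rests on the claim that a depth-$n$ portrait is realized by $\cl{D}$ \emph{exactly when} every size-$2$ window in it is allowed, i.e.\ on item (vi) of Theorem~\ref{t:list}, the description of $\cl{D}$ as the finitely constrained group defined by the parity patterns. But in the paper that statement is a \emph{consequence} of Proposition~\ref{prop:size_DmodDn}: the corollary following the proposition proves the easy inclusion (every portrait of $\cl{D}$ is locally admissible, because each generator $a_i$ contributes reflections in matching pairs) and then obtains the reverse inclusion precisely by observing that the index $[\cl{D}:\cl{\St_D(n)}]$, \emph{as computed in Proposition~\ref{prop:size_DmodDn}}, equals the number of admissible patterns. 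Corollary~\ref{cor:cong_completion_finitely_constrained} only tells you that $\cl{D}$ is finitely constrained by patterns of \emph{some} bounded size; it does not give you the size-$2$ parity description you use. So, stripped of the circular citation, your argument proves only the upper bound $[\cl{D}:\cl{\St_D(n)}]\leq 2^{d^{n-1}}d^{(d^n-1)/(d-1)}$ (easy direction of (vi) plus your counting); the matching lower bound --- equivalently, the fact that every locally admissible pattern actually arises in $\cl{D}$ --- is exactly the hard content, and it is what the paper's intrinsic computation supplies.

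The paper's proof goes the other way around, entirely inside $D$: it writes $[D:\St_D(n)]$ as $[D:\rst_D(n)]$ divided by $[\St_D(n):\Triv_D(n)]\cdot[\Triv_D(n):\rst_D(n)]$, feeding in $[D:\rst_D(n)]$ from Proposition~\ref{prop:rist_D}, $[\Triv_D(n):\rst_D(n)]=|\St_D(1)\cap D':X\ast D'|^{d^{n-1}}$ from Proposition~\ref{prop:D1}, and $[\St_D(n):\Triv_D(n)]$ from the rigid-kernel machinery (the surjectivity of the inverse system and the direct-product decomposition of Theorem~\ref{thm:rigid_kernel_surjective_system_direct_prod}, with the rank $(d-1)(d-3)$ of $(\St_D(2)\cap D')/\Triv_D(2)$ from Lemma~\ref{lem:D2capD'modT2}). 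For what it is worth, the parts of your proposal that do not depend on (vi) are fine: the identification $[D:\St_D(n)]=[\cl{D}:\cl{\St_D(n)}]$, the count $2^{d^{n-1}}$ of admissible parity patterns times $d^{(d^n-1)/(d-1)}$ label choices, and the Hausdorff-dimension arithmetic all check out. To make your route non-circular you would have to prove independently that $D$ surjects onto the locally admissible depth-$n$ portraits (e.g.\ by an inductive realization argument using self-replication and branching over $D'$), which is essentially the same work the paper does by other means.
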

%
%

\begin{proof}
	The first equality is a general fact about profinite closures. 
	We only need to show   that the sizes of $D/\St_D(n)$ are as claimed. 
	For this, we use that $$|D : \St_D(n)|=\frac{|D : \rst_D(n)|}{|\St_D(n) : \Triv_D(n)|\cdot|\Triv_D(n) : \rst_D(n)|}.$$
	The numerator has already been determined in Proposition \ref{prop:rist_D}. 
	For the denominator, we have, firstly 
	$$|\Triv_D(n) : \rst_D(n)|=\left|X^{n-1}\ast(\St_D(1)\cap D') : X^n\ast D' \right|=\left|\St_D(1)\cap D' : X\ast D'\right|^{d^{n-1}}.$$
	Since $|\St_D(1): X\ast D'|=2^{(d-1)^2}$ by Proposition \ref{prop:D1}\ref{i:D1_index} and 
	$|\St_D(1) : \St_D(1)\cap D'|=2^{d-1}$ by Lemma~\ref{lem:comm_is_ker_exp},
	we have that 
	$|\St_D(1)\cap D' : X\ast D'| = 2^{(d-1)(d-2)} $.
	
	Now, from the proof of Theorem \ref{thm:rigid_kernel_surjective_system_direct_prod} we extract that 
	\begin{align*}
	|\St_D(n) : \Triv_D(n) | & = \left|X^{n-2}\ast\left(\frac{\St_D(2)\cap D'}{\Triv_D(2)}\right)\right| \cdot |\St_D(n-1) : \Triv_D(n-1) |\\
	& = |\St_D(2)\cap D' : \Triv_D(2)|^a \cdot |\St_D(1) : \Triv_D(1)|
	\end{align*}
	 where $a= \sum_{i=0}^{n-2}d^i=(d^{n-1}-1)/(d-1)$. 
	Using Lemma \ref{lem:D2capD'modT2} and that $|\St_D(1) : \Triv_D(1)|=2^{d-1}$, we conclude that 
	$$|\St_D(n) : \Triv_D(n) | = (2^{(d-1)(d-3)})^a \cdot 2^{d-1}=2^{(d-3)(d^{n-1}-1)+ d-1}.$$
	
	Putting all of the above together, we get that $|D:\St_D(n)|=(2^sd^t)/(2^u\cdot 2^v)$
	where $t=(d^{n}-1)/(d-1)$, $s=d^n(d-2)+2$, $u=(d-3)(d^{n-1}-1)+ d-1$, $v=(d-1)(d-2)d^{n-1}$, 
	so $s-u-v=d^{n-1}$ and we get the first part of the statement. 
	
	For the Hausdorff dimension, the result follows from \eqref{eq:hdim_def} and
	\begin{multline*}
		\log_{2d}\left(|\cl{D}:\St_{\cl{D}}(n)|\right)= 
		\log_{2d} \left( |D:\St_D(n)|\right) 
		=\log_{2d}\left(2^{d^{n-1}} \cdot d^{\frac{d^{n}-1}{d-1}}\right)\\
		=\frac{d^n-1}{d-1} -  \frac{d^{n-1}-1}{d-1}\log_{2d}(2).
	\end{multline*}
\end{proof}
 
 \begin{corollary}
 		\begin{enumerate}
 		\item The closure $\cl{D}$ is a finitely constrained group defined by the  patterns of size $2$ that can be described as follows. A pattern of size 2 
 		\[
 		\xymatrix{
 			&& \pi 
 			\ar@{->}_{0}[lld] \ar@{->}^{1}[ld] \ar@{->}_{d-2}[rd] \ar@{->}^{d-1}[rrd]&& \\
 			\pi_0 & \pi_1 & \dots & \pi_{d-2} & \pi_{d-1} 
 		}
 		\]
 		is an allowed pattern if and only if the permutation $\pi\pi_0\pi_1\dots \pi_{d-2}\pi_{d-1} \in D(d)$ is a rotation in $D(d)$ (the number of mirror symmetries among $\pi,\pi_0,\dots,\pi_{d-1}$ is even).

 		\item The closure $\cl{D}$ is a regular branch group branching over $\cl{\St_D(1)}=\St_{\cl{D}}(1)$, the stabiliser of level 1. 
 		Moreover, 
 		\[
 		[X\ast D :\St_D(1)] = 2^{2d-1} \qqand [X\ast\cl{D} :\cl{\St_D(1)}] =2. 
 		\]
 		so we recover, using Theorem \ref{t:criterion}, the known fact that $D$ has non-trivial rigid kernel. 
 	\end{enumerate}
 	
 \end{corollary}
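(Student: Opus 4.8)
The plan is to identify $\cl{D}$ with the finitely constrained group $\Gamma\le D(d)^{X^*}$ defined by the size-$2$ patterns in the statement, and then to read off part (ii) from the structure theory of finitely constrained groups. That $\cl{D}$ is \emph{some} finitely constrained group is immediate from Corollary \ref{cor:cong_completion_finitely_constrained}, since $D$ is self-replicating, level-transitive and regular branch by Theorem \ref{t:list}(i); the content of (i) is to pin down the defining patterns. The strategy is the standard two-step one: first show $\cl{D}\le\Gamma$, and then show that the two closed groups have finite quotients of equal size, forcing equality.

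For the inclusion I would use the sign homomorphism $\mathrm{sgn}\colon D(d)\to\ZZ/2\ZZ$ sending rotations to $0$ and mirror symmetries to $1$, whose kernel is the rotation subgroup. Fixing $g\in D$ and a vertex $v$, the size-$2$ pattern at $v$ is allowed precisely when $\mathrm{sgn}(g(v))+\sum_{x\in X}\mathrm{sgn}(g(vx))\equiv 0\pmod 2$. I would verify this at the root for a generator-word $g=a_{i_m}\cdots a_{i_1}$ using the element-decomposition lemma: each occurrence $a_{i_j}$ contributes one reflection factor $\mu_{i_j}$ to the root permutation $g(\epsilon)$ and one reflection factor (the root of $\delta_{p_j}(a_{i_j})$) to exactly one first-level section $g(x)$. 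Hence the total number of reflection factors across $g(\epsilon),g(0),\dots,g(d-1)$ is $2m$; since at each vertex the parity of the number of reflection factors equals $\mathrm{sgn}$ of that vertex's permutation, the number of mirror symmetries among $g(\epsilon),g(0),\dots,g(d-1)$ is even and the root pattern is allowed. As $D$ is self-similar, every section $g_v\in D$, so the same computation applied to $g_v$ handles the pattern at every $v$. Thus $D\le\Gamma$, and since $\Gamma$ is closed, $\cl{D}\le\Gamma$.

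For equality it suffices to match $[\Gamma:\St_\Gamma(n)]=[\cl{D}:\St_{\cl{D}}(n)]$ for all $n$, as $\cl{D}\le\Gamma$ are both closed. I would count $|\Gamma/\St_\Gamma(n)|$ as the number of $D(d)$-decorations of $X^{<n}$ realized in $\Gamma$; by the finite-type property this equals the number satisfying every allowed-pattern constraint with apex in $X^{<n-1}$, since any such partial decoration extends (one reflection-parity constraint on the new children of each level-$(n-1)$ vertex can always be met). Each constraint is the single $\ZZ/2\ZZ$-linear condition $\mathrm{sgn}(g(v))+\sum_x\mathrm{sgn}(g(vx))=0$ on the reflection parities, while the element within each parity class is free ($d$ choices per vertex), so the count is $d^{|X^{<n}|}$ times the number of solutions of the linear system. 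The technical heart is linear independence of these $|X^{<n-1}|$ constraints: writing a putative dependence $\sum_{v\in T}(\text{constraint}_v)=0$, the coefficient of $\mathrm{sgn}(g(w))$ is $[w\in T]+[\mathrm{parent}(w)\in T]\bmod 2$, and a top-down induction from the root forces $T=\emptyset$. Hence the solution space has dimension $|X^{<n}|-|X^{<n-1}|=d^{n-1}$, giving $|\Gamma/\St_\Gamma(n)|=2^{d^{n-1}}\cdot d^{(d^n-1)/(d-1)}$, which matches Proposition \ref{prop:size_DmodDn}. Therefore $\cl{D}=\Gamma$, proving (i). I expect this equality, rather than the inclusion, to be the main obstacle: the delicate points are the extendability of the counted partial decorations and the linear independence of the parity constraints, which together make the count exact.

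Part (ii) then follows from the structure of finitely constrained groups. Since $\cl{D}=\Gamma$ is defined by patterns of size $2$, Theorem \ref{thm:ps_fincons_regbranch} gives that it is regular branch over $\Triv_\Gamma(1)=\St_\Gamma(1)$, and $\St_\Gamma(1)=\St_{\cl{D}}(1)=\cl{\St_D(1)}$ because $\St_D(1)$ is open in $D$, hence dense in $\St_{\cl{D}}(1)$. The index $[X\ast D:\St_D(1)]=2^{2d-1}$ is Proposition \ref{prop:D1}(iv). For the closure, an element $(g_0,\dots,g_{d-1})\in X\ast\cl{D}$ lies in $\St_{\cl{D}}(1)$ exactly when it lies in $\Gamma$; since each $g_x\in\cl{D}$ already satisfies all constraints below level $1$, the only remaining condition is the root constraint $\sum_x\mathrm{sgn}(g_x(\epsilon))\equiv 0\pmod 2$, which is a single surjective $\ZZ/2\ZZ$-valued homomorphism on $X\ast\cl{D}$ (surjective since $a_0$ has reflection root). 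Hence $[X\ast\cl{D}:\cl{\St_D(1)}]=2$. As $2^{2d-1}\ne 2$, condition (iv) of Theorem \ref{t:criterion} fails, so $D$ has nontrivial rigid kernel.
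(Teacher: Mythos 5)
Your proposal is correct and follows essentially the same route as the paper: inclusion $\cl{D}\le\Gamma$ via the reflection-parity of generators plus self-similarity, equality by matching the number of allowed patterns against the index count of Proposition \ref{prop:size_DmodDn}, and part (ii) from Theorem \ref{thm:ps_fincons_regbranch} together with the single index-$2$ root constraint. The only difference is that you spell out the pattern count (linear independence of the parity constraints and extendability of partial decorations) that the paper asserts implicitly when it says the index ``is exactly the number of allowed patterns.''
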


\begin{proof}
	\begin{enumerate}
		\item First note that all elements of $D/\St_D(2)$ satisfy the constraints in the statement, because each generator $a_i$ of $D$ contributes a single reflection $\mu_i$ to the pattern at the root and the same reflection $\mu_i$ to the pattern at exactly one vertex of level 1.
		Since $D$ is self-similar, all patterns in $\cl{D}$ of size $n$ must also satisfy these constraints. 
		The fact that all allowed patterns in the statement of size $n$ arise in $\cl{D}$ follows from the fact that $|\cl{D} : \St_{\cl{D}}(n)|$ as calculated in Proposition \ref{prop:size_DmodDn} is exactly the number of allowed patterns. 
		
		\item The first part of the statement follows from Theorem \ref{thm:ps_fincons_regbranch} and the fact that $\cl{D}$ is finitely constrained, defined by patterns of size 2. 
		For the second part, the first equality is proved in Proposition \ref{prop:D1} while the second follows from the constraint of index 2 in the allowed patterns of size 2. 
	\end{enumerate}
\end{proof}

\bibliographystyle{alpha}
\bibliography{rigid-kernel}

\def\cprime{$'$}
\begin{thebibliography}{FAGUA17}

\bibitem[BSZ12]{bartholdi-s-z:congruence}
Laurent Bartholdi, Olivier Siegenthaler, and Pavel Zalesskii.
\newblock The congruence subgroup problem for branch groups.
\newblock {\em Israel J. Math.}, 187:419--450, 2012.

\bibitem[CRW17]{CRW-Part2}
Pierre-Emmanuel Caprace, Colin~D. Reid, and George~A. Willis.
\newblock Locally normal subgroups of totally disconnected groups. {P}art
  {I}{I}: {C}ompactly generated simple groups.
\newblock {\em Forum Math. Sigma}, 5:e12, 2017.

\bibitem[CSCF{\v{S}}13]{cech-c-f-s:cellautomata}
Tullio Ceccherini-Silberstein, Michel Coornaert, Francesca Fiorenzi, and Zoran
  {\v{S}}uni{\'c}.
\newblock Cellular automata between sofic tree shifts.
\newblock {\em Theoretical Computer Science}, 506:79 -- 101, 2013.

\bibitem[FAGUA17]{agu_ggs}
Gustavo~A. Fern{\'a}ndez-Alcober, Alejandra Garrido, and Jone Uria-Albizuri.
\newblock On the congruence subgroup property for {GGS}-groups.
\newblock {\em Proc. Am. Math. Soc.}, 145(8):3311--3322, 2017.

\bibitem[Gar16]{garridoCSP}
Alejandra Garrido.
\newblock On the congruence subgroup problem for branch groups.
\newblock {\em Israel J. Math.}, 216(1):1--13, 2016.

\bibitem[GN{\v S}06]{grigorchuk-n-s:oberwolfach2}
Rostislav Grigorchuk, Volodymyr Nekrashevych, and Zoran {\v S}uni\'c.
\newblock Hanoi towers group on 3 pegs and its pro-finite closure.
\newblock {\em Oberwolfach Reports}, 25:15--17, 2006.

\bibitem[Gri80]{grig_burnside}
Rostislav~I. Grigorchuk.
\newblock On {B}urnside's problem on periodic groups.
\newblock {\em Funktsional. Anal. i Prilozhen.}, 14(1):53--54, 1980.

\bibitem[Gri85]{grig_intermediate}
R.~I. Grigorchuk.
\newblock Degrees of growth of finitely generated groups, and the theory of
  invariant means.
\newblock {\em Mathematics of the USSR-Izvestiya}, 25(2):259, 1985.

\bibitem[Gri00]{GrigNewHorizons}
Rostislav~I. Grigorchuk.
\newblock Just infinite branch groups.
\newblock In {\em New horizons in pro-{$p$} groups}, volume 184 of {\em Progr.
  Math.}, pages 121--179. Birkh\"auser Boston, Boston, MA, 2000.

\bibitem[G{\v{S}}06]{grigorchuk-s:hanoi-cr}
Rostislav Grigorchuk and Zoran {\v{S}}uni{\'k}.
\newblock Asymptotic aspects of {S}chreier graphs and {H}anoi {T}owers groups.
\newblock {\em C. R. Math. Acad. Sci. Paris}, 342(8):545--550, 2006.

\bibitem[G{\v{S}}07]{grigorchuk-s:standrews}
Rostislav Grigorchuk and Zoran {\v{S}}uni{\'c}.
\newblock Self-similarity and branching in group theory.
\newblock In {\em Groups {S}t. {A}ndrews 2005. {V}ol. 1}, volume 339 of {\em
  London Math. Soc. Lecture Note Ser.}, pages 36--95. Cambridge Univ. Press,
  Cambridge, 2007.

\bibitem[Neu86]{neumann:pride}
Peter~M. Neumann.
\newblock Some questions of {E}djvet and {P}ride about infinite groups.
\newblock {\em Illinois J. Math.}, 30(2):301--316, 1986.

\bibitem[Per07]{pervova:completions}
Ekaterina Pervova.
\newblock Profinite completions of some groups acting on trees.
\newblock {\em J. Algebra}, 310(2):858--879, 2007.

\bibitem[P{\v{S}}19]{penland-sunic:kitchens}
Andrew Penland and Zoran {\v{S}}uni{\'c}.
\newblock A language hierarchy and kitchens-type theorem for self-similar
  groups.
\newblock {\em Journal of Algebra}, 537:173 -- 196, 2019.

\bibitem[Ski19]{skipper:constructive}
Rachel Skipper.
\newblock A constructive proof that the {H}anoi towers group has non-trivial
  rigid kernel.
\newblock {\em Topology Proc.}, 53:1--14, 2019.

\bibitem[Ski20]{skipper:congruence}
Rachel Skipper.
\newblock The congruence subgroup problem for a family of branch groups.
\newblock {\em Internat. J. Algebra Comput.}, 30(2):397--418, 2020.

\bibitem[{\v{S}}un07]{sunic:pibonacci}
Zoran {\v{S}}uni{\'c}.
\newblock Hausdorff dimension in a family of self-similar groups.
\newblock {\em Geom. Dedicata}, 124:213--236, 2007.

\bibitem[Wil71]{wilsonJIclassification}
J.~S. Wilson.
\newblock Groups with every proper quotient finite.
\newblock {\em Proc. Cambridge Philos. Soc.}, 69:373--391, 1971.

\bibitem[Wil00]{wilsonNewhorizons}
John~S. Wilson.
\newblock On just infinite abstract and profinite groups.
\newblock In Marcus Sautoy, Dan Segal, and Aner Shalev, editors, {\em New
  Horizons in pro-p Groups}, volume 184 of {\em Progress in Mathematics}, pages
  181--203. Birkhäuser Boston, 2000.

\bibitem[Wil04]{Wilson_nonuniform}
John~S. Wilson.
\newblock On exponential growth and uniformly exponential growth for groups.
\newblock {\em Invent. Math.}, 155(2):287--303, 2004.

\end{thebibliography}

\end{document}